\newif\ifdraft
\newcounter{mnotecount}[section]
\renewcommand{\themnotecount}{\thesection.\arabic{mnotecount}}
\ifdraft\newcommand{\mnote}[1]{\protect{\stepcounter{mnotecount}}${\raisebox{0.5\baselineskip}[0pt]{\makebox[0pt][c]{\tiny\em{\red{$\bullet$\themnotecount}}}}}$\marginpar{\raggedright\tiny\em $\!\!\!\!\!\!\,\bullet$\themnotecount: #1}\ignorespaces}\else\newcommand{\mnote}[1]{}\fi
\newcommand{\pct}{\mathrm{pct}}
\newcommand{\inv}{\mathrm{inv}}
\definecolor{darkgreen}{rgb}{0,0.6,0}
\definecolor{darkred}{rgb}{0.7,0,0}
\definecolor{darkblue}{rgb}{0,.1,.6}
\definecolor{darkgray}{rgb}{0.3,.3,.3}
\renewcommand\sout{\bgroup\markoverwith
{\textcolor{red}{\rule[0.7ex]{3pt}{1.4pt}}}\ULon}
\long\def\sout#1{}
\newcommand\red[1]{{\color{red}{#1}}}
\newcommand\jonathan[1]{\ifdraft\textcolor{darkblue}{#1}\else#1\fi}
\def\si{{\sigma}}
\let\epsilon\varepsilon
\def\ep{{\varepsilon}}         
\def\Tau{\mathcal{T}}         
\let\theta\vartheta
\def\phi{{\varphi}}
\def\Sigmairr{\Sigma_{\mathrm{irr}}}
\newcommand\olM{\overline{M}}
\newcommand\olg{\overline{g}}
\newcommand\geucl{g_{\mathrm{eucl}}}
\let\na\nabla     
\let\ti\tilde
\let\witi\widetilde
\let\wihat\widehat
\newcommand{\lgen}{\mathopen{\prec}}
\newcommand{\rgen}{\mathclose{\succ}}
\newcommand\scal{\operatorname{scal}}
\newcommand\grad{\operatorname{grad}}
\newcommand\tr{\mathop{\mathrm{tr}}}
\newcommand\id{\mathop{\mathrm{id}}}
\newcommand\Cl{\mathop{\mathrm{Cl}}}
\newcommand\CCl{\mathop{\mathbb{C}\mathrm{l}}}
\newcommand\rank{\mathop{\mathrm{rank}}}
\newcommand\image{\mathop{\mathrm{im}}}
\DeclareMathOperator{\spann}{span}
\DeclareMathOperator{\KO}{KO}
\newcommand\<{\langle}
\renewcommand\>{\rangle} 
\newcommand{\del}{\partial}
\newcommand{\quotientslash}{\Big/}
\newcommand{\quotientbslash}{\big\backslash}
\newlength{\lowerquotdepth}
\newlength{\smalllowerquotdepth}
\newcommand{\quotientspace}[2]{#1\lower\lowerquotdepth\hbox{$\quotientslash$}\lower2\lowerquotdepth\hbox{$#2$}}
\newcommand{\leftsmallquotientspace}[2]{\lower2\smalllowerquotdepth\hbox{$#1$}\lower\smalllowerquotdepth\hbox{$\quotientbslash$}#2}
\newcommand\ie{i.\thinspace e.,\ \ignorespaces}
\newcommand\eg{e.\thinspace g.,\ \ignorespaces}
\newcommand{\arxiv}[1]{Preprint \href{https://arxiv.org/abs/#1}{ArXiv:#1} \href{https://arxiv.org/pdf/#1.pdf}{(pdf)}} 
\newcommand*\bigcdot{\mathpalette\bigcdot@{.7}}
\newcommand*\Bigcdot{\mathpalette\bigcdot@{.85}}
\newcommand*\bigcdot@[2]{\mathbin{\vcenter{\hbox{\scalebox{#2}{$\m@th#1\bullet$}}}}}
\newcommand{\Dirac}{\slashed{D}}
\newcommand{\olDirac}{\overline{\Dirac}}
\newcommand{\lto}{\longrightarrow}
\newcommand{\lmapsto}{\longmapsto}
\newcommand{\Met}{\mathcal{R}}
\newcommand{\Psc}{\mathcal{R}^>}
\newcommand{\Nnsc}{\mathcal{R}^{\geq}}
\newcommand{\Ini}{\mathcal{I}}
\newcommand{\Dec}{\mathcal{I}^{\geq}}
\newcommand{\Decstr}{\mathcal{I}^>}
\newcommand{\Susp}{S}
\newcommand{\Torus}[1][ ]{\ZZ^{#1}\backslash\RR^{#1}}
\newcommand{\pt}{\{\bullet\}}
\def\Mod{\mathop{\text{$\mathcal{M}$\kern-1.7pt\textit{od}}}\nolimits}
\DeclareMathOperator{\sign}{sign}
\DeclareMathOperator{\Fred}{Fred}
\DeclareMathOperator{\adiff}{\alpha-diff}
\DeclareMathOperator{\oladiff}{\ol\alpha-diff}
\DeclareMathOperator{\bdiff}{\beta-diff}
\DeclareMathOperator{\olbdiff}{\ol\beta-diff}
\DeclareMathOperator{\Ein}{Ein}
\newcommand\datver[1]{\def\datverp
{\par\boxed{\boxed{\text{Version: #1; Run: \today}}}}}\datver{0.1}
\newcommand{\definedas}{\coloneqq}
\newcommand{\dom}{\operatorname{dom}}
\newcommand{\CC}{\mathbb C}
\newcommand{\FF}{\mathbb F}
\newcommand{\HH}{\mathbb{H}}
\newcommand{\NN}{\mathbb N}
\newcommand{\RR}{\mathbb R}
\newcommand{\ZZ}{\mathbb Z}
\newcommand{\QQ}{\mathbb Q}
\newcommand{\Ztwo}{\mathbb{Z}/2\mathbb{Z}}
\newcommand{\ind}{\operatorname{ind}}
\newcommand{\End}{\operatorname{End}}
\newcommand{\GL}{{\operatorname{GL}}}
\newcommand{\SL}{{\operatorname{SL}}}
\newcommand{\SO}{{\operatorname{SO}}}
\newcommand{\SU}{{\operatorname{SU}}}
\newcommand{\Sp}{{\operatorname{Sp}}}
\let\O\relax
\newcommand{\O}{{\operatorname{O}}}
\newcommand{\Spin}{{\operatorname{Spin}}}
\newcommand{\Isom}{{\operatorname{Isom}}}
\newcommand{\Isomaff}{{\operatorname{Isom}\nolimits_{\mathrm{aff}}}}
\newcommand{\Aut}{\operatorname{Aut}}
\newcommand{\Diff}{\operatorname{Diff}}
\renewcommand{\div}{\operatorname{div}}
\let\ol\overline
\newcommand{\maF}{\mathcal F}
\newcommand{\maG}{\mathcal G}
\newcommand{\maH}{\mathcal H}
\newcommand{\maL}{\mathcal L}
\newcommand{\maP}{\mathcal P}
\newcommand{\maT}{\mathcal T}
\newcommand{\platz}{\,\cdot\,}
\newtheorem{theorem}{Theorem}[section]
\newtheorem{proposition}[theorem]{Proposition}
\newtheorem{corollary}[theorem]{Corollary}
\newtheorem{conjecture}[theorem]{Conjecture}
\newtheorem{lemma}[theorem]{Lemma}
\newtheorem{construction}[theorem]{Construction}
\newcommand{\Description}[1]{\noindent\textit{Description #1.}}
\theoremstyle{definition}
\newtheorem{definition}[theorem]{Definition}
\theoremstyle{remark}
\newtheorem{remark}[theorem]{Remark}
\newtheorem{remarks}[theorem]{Remarks}
\newtheorem{example}[theorem]{Example}
\newtheorem{examples}[theorem]{Examples}
\newtheorem*{summary*}{Summary}
\author[B. Ammann]{Bernd Ammann} \address{B. Ammann, Fakult\"at f\"ur
  Mathematik, Universit\"at Regensburg, 93040 Regensburg, Germany}
\email{bernd.ammann@mathematik.uni-regensburg.de}
\author[J. Glöckle]{Jonathan Glöckle} \address{J. Glöckle, Fakult\"at f\"ur
  Mathematik, Universit\"at Regensburg, 93040 Regensburg, Germany}
\email{jonathan.gloeckle@mathematik.uni-regensburg.de}
\thanks{The authors and this article were supported by the SFB 1085 (Higher Invariants) in Regensburg, Germany, funded by the  DFG (German Science Foundation).}
\date\today
\begin{document}

\title{Dominant energy condition and spinors on Lorentzian manifolds}

\begin{abstract}
Let $(\olM,\olg)$ be a time- and space-oriented Lorentzian spin manifold, and  let $M$ be a compact spacelike 
hypersurface of $\olM$ with induced Riemannian metric $g$ and second fundamental form $K$. If $(\olM,\olg)$ satisfies the dominant energy condition
in a strict sense, then the Dirac--Witten operator of $M\subseteq \olM$ is an invertible, self-adjoint Fredholm operator. This allows us to use index theoretical methods in order to detect 
non-trivial homotopy groups in the space of initial on $M$ satisfying the dominant energy condition in a strict sense. The central tool will be a Lorentzian analogue of 
Hitchin's $\alpha$-invariant. In case that the dominant energy condition only holds in a weak sense, the Dirac--Witten operator may be non-invertible, and we will study the kernel of this operator in this case.
We will show that the kernel may only be non-trivial if $\pi_1(M)$ is virtually solvable of derived length at most~$2$. This allows to extend the index theoretical
methods to spaces of initial data, satisfying the dominant energy condition in the weak sense.
We will show further  that a spinor $\phi$ is in the kernel of the Dirac--Witten operator on $(M,g,K)$ if and only if $(M,g,K,\phi)$ admits an extension to a Lorentzian manifold $(\ol N,\ol h)$ with parallel spinor $\bar\phi$ such that $M$ is a Cauchy hypersurface of $(\ol N,\ol h)$, such that $g$ and $K$ are the induced metric and second fundamental form of $M$, respectively, and $\phi$ is the restriction of $\bar\phi$ to $M$.
\end{abstract}

\maketitle
\tableofcontents
\setcounter{page}{1}

\section{Introduction}

\subsection{Motivation for the dominant energy condition}
The research about the space of metrics with positive (resp.\ non-negative) scalar curvature experienced an overwhelming success during the last decades. In particular, index theoretical methods turned out to be very powerful. This established interesting bridges between topology and geometry, and these bridges provide a strong motivation to study these spaces.

Further motivation to study metrics of positive (resp.\ non-negative) scalar curvature comes from general relativity. In many cases the interest in such metrics goes back to the dominant energy condition. Amazingly to us it seems that
the dominant energy condition is less understood than positive scalar curvature. In this article we aim to establish several results in the dominant energy context which were known so far only for positive scalar curvature metrics.

In fact, the dominant energy condition (DEC) can be seen from two points of view. First, we may see it as a condition on the Ricci tensor of a Lorentzian manifold $(\ol M,\ol g)$, see Definition~\ref{def.dec}.
The physical interpretation of the dominant energy condition is that the rest mass density is non-negative when measured at any point of space-time and in any (causal) direction. 

Second, we can choose a spacelike hypersurface $M$ in $\ol M$. Let $g$ and $K$ be the induced metric and second fundamental form. The pair $(g,K)$ is called an initial data set.  Equation~\eqref{eq:CE} then determines
the energy density $\rho\in C^\infty(M)$ and the momentum density $j\in \Omega^1(M)$. The dominant energy condition from Definition~\ref{def.dec} implies $\rho\geq \|j\|$ which is by definition the initial data version of the dominant energy condition, see Definition~\ref{def.dec.ids}.

In the special case that  $M$ is totally geodesic (\ie the second fundamental form~$K$ vanishes), the dominant energy condition holds
if and only if the scalar curvature of $g$ is non-negative. Thus from this point of view non-negative (resp.\ positive) scalar curvature is a special case of the (strict) dominant energy condition for initial data sets $(g,K)$.

Initial data sets satisfying the dominant energy condition allow many interesting mathematical conclusions. We will mention two of them: the positive mass theorem and black hole boundaries.

The spacetime version of the positive mass theorem -- also called positive energy theorem -- states that the ADM mass $E$ and the ADM momentum vector $P$ of an asymptotically flat initial data set with $\rho\geq \|j\|$ satisfy $E\geq \|P\|$. The theorem was proven for spin manifolds by Witten \cite{witten:81} by defining and studying a Dirac type operator, the so-called \emph{Dirac--Witten operator of $(M,g,K)$}, which will also play a crucial role in our article.

The second type of result we want to mention shows another interesting bridge to positive scalar curvature. We assume, that our asymptotically flat initial data set $(g,K)$ is defined on a manifold with compact boundary $S$, and we assume that this boundary is a marginally outer trapper surface with respect to $(g,K)$.
For initial data sets $(g,K)$ satisfying the dominant energy condition, it was shown by Galloway and coauthors
\cite{cai.galloway:MR1846368,
galloway.schoen:MR2238889,
galloway:MR2411473,
galloway:MR3768955, 
andersson.dahl.galloway.pollack:MR3878142
}
that the induced Riemannian metric on~$S$ is conformal to a metric of non-negative scalar curvature, and that this metric can be deformed to a metric of positive scalar curvature.




\subsection{Results of this article}
In this article, we contribute to the idea to describe a Lorentzian manifold from the perspective of initial data sets. Instead of considering the Lorentzian manifold~$\ol M$ itself, we consider the associated \emph{initial data set} $(g,K)$
where $g$ is the induced Riemannian metric and $K$ the second fundamental form on some spacelike hypersurface $M$.
Our main interest is the topology of the space $\Dec(M)$ of initial data sets satisfying DEC\footnote{see Definition~\ref{def.dec.ids}} on a closed spin manifold $M$. In particular, we will determine non-trivial homotopy groups in $\Dec(M)$, see \eg Corollary~\ref{cor:Wdec}.
We will use index theory and a Lorentzian version of the index difference
in order to detect these homotopy groups. To construct non-trivial elements
in homotopy groups $\pi_k(\Dec(M))$ we will combine known methods for constructing non-trivial homotopy groups in the space $\Psc(M)$ of positive scalar curvature
metrics with a suspension construction explained in Subsection~\ref{subsec.ini.data.pairs}.

In order to adapt the methods from the space of positive scalar curvature metrics to the space $\Dec(M)$ several steps have to be taken. First, recall from above that DEC is the space-time version of non-negative scalar curvature. Thus the first task is to find non-trivial homotopy groups in the space $\Nnsc(M)$ of metrics with non-negative scalar curvature on $M$. Here important progress was achieved by Schick and Wraith \cite{schick.wraith.2016}.

Second, one needs an initial data version of the Dirac operator. As already mentioned above, such an operator, the \emph{Dirac--Witten operator of $(M,g,K)$}, was introduced by Witten \cite{witten:81,parker.taubes:82} in order to prove the spacetime positive mass theorem on spin manifolds. This operator was elaborated in more details \eg in \cite{hijazi.zhang:03}, and it is a modification of the Dirac operator on $(M,g)$ by a term depending on $K$.
All authors named above considered the Dirac--Witten operator acting on complex spinors. In index theory typically more refined information can be obtained by considering Clifford-graded versions of the Dirac operator, see \eg \cite{lawson.michelsohn:89}. A Clifford-graded version of the Dirac--Witten operator was introduced by the second author of this article
in~\cite{gloeckle:p2019}. In particular, this Clifford-graded version
allowed him to define a Lorentzian index
difference, see Subsection~\ref{subsec.index.diff.idp},
which was a key tool to detect non-trivial homotopy groups in
the space $\Decstr(M)$  of initial data sets strictly satisfying DEC. 
Here  $\Decstr(M)$ is the space-time companion of positive scalar curvature metrics, see Definition~\ref{def.dec.ids}. Section~\ref{sec.homotopy.groups}
of our present article is to a large extent a summary of~\cite{gloeckle:p2019}.

The goal of the present article is to detect non-trivial homotopy groups in~$\Dec(M)$. This requires results analogous to the passage from positive scalar curvature to non-negative scalar curvature. On a closed spin manifold of non-negative scalar curvature every harmonic spinor is parallel, and this implies special holonomy, a subject which is well-studied. In our space-time setting, we will
have to study the kernel of the Dirac--Witten operator on initial data sets $(g,K)\in \Dec(M)$ for some closed spin manifold $M$. This will lead us
to the definition of \emph{initial data triples} $(g,K,\phi)$ where $\phi$ is an imaginary $W$-Killing spinor, see Definition~\ref{def.initial.data.triples} and Equation~\eqref{gen.imag.killing}.
Initial data triples play the space-time role of a Ricci-flat metric with a nowhere vanishing parallel spinor. The main goal of Section~\ref{sec.kernel.DWo} is to characterize -- as far as possible -- manifolds carrying initial data triples. In particular, we will show that the existence of an initial data triple on a closed spin manifold $M$ implies that
$\pi_1(M)$ is virtually solvable of derived length at most~$2$, see in particular Corollary~\ref{cor:obstr.lightlike.id-triple}. We immediately obtain non-trivial homotopy groups $\pi_k(\Dec(M))$, provided that $\pi_1(M)$ is not of that kind, see Corollary~\ref{cor:Wdec}.

Our goal for theses investigations can be described by the following diagram, which commutes up to homotopy, whenever all maps exist.
Here $\adiff$ is the classical index difference, $\oladiff$ is ``essentially'' the Lorentzian index difference from\cite{gloeckle:p2019}, see  Subsection~\ref{subsec.index.diff.idp}.

\begin{center}
  \def\yalpha{-.9}
  \def\ymid{-1.5}
  \def\ybeta{-1.9}
  \def\ybelow{-3.0}
  \def\xlinks{0}
  \def\xriem{1}
  \def\xmitte{3}
  \def\xlor{5.1}
  \def\xrechts{6}
\begin{tikzpicture}[scale={.9}]
    \node (riemp) at (\xlinks,0) {$\Psc(M)$};
    \node (riemnn) at (\xlinks,\ybelow) {$\Nnsc(M)$};
    \node (idsp) at (\xrechts,0) {$\Omega\bigl(\Decstr(M)\bigr)$};
    \node (idsnn) at (\xrechts,\ybelow) {$\Omega\bigl(\Dec(M)\bigr)$};
    \node (KO) at (\xmitte,\ymid) {$\Omega^{\infty+n+1}\KO$};
    \draw[->] (riemp) to node[above] {} (idsp);
    \draw[->] (riemp) to node {} (riemnn);
    \draw[->] (riemnn) to node[above] {} (idsnn);
    \draw[->] (idsp) to node {} (idsnn);
    \draw[->] (riemp) to node[below,left] {} (KO);
    \node (riemalphapfeil) at (\xriem,\yalpha) {$\adiff$};
    \draw[->] (idsp) to node[below,right] {}  (KO);
    \node (loralphapfeil) at (\xlor,\yalpha) {$\oladiff$};
    \draw[->,dashed] (riemnn) to node[above,left] {\;\;} (KO);
    \node (riembetapfeil) at (\xriem,\ybeta) {$\bdiff$};
    \draw[->,dashed,red] (idsnn) to node[above,right] {}  (KO);
   \node (lorbetapfeil) at (\xlor,\ybeta) {\red{$\olbdiff$}};
  \end{tikzpicture}
\end{center}
The dotted map $\bdiff$ is due to Schick and Wraith \cite{schick.wraith.2016}, its dottedness shall indicate here, that -- in general -- it is not defined
on all of $\Nnsc(M)$. The domain $\dom(\bdiff)$ includes the union of all path components in $\Nnsc(M)$ that contain metrics in $\Psc(M)$. All such metrics have an invertible Dirac operator. If $M$ does not admit a Ricci-flat metric with a parallel spinor, see \eg Examples~\ref{example.obstr.structured}, then we can achieve $\dom(\bdiff)=\Nnsc(M)$.

In the present article we discuss the existence of the map $\olbdiff$.
In the special case that initial data triples are topologically obstructed,
see Examples~\ref{examples.all.obstructed} and Corollary~\ref{cor:obstr.lightlike.id-triple} all pairs $(g,K)\in \Dec(M)$ have an invertible Dirac--Witten operator, and we obtain a map $\olbdiff$ defined on all of $\Omega\bigl(\Dec(M)\bigr)$. This includes all closed manifolds $M$ for which $\pi_1(M)$ is not virtually solvable of derived length at most~$2$. We consider it as plausible that this result can be extended to all manifolds whose fundamental group is not virtually abelian, see Remark~\ref{rem.initial.data.triples.with.DEC}. In other cases, \eg simply connected Calabi--Yau manifolds of dimension $8k+6$, we have $\Nnsc(M)\subsetneq\dom(\bdiff)$, so one should \emph{not} expect $\dom(\olbdiff)=\Omega\bigl(\Dec(M)\bigr)$ in general.

Initial data triples are also tightly connected to a Cauchy problem for parallel spinors on Lorentzian manifolds, recently studied by Baum, Leistner and Lischewski \cite{baum.leistner.lischewski:16,baum.leistner.lecture.notes:HH,leistner.lischewski:15,leistner.lischewski:19}, see also \cite{seipel:19}.
Obviously the work by Baum, Leistner, and Lischewski is tightly connected to Leistner's work on holonomy groups of Lorentzian manifolds~\cite{leistner.jdg2007}. If $(g,K,\phi)$ is an initial data triple, then we will show in Theorem~\ref{theorem.kernel-gives-initial-data} that  $(M,g,K,\phi)$ can be extended to a Lorentzian manifold $(\ol M,\ol g)$ with a parallel spinor $\bar \phi$ such that~$M$ is a spacelike hypersurface in $\ol M$ with induced $g$ and $K$, such that
$\bar\phi$ restricts to~$\phi$ on $M$.

Let us add two comments before we sketch the structure of the article. A particular case is $\pi_0(\Decstr(M))$, which is the borderline case, when our methods are still applicable. In particular we see that $\Decstr(M)$ is not connected in case $M$ has an index theoretic obstruction in $\KO^{-\dim M}(\pt)$ to a metric with positive scalar curvature. This can be interpretated as an index theoretical obstruction for some universe to evolve from a big bang to a big crunch. This is in analogy to similar results, derived from Gerhardt's theorem using a minimal spacelike hypersurface.
We explain this in Subsection~\ref{subsec.app.relativity}.

The other comment is that at many places we will refer to a method from \cite{ammann.kroencke.mueller:p19} to construct lightlike initial data triples.
There seem to be interesting relations to the article \cite{eichmair.galloway.mendes:2009.09527}.

\subsection{Structure of the article}
The article is structured as follows. In Section~\ref{sec.notation} we recall some basic notation, we fix some conventions and we introduce DEC and its strict version. We have said above that we need a Clifford-graded version of spinors on Lorentzian manifolds, and it remains interesting to see, how this new kind of spinors relates to complex spinors; this comparison is worked out in Section~\ref{sec.spinors.semi-riemann} on arbitrary semi-Riemannian manifolds. Restricting parallel spinors to hypersurfaces or extending them from hypersurface to manifolds is omnipresent in this article, in particular for spacelike hypersurfaces in Lorentzian manifolds; necessary tools for this setting are provided by Section~\ref{sec.parallel-spinors.hypersurfaces}. Sections~\ref{sec.kernel.DWo} and \ref{sec.homotopy.groups} are the mathematical core of the article. In Section~\ref{sec.kernel.DWo} we prove that harmonic Dirac--Witten spinors on closed manifolds with $(g,K)\in \Dec(M)$ provide initial data triples; then we study the underlying manifolds for initial data triples and we show that their fundamental groups are always virtually solvable.
Section~\ref{sec.homotopy.groups} is in large parts a summary of \cite{gloeckle:p2019}, however it includes the passage from $\Decstr(M)$ to $\Dec(M)$ which essentially relies on the results obtained in Section~\ref{sec.kernel.DWo}. At the end of Section~\ref{sec.homotopy.groups} we discuss the applications towards evolutions from big bang to big crunch, and compare this approach to Gerhardt's theorem.

\subsection*{Acknowledgements.}
We thank Greg Galloway for related discussions, during which he pointed us to Gerhardt's theorem, presented at the end of Subsection~\ref{subsec.app.relativity}.
Also thanks to Robert Bryant for drawing our attention to the nice argument, that~$S^n$ does not admit a metric with a parallel spinor for $n\geq 2$. We also thank the topology group in Regensburg for discussions about group (co)homology and the Borel conjecture which play a role in Subsection~\ref{subsec.ll-gen-ini-d-t-noncpct}. We thank Mattias Dahl for discussions about the connections of Definitions~\ref{def.dec} and~\ref{def.dec.ids}.

\section{Notation and preliminaries}\label{sec.notation}
In this subsection we fix some notation and recall some known facts.
\subsection{Spacelike hypersurfaces in Lorentzian manifolds}

We assume that $M$ is an $n$-dimensional space-like hypersurface in a time- and space-oriented Lorentzian manifold $(\olM,\olg)$ of dimension $n+1$. Let $g$ be the induced Riemannian metric on~$M$. Let $e_0$ be the future-oriented unit normal field of $M$ in~$\olM$. We define the Weingarten map $W:= -\ol\nabla e_0$. The associated second fundamental form will be defined as
$K=-W^\flat$, \ie
  $$\forall X,Y\in T_pM:\; K(X,Y)=-g(W(X),Y).$$
This implies $\ol\nabla_Y X= K(X,Y)e_0 + \nabla_Y X$ for $X \in TM$. Then $K=\frac12 (\maL_{e_0}\olg)|_{TM\otimes TM}$.


We denote the standard basis of $\RR^{n,1}$ by $E_0,E_1,\ldots,E_n$ and the standard basis of $\RR^n$ by $E_1,\ldots,E_n$. The basis $(e_i)$ with $i\in \{0,\ldots,n\}$ resp.\  $i\in \{1,\ldots,n\}$ denotes a generalized orthonormal frame of $\olM$ resp.\ $M$, defined either in a point $p\in \olM$ resp.\ $p\in M$, locally on an open subset or globally on all of $\olM$ resp.\ $M$.

If $g$ is a Riemannian metric on a manifold $M$  and
 $K\in \Gamma(T^*M\otimes T^*M)$ a symmetric $2$-tensor, then we will say that
$(M,g,K)$ is a \emph{geometric hypersurface} in the time-oriented Lorentzian manifold $(\overline M,\ol g)$, if $M$ is a spacelike hypersurface and if, 
$g$ is the induced metric on $M$ and $K$ is the second fundamental form of $M$ in $\ol M$.

\subsection{Dominant energy condition}\label{subsec.dominant.energy}

For a Lorentzian manifold $(\ol M,\ol g)$ we define the Einstein curvature endomorphism $\Ein\in\End(T\ol M)$ as 
  $$\Ein(V) \definedas \mathrm{Ric}(V)- \frac12 \scal V, \quad\forall V\in T\ol M.$$

\begin{definition}\label{def.dec}
  We say that a Lorentzian manifold $(\ol M,\ol g)$ satisfies the dominant energy condition (DEC), if for all $x\in \ol M$ and all causal vectors $V,W\in T_x\ol M$ with the same time-orientation we have
    $$\olg(\Ein(V),W)\geq 0.$$
\end{definition}
Note that we did not require here, that $(\ol M,\ol g)$ carries some globally defined time- or space-orientation, although from now on we will assume that both types of orientations exist and we assume that every Lorentzian manifold comes with a fixed choice of such orientations.

Let again $M$ be a spacelike hypersurface of $\ol M$ with future-oriented
unit normal field $e_0$, and let $g$ and $K$ be defined as above. We then define the associated energy density $\rho=\olg(\Ein(e_0),e_0)\in C^\infty(M)$ and the momentum density $j=\olg(\Ein(e_0),\platz)\in \Omega^1(M)$. The equations by Gauß and Codazzi  (cf.\ \cite{BI}) imply
\begin{equation} \label{eq:CE}
	\begin{aligned}
		2\rho &= \scal^{g} + (\tr K)^2 - \|K\|^2 \\
		j &= \div K - \mathrm{d} \tr K.
	\end{aligned} 
\end{equation}

Now, for a moment we consider $M$ as an abstract manifold, not as  a submanifold of $\ol M$.
\begin{definition}\label{def.dec.ids}
An \emph{initial data set} on a manifold $M$ is a pair $(g,K)$ where $g$ is a Riemannian metric on $M$ and where $K$ is a symmetric smooth section of $T^*M\otimes T^*M$. Let $\Ini(M)$ be 
the set of all such pairs and we equip $\Ini(M)$ with the $C^\infty$-topology.
\footnote{For non-compact manifolds there are different versions of $C^\infty$-topology, \eg the strong and the weak version in the sense of \cite[Chap.~2, Sec. 1]{hirsch:76}, but we will only consider the compact case, when they all coincide.}

For any $(g,K)\in \Ini(M)$ we define $\rho$ and $j$ by equation \eqref{eq:CE}.

The subspace of those pairs $(g, K)$ satisfying the inequality $\rho \geq \|j\|$ everywhere will be denoted by $\Dec(M)$. We then say that $(g,K)$ satisfies
DEC. The subset of $(g,K)\in\Dec(M)$ which statisfies $\rho > \|j\|$, \ie the associated strict equation, is denoted by  $\Decstr(M)$, and we say that its elements \emph{strictly satisfy the dominant energy condition}.
\end{definition}

Let us explain the connection between Definitions~\ref{def.dec} and \ref{def.dec.ids}.
Let $M$ be a spacelike hypersurface of time-oriented Lorentzian manifold
$(\ol M,\ol g)$, and let $e_0$, $g$, $K$, $\rho$, and $j$ be defined as above by the embedding $M\subseteq \ol M$.
If $(\ol M,\ol g)$ satisfies DEC, then $(g,K)\in \Dec(M)$.

We conjecture -- but unfortunately we only have proofs in special cases -- that in some sense, a converse of this statement is true:
if $M$ is given, and $(g,K)\in \Dec(M)$, then there exist a
time-oriented Lorentzian manifold $(\ol M,\ol g)$ satisfying DEC 
and an embedding $M\hookrightarrow \ol M$ that induce the given $(g,K)$.

In the totally geodesic case, \ie $K=0$, DEC reduces to non-negative scalar curvature: $(g,0)\in \Dec(M)$ if and only if $\scal^g\geq 0$. And  $(g,0)\in \Decstr(M)$ if and only if $\scal^g > 0$.

\subsection{Riemannian manifolds with parallel spinors and structured Ricci-flat metrics}\label{subsec.prelim.struc.ricciflat}
Riemannian manifolds admitting a nowhere vanishing parallel spinor are Ricci-flat. 
We say that a Ricci-flat Riemannian metric $g$ on $M$ is \emph{structured} if
the universal covering $\witi M$ of $M$ is spin and if -- with respect to the pull-back metric $\tilde g$ -- there is a nowhere vanishing parallel spinor on $\witi M$.
There are complete Ricci-flat Riemannian manifolds that are not structured (\eg the Riemannian Schwarzschild metric), but no closed Ricci-flat Riemannian manifold is known so far that is not structured. If $(M,g)$ is a closed structured Ricci-flat manifold, then a finite covering of $M$ already carries a parallel spinor.

Obviously closed structured Ricci-flat manifolds also satisfy the structure theorems for closed Ricci-flat manifolds by Cheeger--Gromoll \cite{cheeger.gromoll:71} and by Fischer--Wolf \cite{fischer.wolf:75}, see  \cite[Chap.~V, Theorem~3.11]{sakai:96} for a textbook style presentation. However, they are in fact much better understood than Ricci-flat metrics in general. As shown in \cite{wang_m:91} and \cite{dai.wang.wei:05}
structured Ricci-flat metrics on closed manifolds are stable. The parallel spinor implies that the holonomy group of $(M,g)$ is special, more precisely the restricted holonomy group is a product $G_1\times\cdots\times G_\ell$, acting diagonally on some orthogonal decomposition $T_xM=V_1\oplus\cdots\oplus V_\ell$ where
each $G_i$ is either the trivial group or one of groups $\SU(k_i)$ (if $2k_i=\dim V_i$), $\Sp(k_i)$ (if $4k_i=\dim V_i$), $G_2$ (if $7=\dim V_i$) and $\Spin(7)$ (if $8=\dim V_i$), see e.g.\ \cite{joyce:book}. The resulting product decomposition is also stable (in the sense of moduli spaces) under deformations \cite{kroencke:15}. The factors have smooth
(pre-)moduli space, due to Tian--Todorov and Joyce, and thus in combination with our knowledge about the deformations of products, we see that the (pre-)moduli of such metrics is smooth \cite{ammann.kroencke.weiss.witt:19}.

\subsection{General notations and conventions}
  
In this article all Hermitian scalar products in this article are complex
linear in the first entry and complex anti-linear in the second one.

Let $\Gamma_0$ be a discrete group. The \emph{derived series} is inductively defined as the commutator group  $\Gamma_{k+1}\definedas[\Gamma_k,\Gamma_k]$. The \emph{derived length} of $\Gamma$ is $\inf\{k\in \NN_0\mid \Gamma_k=1\}$. By definition, $\Gamma_0$ is solvable, if the derived length is finite. A group is called \emph{virtually solvable of derived length (at most) $k$} if it contains a subgroup $\Gamma_0$ of finite index, such that $\Gamma_0$ solvable of derived length (at most) $k$.

\section{Spinors on semi-Riemannian manifolds}\label{sec.spinors.semi-riemann}
\label{sec:Spinors}
In this section we recall and slightly extend known definitions and theorems
about spinors on (semi-)Riemannian manifolds. An important reference for this subject is the book \cite{baum:81} where complex spinors are considered.
However for our index theoretical considerations we will also need a $\Cl_{n,k}$-linear real version of spinors on semi-Riemannian manifolds,
developed in \cite{gloeckle:p2019} in the special case $k=1$.
These two approaches shall be compared.

\subsection{Spin structures and spin diffeomorphisms}

In this subsection we briefly recall how to construct spinor bundles on space- and time-oriented semi-Riemannian manifolds. 

At first, we assume that $N$ is an oriented manifold of dimension $m=n+k$. 
We denote by $P_{\GL^+}N$ the $\GL^+(m, \RR)$-principal bundle of positively oriented frames.
A topological spin structure is a reduction $P_{\widetilde{\GL^+}}N \to P_{\GL^+}N$ along the two-fold covering $\widetilde{\GL^+}(m, \RR) \to \GL^+(m, \RR)$.
In our article a \emph{spin manifold} always denotes an oriented manifold with a \emph{fixed} choice of a topological spin structure. If~$N_1$ and~$N_2$ are two spin manifolds of the same dimension $m$,
then a spin diffeomorphism consists of an orientation preserving diffeomorphism $f\colon N_1\to N_2$
together with a \emph{fixed} choice of a lift $S_f\colon P_{\widetilde{\GL^+}}N_1\to  P_{\widetilde{\GL^+}}N_2$ of $df\colon P_{\GL^+}N_1\to P_{\GL^+}N_2$,
$(v_1,\ldots,v_m)\mapsto (df(v_1),\ldots,df(v_m))$. In particular if $f:N\to N$ is a spin diffeomorphism, and if $M_f$ is the closed manifold obtained
from $N\times [0,1]$ by gluing $N\times \{0\}$ with $N\times \{1\}$ via $f$, then the lift $S_f$ defines a spin structure on $M_f$.

Now, assume that, additionally, $N$ carries a semi-Riemannian metric $h$ of signature $(n,k)$, $n+k=m$, and that space- and time-orientations
are fixed, that are compatible with the given orientation. 
The $\SO_0(n,k)$-principal bundle $P_{\SO_0}N$ is defined as the subbundle of $P_{\GL^+}N$ consisting of those generalized orthonormal frames $(e_1, \ldots, e_{n+k})$ for which $(e_1, \ldots, e_n)$ is positively oriented spacelike and $(e_{n+1}, \ldots, e_{n+k})$ is positively oriented timelike. Thereby, $\SO_0(n,k)$ is the identity component of $SO(n,k)$.
The topological spin structure defines a reduction $P_{\Spin_0}N \to P_{\SO_0}N$ along the two-fold covering $\Spin_0(n,k) \to \SO_0(n,k)$, by pull-back.

\subsection{Spinor bundles}
On a Riemannian spin manifold, the spinor bundles on $N$ are obtained by associating a (left) $\Cl_{n,k}$-module $\Sigma$ along the representation $\Spin_0(n,k) \hookrightarrow \Cl_{n,k} \to \End(\Sigma)$.
Then the spinor bundle $\Sigma N = P_{\Spin_0}N \times_{\Spin_0} \Sigma$ will be a (left) $\Cl(TN)$-module. The Levi-Civita connection induces a connection $\nabla$ on $\Sigma N$ with respect to which the $\Cl(TN)$-action is parallel.
Usually, the $\Cl_{n,k}$-modules $\Sigma$ we consider come with additional structures such as the structure of a complex vector space, a scalar product, a $\Ztwo$-grading or a right $\Cl_{n,k}$-action. In this case, $\Sigma N$ also carries such a structure and it will be $\nabla$-parallel. If the structure is compatible with the (left) $\Cl_{n,k}$-multiplication on $\Sigma$, the induced structure respects the multiplication by $\Cl(TN)$.

There are basically two such bundles we are interested in, which will be discussed in more detail in the next subsection. The first one arises when we take an irreducible (complex) representation $\Sigma_{n,k}$ of $\Cl_{n,k} \otimes_\RR \CC$. The corresponding spinor bundle $\Sigmairr N$ is called \emph{irreducible spinor bundle}. It is a complex vector bundle and carries a compatible non-degenerate Hermitian form. This is the spinor bundle that is most commonly used in semi-Riemannian geometry and mathematical physics, in particular in most of the literature on parallel spinors.

For the second one, the $\Cl_{n,k}$-module is taken to be $\Cl_{n,k}$ with left multiplication. The resulting spinor $\Sigma_{\Cl}N$ bundle is called \emph{$\Cl_{n,k}$-linear spinor bundle}. It has compatible non-degenerate bilinear form, $\Ztwo$-grading and right $\Cl_{n,k}$-action. It is often used in index theory. 

\subsection{Comparing different version of spinors}
As we intend to connect index theory with the study of parallel spinors, we need to compare the irreducible and the $\Cl_{n,k}$-linear spinor bundle.
We start off with the following classical results on the structure and representation theory of Clifford algebras.

\begin{proposition}[{cf. \cite[Thms.~5.7, 5.9]{lawson.michelsohn:89}}]
If $n+k$ is even, then there exists a unique irreducible (complex) representation $\rho \colon \CCl_{n,k} = \Cl_{n,k} \otimes_\RR \CC \to \End(\Sigma_{n,k})$. If $n+k$ is odd, then there are precisely two irreducible (complex) representations $\rho^+ \colon \CCl_{n,k}  \to \End(\Sigma^+_{n,k})$ and $\rho^- \colon \CCl_{n,k}  \to \End(\Sigma^-_{n,k})$, distinguished by whether $\omega_\CC = i^{\frac{n-k+1}{2}} E_1 E_2 \cdots E_{n+k}$ acts by $+\id$ or $-\id$.
\end{proposition}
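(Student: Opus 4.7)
The plan is to reduce this to the well-known classification of complex Clifford algebras and their representations, which only depends on $m=n+k$.

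First I would observe that the complex Clifford algebra is insensitive to the signature: the assignment $E_j\mapsto E_j$ for spacelike generators and $E_j\mapsto iE_j$ for timelike generators extends to an algebra isomorphism $\CCl_{n,k}\xrightarrow{\sim}\CCl_m$, where $\CCl_m\definedas\Cl_m\otimes_\RR\CC$ is the complexified Clifford algebra of $\RR^m$ with its standard positive definite form. Thus the representation theory of $\CCl_{n,k}$ coincides with that of $\CCl_m$, and only the parity of $m$ will matter.

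Next I would invoke (or derive, by the standard induction using $\CCl_{m+2}\cong\CCl_m\otimes_\CC\CCl_2$ together with $\CCl_1\cong\CC\oplus\CC$ and $\CCl_2\cong M_2(\CC)$) the structure theorem
\[
\CCl_m\;\cong\;
\begin{cases}
 M_{2^{m/2}}(\CC) & \text{if $m$ is even,}\\[1mm]
 M_{2^{(m-1)/2}}(\CC)\oplus M_{2^{(m-1)/2}}(\CC) & \text{if $m$ is odd.}
\end{cases}
\]
By Wedderburn--Artin, a full matrix algebra over $\CC$ has a unique (up to isomorphism) irreducible representation, namely the defining one, of dimension equal to the matrix size. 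This immediately gives the even case: a unique irreducible $\rho\colon\CCl_{n,k}\to\End(\Sigma_{n,k})$ with $\dim_\CC\Sigma_{n,k}=2^{m/2}$. In the odd case the algebra is semisimple with exactly two simple factors, so there are exactly two inequivalent irreducible representations, each annihilating one factor and acting as the defining representation of the other.

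To identify the distinguishing invariant in the odd case I would work directly with the complex volume element $\omega_\CC=i^{(n-k+1)/2}E_1E_2\cdots E_{n+k}$. Two facts must be verified: (i) the normalization is chosen so that $\omega_\CC^2=1$; (ii) $\omega_\CC$ lies in the centre of $\CCl_{n,k}$ when $m$ is odd. For (i), using $E_j^2=-1$ for $1\le j\le n$ and $E_j^2=+1$ for $n<j\le n+k$ one computes $(E_1\cdots E_{n+k})^2=(-1)^{m(m-1)/2+n}$, and then $\omega_\CC^2=i^{n-k+1}\cdot(-1)^{m(m-1)/2+n}$, which one checks to be $+1$ precisely because $m$ is odd. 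For (ii), a single transposition of an $E_j$ past $E_1\cdots E_{n+k}$ produces a sign $(-1)^{m-1}=+1$ when $m$ is odd, so $\omega_\CC$ commutes with every generator and hence with all of $\CCl_{n,k}$. Being a central involution, $\omega_\CC$ induces a decomposition $\CCl_{n,k}=\CCl_{n,k}\cdot\tfrac12(1+\omega_\CC)\oplus\CCl_{n,k}\cdot\tfrac12(1-\omega_\CC)$ into two two-sided ideals, each of which Schur-theoretically must be one of the simple factors in the structure theorem above. Consequently the two irreducible representations $\rho^{\pm}$ are precisely those on which $\omega_\CC$ acts by $\pm\id$.

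The only step that requires genuine care is the bookkeeping in (i); the main obstacle is tracking the signs and powers of $i$ to confirm that the normalization $i^{(n-k+1)/2}$ is exactly what is needed for $\omega_\CC^2=1$ in arbitrary signature. Everything else follows from general representation theory of semisimple associative $\CC$-algebras once the structure theorem is in hand.
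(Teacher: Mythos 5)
Your proposal is correct and reproduces the standard argument of the cited reference (Lawson--Michelsohn, Thms.~5.7 and~5.9): complexification washes out the signature, the structure theorem $\CCl_m\cong M_{2^{m/2}}(\CC)$ resp.\ $M_{2^{(m-1)/2}}(\CC)^{\oplus 2}$ then gives the count of irreducibles, and in the odd case the central involution $\omega_\CC$ (whose normalization you correctly verify gives $\omega_\CC^2=1$) splits the algebra into its two simple factors and thereby distinguishes $\rho^\pm$. The paper itself offers no independent proof --- it simply cites this source --- so there is nothing further to compare against.
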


\begin{proposition}[{cf. \cite[Thm~ 4.3, Table~I]{lawson.michelsohn:89}}]
The representations $\rho \colon \CCl_{n,k} \to \End(\Sigma_{n,k})$ and $\rho^+ \oplus \rho^- \colon \CCl_{n,k}  \to \End(\Sigma^+_{n,k}) \oplus \End(\Sigma^-_{n,k})$ are a complex linear isomorphisms if $n+k$ is even or odd, respectively.
\end{proposition}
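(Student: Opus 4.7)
The plan is to reduce the proposition to the classical structure theorem for complexified Clifford algebras. The first step I would take is to eliminate the signature: upon complexification, multiplying each timelike basis vector $E_{n+1},\dots,E_{n+k}$ by $i$ yields an algebra isomorphism $\CCl_{n,k} \cong \CCl_{m,0} =: \CCl_m$, where $m = n+k$. In particular, the complex dimension of $\CCl_{n,k}$ is $2^m$, and representation theory of $\CCl_{n,k}$ reduces to that of the complexification of the positive-definite Clifford algebra.

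Second, I would invoke the standard structure theorem for $\CCl_m$, which can be proved by induction on $m$ using the recursion $\CCl_{m+2} \cong \CCl_m \otimes_\CC \End_\CC(\CC^2)$:
\begin{equation*}
\CCl_m \;\cong\;
\begin{cases}
\End_\CC\bigl(\CC^{2^{m/2}}\bigr), & m \text{ even},\\
\End_\CC\bigl(\CC^{2^{(m-1)/2}}\bigr)\oplus\End_\CC\bigl(\CC^{2^{(m-1)/2}}\bigr), & m \text{ odd}.
\end{cases}
\end{equation*}
In the even case the algebra is simple, so every nonzero representation is faithful and a multiple of the unique irreducible representation, whose dimension must be $2^{m/2}$ by comparing dimensions. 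Since $\rho$ is a nonzero algebra morphism between $\CC$-algebras of the same dimension $2^m$, it is automatically an isomorphism. This settles the even case.

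Third, for the odd case, I would exploit the normalized volume element $\omega_\CC = i^{(n-k+1)/2}E_1\cdots E_m$. A direct computation using $E_j^2 = +1$ for $j\le n$, $E_j^2 = -1$ for $j>n$, and the anticommutation relations shows that $\omega_\CC$ is central when $m$ is odd and satisfies $\omega_\CC^2 = \id$, provided the normalization constant is chosen exactly as above. Hence $e^\pm \definedas \tfrac{1}{2}(1 \pm \omega_\CC)$ are complementary central idempotents splitting $\CCl_{n,k}$ as a direct sum of two two-sided ideals, each isomorphic to the matrix algebra $\End_\CC(\CC^{2^{(m-1)/2}})$. By the previous proposition, on the summand where $\omega_\CC$ acts as $+\id$ the representation $\rho^+$ is irreducible and faithful, and similarly for $\rho^-$ on the other summand. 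Consequently $\rho^+ \oplus \rho^-$ restricts on $e^+\CCl_{n,k}$ and $e^-\CCl_{n,k}$ to isomorphisms onto $\End(\Sigma_{n,k}^+)$ and $\End(\Sigma_{n,k}^-)$ respectively, while the off-diagonal contributions vanish because $\omega_\CC$ acts as a scalar on each $\Sigma_{n,k}^\pm$. A dimension count then finishes the claim.

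The only genuine obstacle is the sign/normalization bookkeeping for $\omega_\CC^2 = \id$ in arbitrary signature: one must verify that the exponent $(n-k+1)/2$, together with the sign $(-1)^{m(m-1)/2}$ from reordering and the sign $(-1)^k$ from the timelike squares, conspires to give $+1$. This is a finite case check modulo $4$ and is indeed the reason the authors single out precisely this normalization in their definition of $\omega_\CC$; no conceptual difficulty remains.
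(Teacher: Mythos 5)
The paper does not prove this proposition; it is cited directly to Lawson--Michelsohn, and your argument is exactly the standard proof one finds there: complexify to kill the signature, invoke $\CCl_{m}\cong M_{2^{m/2}}(\CC)$ or $M_{2^{(m-1)/2}}(\CC)\oplus M_{2^{(m-1)/2}}(\CC)$ via the period-two recursion, use simplicity plus a dimension count in the even case, and use the central idempotents $\tfrac12(1\pm\omega_\CC)$ in the odd case. One small correction to your bookkeeping: in the Lawson--Michelsohn/Baum convention in force here the \emph{spacelike} generators $E_1,\dots,E_n$ square to $-1$ and the \emph{timelike} ones $E_{n+1},\dots,E_{n+k}$ square to $+1$ (opposite to what you wrote), so the contribution from the generator squares to $(E_1\cdots E_m)^2$ is $(-1)^n$, not $(-1)^k$; with that fixed, $\omega_\CC^2 = i^{\,n-k+1}(-1)^{m(m-1)/2+n}$ does equal $1$ for all odd $m=n+k$, as needed for your idempotents, whereas your stated convention already fails at $(n,k)=(1,0)$.
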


Left and right multiplication turn $\CCl_{n,k}$ into a $\CCl_{n,k}$-bimodule. Similarly, post- and pre-compostion with $\rho^{(\pm)}(-)$ make $\End(\Sigma^{(\pm)})$ a $\CCl_{n,k}$-bimodule (and hence also $\End(\Sigma^+_{n,k}) \oplus \End(\Sigma^-_{n,k})$). The isomorphisms above turn into isomorphisms of $\CCl_{n,k}$-bimodules.

As an immediate consequence, we get well-behaved comparison isomorphisms between
 the spinor bundles $\Sigmairr^{(\pm)}N = P_{\Spin_0}N \times_{\Spin_0} \Sigma_{n,k}^{(\pm)}$ and $\Sigma_{\Cl}N = P_{\Spin_0}N \times_{\Spin_0} \Cl_{n,k}$:
\begin{corollary} \label{cor:BundleIsos}
The representations $\rho$ and $\rho^+ \oplus \rho^-$ induce complex linear vector bundle isomorphisms
\begin{equation} \label{eq:BundleIsos}
\begin{aligned}
		\Sigma_{\Cl} N \otimes_\RR \CC &\longrightarrow \Sigmairr N \otimes_\CC (\Sigma_{n,k})^* \\
		\Sigma_{\Cl} N \otimes_\RR \CC &\longrightarrow \Sigmairr^+N \otimes_\CC (\Sigma_{n,k}^+)^* \oplus \Sigmairr^-N \otimes_\CC (\Sigma_{n,k}^-)^*,
\end{aligned}
\end{equation}
respectively. Furthermore, these isomorphisms respect the $\CCl(TN)$-$\CCl_{n,k}$-bimodule structure and preserve the connection. 
\end{corollary}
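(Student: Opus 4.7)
The strategy is to upgrade the algebraic isomorphisms from the two propositions to bundle isomorphisms by passing through the associated bundle construction. First I would read each side fiberwise with respect to a spin frame: the left hand side becomes the $\CCl_{n,k}$-bimodule $\CCl_{n,k}$ under left and right multiplication, and the right hand side becomes $\End_\CC(\Sigma_{n,k}) \cong \Sigma_{n,k} \otimes_\CC (\Sigma_{n,k})^*$ with the $\CCl_{n,k}$-bimodule structure where $a \in \CCl_{n,k}$ acts on the left by post-composition with $\rho(a)$ and on the right by pre-composition with $\rho(a)$ (in the odd case, with $\rho^+ \oplus \rho^-$ in place of $\rho$). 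The two propositions quoted above assert precisely that $\rho$, respectively $\rho^+ \oplus \rho^-$, is an isomorphism of complex algebras, hence in particular an isomorphism of $\CCl_{n,k}$-bimodules.

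Second, I would note that the $\Spin_0(n,k)$-action used in forming the associated bundles is the restriction along $\Spin_0(n,k) \subset \CCl_{n,k}$ of the \emph{left} $\CCl_{n,k}$-action. Since the bimodule isomorphism is in particular equivariant for this left action, it descends to a complex vector bundle isomorphism of the associated bundles and thereby yields the maps in \eqref{eq:BundleIsos}. The right $\CCl_{n,k}$-action on each bundle is by definition inherited from the right action on the fiber, so its preservation is automatic; the left $\CCl(TN)$-action corresponds under a spin frame to left multiplication by the span of the $E_i$ in $\CCl_{n,k}$, so it is preserved for the same reason. The $\Ztwo$-grading and the non-degenerate bilinear/Hermitian forms are handled analogously, since on the model fibers they are $\Spin_0(n,k)$-equivariantly determined by the Clifford structure.

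Finally, I would address parallelism. Both connections descend from the Levi-Civita connection on $P_{\Spin_0}N$ via the associated bundle construction, so any vector bundle map induced by a $\Spin_0(n,k)$-equivariant fiber map commutes with parallel transport; this immediately shows that the comparison isomorphisms are $\nabla$-parallel. The main bookkeeping obstacle, which I expect to be the one nontrivial point, is pinning down that the right $\CCl_{n,k}$-action on $\End_\CC(\Sigma_{n,k})$ by pre-composition agrees, under the canonical isomorphism with $\Sigma_{n,k} \otimes_\CC (\Sigma_{n,k})^*$, with the natural right $\CCl_{n,k}$-action on the dual factor $(\Sigma_{n,k})^*$ obtained by dualizing $\rho$; once this convention is fixed and compared with the conventions in \cite{gloeckle:p2019}, the verification of all claimed compatibilities is formal.
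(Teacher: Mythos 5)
Your argument is correct and is precisely the one the paper intends: the paper states the corollary as an ``immediate consequence'' of the preceding propositions after equipping $\End(\Sigma_{n,k}^{(\pm)})$ with the $\CCl_{n,k}$-bimodule structure by post- and pre-composition, and your passage through $\Spin_0(n,k)$-equivariance of the fiberwise bimodule isomorphism in the associated bundle construction is exactly the implicit step. The remarks on parallelism (equivariant fiber maps commute with parallel transport for connections induced from the principal bundle) and on identifying pre-composition with the dual right action are the right bookkeeping and contain no gaps.
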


We are also interested in scalar products. On $\End(\Sigma_{n,k}^{(\pm)})$ (and hence also on $\End(\Sigma^+_{n,k}) \oplus \End(\Sigma^-_{n,k})$), there is a canonical positive definite Hermitian scalar product. Namely, starting from an arbitrary scalar product on $\Sigma_{n,k}^{(\pm)}$, we may construct a scalar product that is invariant under multiplication by $E_1, E_2,  \ldots$ and $E_{n+k}$ by an averaging procedure. By Schur's lemma, a scalar product with this invariance property is unique up to a factor. Taking the induced scalar product on the dual (in the complex sense), we obtain a scalar product on $\End(\Sigma_{n,k}^{(\pm)}) = \Sigma_{n,k}^{(\pm)} \otimes_\CC (\Sigma_{n,k}^{(\pm)})^*$. This turns out to be independent of the normalizing factor. In fact, it is given by $\< A,B \> = \tr(A B^*)$, where the adjoint $(-)^*$ is taken with respect to the scalar product on $\Sigma_{n,k}^{(\pm)}$.

On $\CCl_{n,k}$ (and similarly for $\Cl_{n,k}$), we may describe a scalar product as follows: For $I = \{i_1 < \ldots < i_l\} \subseteq \{1, \ldots, n+k\}$, let $E_I = E_{i_1} \ldots E_{i_l} \in \CCl_{n,k}$. Then $(E_I)_{I \subseteq \{1, \ldots, n+k\}}$ is a basis of $\CCl_{n,k}$, and we require it to be orthogonal with $\< E_I, E_I \> = 2^{\lfloor \frac{n+k+1}{2} \rfloor}$ for all $I$.

\begin{lemma} \label{lem:ScalarProducts}
The $\CCl_{n,k}$-bimodule isomorphisms $\rho$ and $\rho^+ \oplus \rho^-$ are isometric.
\begin{proof}
	It only remains to show that the scalar product is preserved. Let us first consider the case, where $n+k$ is even. When $\< - , - \>$ is an $E_1$-, $E_2$- $, \ldots$ and $E_{n+k}$-invariant scalar product on $\Sigma_{n,k}$, then $\<v, \rho(E_I)^* \rho(E_I) w\> = \<E_I \cdot v, E_I \cdot w\> = \<v, w\>$ for all $v, w \in \Sigma_{n,k}$ and $I \subseteq \{1, \ldots, n+k\}$. Hence, $\tr(\rho(E_I)\rho(E_I)^*) = \tr(\rho(E_I)^*\rho(E_I)) = \tr(\id) = \dim(\Sigma_{n,k}) = 2^{\frac{n+k}{2}}$ and it remains to show that $\tr(\rho(E_I)\rho(E_J)^*) = 0$ for $I \neq J \subseteq \{1,\ldots, n+k\}$.
	
	Note, that $\rho(E_J)^*$ coincides with $\rho(E_J)$ up to a factor of $\pm 1$ and that $\rho(E_I)\rho(E_J) = \pm \rho(E_{I \triangle J})$, where $I \triangle J$ denotes the symmetric difference of the subsets $I, J \subseteq \{1, \ldots, n+k\}$. Thus it only remains to show that $\tr(\rho(E_I)) = 0$ for $I \neq \emptyset$.
	
	It is easy to see that, when $\emptyset \neq I \neq \{1,\ldots, n+k\}$, there is some $j \in \{1, \ldots, n+k\}$ with $E_I E_j = - E_j E_I$. Moreover, as $n+k$ is assumed even, this also true for $I = \{1,\ldots, n+k\}$ (and any $j$). From $\tr(\rho(E_I)) = \tr(\rho(E_I)\rho(E_j)\rho(E_j)^{-1}) = - \tr(\rho(E_j)\rho(E_I)\rho(E_j)^{-1})= -\tr(\rho(E_I))$ it follows that $\tr(\rho(E_I)) = 0$.
	
	If $n+k$ is odd, the same proof works (taking the representation $\rho^+ \oplus \rho^-$ instead of $\rho$) with two changes. Firstly, in this case $\tr(\id) = \dim(\Sigma_{n,k}^+) + \dim(\Sigma_{n,k}^-) = 2^{\frac{n+k+1}{2}}$.
Secondly, for $I = \{1,\ldots, n+k\}$  any $E_j$ commutes with $E_I$. However, in this case $\tr((\rho^+ \oplus \rho^-)(E_I))=0$ still holds as $E_I$ coincides with $\omega_\CC$ up to a power of $i$ and $\tr((\rho^+ \oplus \rho^-)(\omega_\CC))=\tr(\id_{\Sigma_{n,k}^+})+ \tr(-\id_{\Sigma_{n,k}^-}) = 0$.
\end{proof}
\end{lemma}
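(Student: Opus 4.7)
The plan is to verify the isometry property on the basis $(E_I)_{I\subseteq\{1,\ldots,n+k\}}$, since both sides of the purported isometry are $\CCl_{n,k}$-bimodule maps and the bilinear forms in question are non-degenerate. So I will compute $\tr(\rho(E_I)\rho(E_J)^*)$ (or the analogous expression with $\rho^+\oplus\rho^-$) and show it matches the prescribed values $\langle E_I, E_J\rangle$ on $\CCl_{n,k}$: namely $2^{\lfloor(n+k+1)/2\rfloor}$ when $I=J$ and $0$ otherwise.

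First I would exploit the invariance of the chosen Hermitian scalar product on $\Sigma_{n,k}^{(\pm)}$ under the generators $E_1,\ldots,E_{n+k}$: this forces each $\rho(E_j)$ to be unitary, hence $\rho(E_j)^*=\rho(E_j)^{-1}=\pm\rho(E_j)$ (sign depending on whether $E_j^2=\pm 1$). Consequently $\rho(E_I)\rho(E_J)^*=\pm\rho(E_{I\triangle J})$ up to a sign, where $I\triangle J$ is the symmetric difference. The diagonal case $I=J$ then gives $\rho(E_I)\rho(E_I)^*=\id$, whose trace is $\dim(\Sigma_{n,k})=2^{(n+k)/2}$ in the even case and $\dim(\Sigma_{n,k}^+)+\dim(\Sigma_{n,k}^-)=2^{(n+k+1)/2}$ in the odd case, matching the normalization $2^{\lfloor(n+k+1)/2\rfloor}$ in both situations.

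The heart of the argument is therefore to show $\tr(\rho(E_K))=0$ for every nonempty $K\subseteq\{1,\ldots,n+k\}$ (and analogously for $\rho^+\oplus\rho^-$). The standard trick is to find some index $j$ so that $E_KE_j=-E_jE_K$ in $\CCl_{n,k}$; then invertibility of $\rho(E_j)$ together with cyclicity of the trace gives $\tr(\rho(E_K))=-\tr(\rho(E_K))$, hence zero. For nonempty proper subsets $K$ such a $j$ always exists by counting anticommutations among the generators.

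The one case where this argument breaks is $K=\{1,\ldots,n+k\}$, i.e.\ $E_K$ is (up to a power of $i$) the complex volume element $\omega_\CC$, which is central when $n+k$ is odd. This is where I expect the main obstacle, and also where the case distinction between $\rho$ and $\rho^+\oplus\rho^-$ really enters. In the even case $\omega_\CC$ anticommutes with each $E_j$, so the anticommutation trick still applies. In the odd case $\rho^\pm(\omega_\CC)=\pm\id$ by definition of the two irreducible summands, and because $\dim(\Sigma_{n,k}^+)=\dim(\Sigma_{n,k}^-)$ the traces cancel: $\tr((\rho^+\oplus\rho^-)(\omega_\CC))=\dim\Sigma_{n,k}^+-\dim\Sigma_{n,k}^-=0$. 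Once this last identity is in hand, the off-diagonal traces all vanish and the isometry claim follows.
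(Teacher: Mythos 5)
Your proof is correct and follows essentially the same route as the paper: reduce the isometry claim to showing $\tr(\rho(E_I)\rho(E_J)^*)$ matches the prescribed Gram matrix, handle the diagonal via unitarity, reduce the off-diagonal to $\tr(\rho(E_K))=0$ for $K\neq\emptyset$, use the anticommutation-and-cyclicity trick, and treat the volume element $\omega_\CC$ separately in the odd case by noting $\rho^\pm(\omega_\CC)=\pm\id$ on summands of equal dimension. The only cosmetic difference is that you motivate checking on a basis via the bimodule structure, while the paper simply computes directly.
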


In general (if $n,k \neq 0$), the positive definite scalar product will not be invariant under left-multiplication by elements in $\Spin_0(n,k)$, \ie the identity component of $\Spin(n,k) \subseteq \CCl_{n,k}$. Thus it does not give rise to a scalar product on the spinor bundle. This can be fixed by considering
\begin{align} \label{eq:InnerProduct}
	\llangle \phi, \psi \rrangle \coloneqq \< \epsilon_k E_{n+1} \cdots E_{n+k} \cdot \phi, \psi \>
\end{align}
instead, where $\epsilon_k=1$ if $k \equiv 0,1 \mod 4$ and $\epsilon_k = i$ if $k \equiv 2,3 \mod 4$, see also  \cite[Ch.~1, Satz 1.11]{baum:81}).
The \emph{indefinite} Hermitian product $\llangle -,- \rrangle$ satisfies
\begin{align} \label{eq:ClMultInnerProd}
	\llangle X \cdot \phi, \psi \rrangle = (-1)^{k+1} \llangle \phi, X \cdot \psi \rrangle
\end{align}
for all $X \in \RR^{n,k}$ and, as a consequence, is $\Spin_0(n,k)$-invariant (cf.\ \cite[Ch.~1, Satz 1.12]{baum:81}). Thus it gives rise to a non-degenerate inner product on the spinor bundle, which will be automatically parallel.

In the case of real spinor bundles such as $\Sigma_{\Cl}N$, we may use the formula \eqref{eq:InnerProduct} without the $\epsilon_k$-factor to define a non-degenerate bilinear form. It will be symmetric for $k \equiv 0,1 \mod 4$ and anti-symmetric for $k \equiv 2,3 \mod 4$.
If we agree to produce a hermitian form on the complexification out of a symmetric bilinear form by sesquilinear extension and out of an anti-symmetric bilinear form by sesquilinear extension following multiplication by $i$, we obtain the following:
\begin{corollary} \label{cor:ScalarProducts}
The bundle isomorphisms \eqref{eq:BundleIsos} are isometric with respect to the Hermitian inner products $\llangle -,- \rrangle$.
\end{corollary}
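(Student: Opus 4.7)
The plan is to reduce the statement to Lemma~\ref{lem:ScalarProducts} by exhibiting the indefinite Hermitian forms $\llangle-,-\rrangle$ on both sides of \eqref{eq:BundleIsos} as modifications of the positive definite scalar products of that lemma, where the modification is applying the element $\epsilon_k\,\omega_T \definedas \epsilon_k\,E_{n+1}\cdots E_{n+k}\in\CCl_{n,k}$ by left multiplication in the first slot. Since $\rho$ and $\rho^+\oplus\rho^-$ are bimodule isomorphisms, they commute with left multiplication by $\omega_T$, so the modification transports through the algebraic isometry of Lemma~\ref{lem:ScalarProducts} without difficulty.

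First, I would align the complexification convention used on the $\Cl_{n,k}$-linear side with the $\epsilon_k$-factor baked into the irreducible side. The real bilinear form on $\Cl_{n,k}$ that enters the construction of the Hermitian form on $\Sigma_{\Cl}N\otimes_\RR\CC$ is $\beta(x,y) = \langle \omega_T x, y\rangle_{\Cl}$, symmetric for $k\equiv 0,1\bmod 4$ and antisymmetric for $k\equiv 2,3\bmod 4$. Since $\epsilon_k=1$ exactly in the first case and $\epsilon_k=i$ exactly in the second, the prescribed passage to a Hermitian form on the complexification — sesquilinear extension in the symmetric case and sesquilinear extension preceded by multiplication by $i$ in the antisymmetric case — uniformly yields
\[ \beta_\CC(x,y) = \langle \epsilon_k\,\omega_T\,x,\,y\rangle_{\CCl}, \]
with $\langle-,-\rangle_{\CCl}$ denoting the sesquilinear extension of the orthogonal-basis scalar product from Lemma~\ref{lem:ScalarProducts}. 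I regard this bookkeeping step as the principal obstacle: once the $\epsilon_k$-correspondence is set up case-by-case in $k\bmod 4$, the rest is essentially formal.

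With this in place, the core computation is a one-line check. For $x,y\in\CCl_{n,k}$, using the bimodule property of $\rho$ to pull left multiplication by $\omega_T$ through $\rho$, followed by Lemma~\ref{lem:ScalarProducts} for the positive definite scalar products, one gets
\[ \llangle\rho(x),\rho(y)\rrangle_{\End} = \langle\epsilon_k\,\rho(\omega_T)\circ\rho(x),\,\rho(y)\rangle_{\End} = \langle\rho(\epsilon_k\,\omega_T x),\,\rho(y)\rangle_{\End} = \langle\epsilon_k\,\omega_T x,\,y\rangle_{\CCl} = \beta_\CC(x,y), \]
and the same chain works with $\rho^+\oplus\rho^-$ in odd dimension. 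Under the canonical identification $\End(\Sigma_{n,k}^{(\pm)}) = \Sigma_{n,k}^{(\pm)}\otimes_\CC(\Sigma_{n,k}^{(\pm)})^*$, the modified scalar product on the left-hand side of this chain decomposes as the tensor product of $\llangle-,-\rrangle$ on $\Sigma_{n,k}^{(\pm)}$ with the canonical Hermitian form on $(\Sigma_{n,k}^{(\pm)})^*$, so the fiberwise identity is already the equality of Hermitian forms on the two sides of \eqref{eq:BundleIsos}. To globalize I would invoke~\eqref{eq:ClMultInnerProd}: both forms are $\Spin_0(n,k)$-invariant, so the associated-bundle construction promotes the fiberwise isometry to a bundle isometry, which is automatically parallel since the Clifford action and the inner products on the fiber models are parallel under $\nabla$.
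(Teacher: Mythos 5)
Your proof is correct and follows exactly the route the paper intends: the corollary is stated as an immediate consequence of Lemma~\ref{lem:ScalarProducts} together with the complexification convention for the bilinear form, and your reduction -- writing both indefinite forms as the positive definite ones twisted by left multiplication by $\epsilon_k E_{n+1}\cdots E_{n+k}$ and transporting this through the bimodule isomorphisms -- is precisely the argument the paper leaves implicit, with the $\epsilon_k$-bookkeeping in $k\bmod 4$ filled in correctly.
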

Although this inner product is naturally defined on the spinor bundle, for certain arguments it is more useful to work with definite scalar products.
A splitting of $TN$ into a space- and a timelike subbundle, locally given by positively oriented timelike frames $(e_{n+1}, \ldots, e_{n+k})$, gives rise to such a scalar product by
\begin{align*}
	\< \phi, \psi \> = \llangle \epsilon_k^{-1} e_{n+k} \cdots e_{n+1} \cdot \phi, \psi \rrangle,
\end{align*}
or the same formula without the $\epsilon_k$ in the real case.
Note, however, that $\< -, - \>$ depends on the chosen splitting and is in general not parallel.

\section{Parallel spinors and spacelike hypersurfaces}\label{sec.parallel-spinors.hypersurfaces}

From now on $\Sigma \overline{M}$ denotes the spinor bundle over the Lorentzian manifold $(\overline{M},\ol{g})$, obtained by some (fixed) representation
$\rho$ of the Clifford algebra, so it can be  $\Sigma_{\Cl} \ol{M}$, $\Sigmairr\ol{M}$ or it can be obtained from some other reprensentation.  The statements of this section hold independently on which representation is chosen, the proofs do not depend on it. Alternatively, one might always think of the case $\Sigmairr\ol{M}$ and use Corollary~\ref{cor:BundleIsos} to translate the statements to the $\Sigma_{\Cl} \ol{M}$-case later, when necessary.

\subsection{The hypersurface spinor bundle}
\label{sec:HypSpinBun}
In this subsection we fix a Riemannian metric $g$ and
a symmetric $2$-tensor $K\in \Gamma(T^*M\otimes T^*M)$.
If $(M,g,K)$ is a geometric hypersurface in $(\overline M,\ol g)$, then
we will need to restrict spinors from $\overline{M}$
to  $M$. These spinors are then sections of the restricted bundle
$\Sigma \overline{M}_{|M}$, and we should consider this bundle from an intrinsic point of view. The bundle $\Sigma \overline{M}_{|M}$ carries several structures: a connection, a Clifford multiplication by vectors, two hermitian products and more. We will see that this restricted spinor bundle with all its structures can be constructed intrinsically and will be called $\ol\Sigma M$;
it depends only on $(M,g,K)$ and the spin structure on $M$.

This bundle has the property, that for any Lorentzian manifold $(\overline{M},\overline{g})$ with $M$ as spacelike hypersurface, induced spin structure, induced metric $g$ and induced second fundamental form $K$ we have $\Sigma \overline{M}_{|M}=\ol\Sigma M$.

At first, for a section $(Y,f)$ of $TM\oplus \underline{\RR}\to M$, where $\underline{\RR}$ denotes the trivial bundle, we define the connection
  $$\ol\nabla_X(Y,f)= \Bigl(\nabla_XY-f W(X),X(f)+ K(X,Y) f\Bigr).$$
This definition is taylored to achieve the following: If $(M,g,K)$ is a geometric hypersurface of $(\ol M,\olg)$, then $(Y,f)\mapsto Y+fe_0$ is a connection preserving isomorphism from
$(TM\oplus \underline{\RR},\ol\nabla)$ to $T\ol{M}_{|M}$, where $T\ol{M}_{|M}$ carries the Levi-Civita connection of $(\ol{M}, \ol{g})$. Motivated by this isomorphism, the elements $1\in \underline\RR$ will also be written as $e_0$.

The bundle $TM \oplus \underline\RR$ carries a bundle metric that mimics the metric $\ol g$ on $T\olM_{|M}$: It is defined by $g$ on $TM$, the negative definite standard metric on $\underline\RR$ and orthogonality of the sum.
Let $P_{\SO_0(n,1)}(M)$ be the $\SO_0(n,1)$-principal bundle of time- and space-oriented generalized orthonormal frames of $TM \oplus \underline\RR$. A spin structure on $M$ yields a $\Spin_0(n,1)$-principal bundle $P_{\Spin_0(n,1)}(M)$ together with an equivariant map
$\theta: P_{\Spin_0(n,1)}(M)\to P_{\SO_0(n,1)}(M)$. We define
$\ol\Sigma M:=P_{\Spin_0(n,1)}(M)\times_\rho \Sigma$,
where $\rho \colon \Cl_{n,1} \to \End(\Sigma)$ is a $\Cl_{n,1}$-representation.
This vector bundle carries several structures that will be relevant for us. From the connection $\ol\nabla$ we obtain connection-$1$-forms on the principal bundles and finally a connection on $\ol\Sigma M$, also called $\ol\nabla$.
A $\Spin_0(n,1)$-invariant (indefinite) scalar product on $\Sigma$ gives rise to a $\ol\nabla$-parallel indefinite scalar product $\llangle\platz,\platz\rrangle$ on  $\ol\Sigma M$. Associated to the Clifford multiplication of $\RR^{n,1}$ on $\Sigma$ we obtain a $\ol\nabla$-parallel Clifford multiplication
$$(TM\oplus \underline\RR,\ol\na)\otimes (\ol\Sigma M,\ol\na) \to (\ol\Sigma M,\ol\na).$$
Restricted to $e_0$ we obtain an involution
$e_0\in\End(\ol\Sigma M)$, which is however not $\ol\nabla$-parallel; we have $\ol\nabla_Xe_0=-W(X)$.
The bundle $\ol\Sigma M$ also carries a positive definite scalar product
which can either be obtained by the formula
$\<\phi,\psi\>=\llangle e_0\cdot \phi,\psi\rrangle$
or as the scalar product associated to the positive definite scalar product
on $\Sigma$ using only frames $A\in P_{\Spin_0(n,1)}(M)$ for which $E_0$ is
the first vector of $\theta(A)$. Again this positive definite scalar
product is not $\ol\na$-parallel.
As a consequence of \eqref{eq:ClMultInnerProd}, the involution $e_0$ is self-adjoint with respect to this definite scalar product.

Obviously, if  $(M,g,K)$ is a geometric hypersurface of $(\ol M,\olg)$, then
we have $\Sigma\ol M_{|M} = \ol\Sigma M$ with all the structures mentioned above.

If we carry out this construction for $(g,0)$ instead of $(g,K)$ we obtain an
isomorphic bundle, however with a different connection, denoted as $\nabla$.
The positive definite scalar product $\<\platz,\platz\>$ is $\nabla$-parallel, and obviously $\ol\Sigma M$ with this scalar product,
the induced Clifford multiplication and the connection $\nabla$ is isomorphic
to the classical spinor bundle defined in terms of the representation
$\rho|_{\Spin(n)}$. 

Following step by step  through our above construction
how $K$ affects the connection-$1$-forms on  $P_{\SO_0(n,1)}(M)$, $P_{\Spin_0(n,1)}(M)$, and finally for $\phi\in \Gamma(\ol\Sigma M)$ we obtain 
\begin{align} \label{eq:ConnBtwConns}
  \ol\nabla_X\phi = \nabla_X\phi+ \frac12 e_0 \cdot W(X)\cdot\phi.
\end{align}
When $\ol\Sigma M$ is a complex bundle, then we may define a second kind of Clifford multiplication by $Y\bigcdot \phi:=ie_0\cdot Y\cdot \phi$. Then \eqref{eq:ConnBtwConns} reads
\begin{align*}
	\ol\nabla_X\phi = \nabla_X\phi-\frac i2  W(X) \bigcdot \phi.
\end{align*}

\subsection{Dirac currents}

Let $\ol\Sigma M$ be a hypersurface spinor bundle and $\phi \in \ol\Sigma_p M$, $p\in M$. Recall that $\langle e_0 \cdot X \cdot \phi,\, \phi \rangle$ is always real, for any $X \in T_pM$:
	\begin{align*}
		\langle e_0 \cdot X \cdot \phi,\, \phi \rangle = - \langle \phi,\, X \cdot e_0 \cdot \phi \rangle = \langle \phi,\, e_0 \cdot X \cdot \phi  \rangle = \overline{ \langle e_0 \cdot X \cdot \phi,\, \phi \rangle }.
	\end{align*}
Because of this, the Riemannian Dirac current $U_\phi\in T_pM$ is well-defined by demanding 
\begin{equation}\label{def-Riemann-Dirac-current}
g(U_\phi,X)=\<e_0 \cdot X \cdot \phi,\phi\>\quad \forall X\in T_pM.
\end{equation}
In the case of a complex hypersurface spinor bundle, this takes the more familiar form
\begin{equation*}
g(U_\phi,X)=-i \<X\bigcdot \phi,\phi\>\quad \forall X\in T_pM
\end{equation*}
cf.~\cite{baum.leistner.lischewski:16} and \cite{ammann.kroencke.mueller:p19}.

The Riemannian Dirac current is tightly connected with the Lorentzian Dirac current $V_\phi \in T_p\ol M=T_pM \oplus\RR$ defined by the condition
\begin{align}\label{def-Lorentz-Dirac-current}
	\ol g(V_\phi, X) = -\llangle X\cdot \phi,\phi\rrangle=- \<e_0 \cdot X \cdot \phi, \phi \> \quad \forall X\in T_p\olM.
\end{align}

It is straigthforward to check that
\begin{equation}\label{UV.equation}
V_\phi = -U_\phi + u_\phi e_0,
\end{equation}
where $u_\phi = \|\phi\|^2 \coloneqq \<\phi, \phi\>$.
In particular, $V_\phi \neq 0$ unless $\phi = 0$.
We calculate
\begin{align}
	\| V_\phi \cdot \phi \|^2 &= \|-U_\phi \cdot \phi\|^2 + 2\Re \<-U_\phi \cdot \phi, u_\phi e_0 \cdot \phi \>  +  \|u_\phi e_0 \cdot \phi\|^2 \nonumber\\
		&= g(U_\phi, U_\phi) \|\phi\|^2 - 2 u_\phi \Re(\<e_0 \cdot U_\phi \cdot \phi, \phi\>) + u_\phi^2 \|\phi\|^2\nonumber \\
		&= g(U_\phi, U_\phi) \|\phi\|^2 - 2 u_\phi g(U_\phi,U_\phi) - \ol g(u_\phi e_0, u_\phi e_0) \|\phi\|^2 \nonumber\\
		&= -\ol g(V_\phi,V_\phi) \|\phi\|^2.\label{Vphi-cdot-phi}
\end{align}
We have obtained the following well-known lemma. If $V_\phi$ is lightlike, then
\eqref{Vphi-cdot-phi} immediately provides $V_\phi \cdot \phi = 0$. 
\begin{lemma}\label{lem.lorentz-dirac.current.annih}
Let $\phi \in \ol\Sigma M$ with $\phi \neq 0$.  Then the vector $V_\phi$ is non-zero, causal (lightlike or timelike), and future-oriented. Moreover, if $V_\phi$ is lightlike, then $V_\phi \cdot \phi = 0$.
\end{lemma}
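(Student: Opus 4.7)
The plan is to read off all four claims directly from equations \eqref{UV.equation} and \eqref{Vphi-cdot-phi}, together with positive‐definiteness of $\langle \cdot,\cdot\rangle$.

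First, I would show $V_\phi \neq 0$ and future‐orientation simultaneously. From the decomposition $V_\phi = -U_\phi + u_\phi e_0$ in \eqref{UV.equation}, the $e_0$–component equals $u_\phi = \|\phi\|^2$, which is strictly positive since $\phi \neq 0$. Hence $V_\phi \neq 0$, and \emph{if} $V_\phi$ turns out to be causal, then it is automatically future–oriented because $e_0$ is future–oriented and a causal vector with positive $e_0$–component is future–pointing.

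Next, I would establish that $V_\phi$ is causal by invoking \eqref{Vphi-cdot-phi}. The left–hand side $\|V_\phi\cdot\phi\|^2$ is the norm of an element of the fiber $\ol\Sigma_p M$ with respect to the positive definite scalar product $\langle\cdot,\cdot\rangle$, hence non‐negative. Since $\|\phi\|^2 > 0$, the identity
\[
\|V_\phi\cdot\phi\|^2 = -\ol g(V_\phi, V_\phi)\,\|\phi\|^2
\]
forces $\ol g(V_\phi, V_\phi) \leq 0$, which is exactly the causality condition.

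Finally, for the lightlike case, I would observe that if $\ol g(V_\phi, V_\phi) = 0$, then the same identity gives $\|V_\phi \cdot \phi\|^2 = 0$, and definiteness of $\langle\cdot,\cdot\rangle$ implies $V_\phi \cdot \phi = 0$. There is no serious obstacle here — the main point is that all three substantive assertions (non‐vanishing, causality, lightlike annihilation) are immediate once one trusts the computation \eqref{Vphi-cdot-phi} and remembers that the definite inner product $\langle\cdot,\cdot\rangle$ rather than the indefinite $\llangle\cdot,\cdot\rrangle$ appears on the left–hand side.
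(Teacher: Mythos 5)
Your proof is correct and takes essentially the same approach as the paper: the paper derives all the assertions as immediate consequences of the decomposition \eqref{UV.equation}, the identity \eqref{Vphi-cdot-phi}, and positive definiteness of $\langle\cdot,\cdot\rangle$, leaving the causality and future-orientation inferences implicit. You merely spell out those two inferences (non-negativity of the left side of \eqref{Vphi-cdot-phi} forcing $\ol g(V_\phi,V_\phi)\le 0$, and the positive $e_0$-coefficient $u_\phi=\|\phi\|^2$ giving future-orientation), which is a faithful unpacking of what the paper treats as obvious.
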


These definitions and result obviously also apply to section $\phi \in \Gamma(\ol\Sigma M)$ and vector fields $U_\phi\in\Gamma(TM)$ and  $V_\phi\in\Gamma(T\olM_M)$.

\subsection{The Cauchy problem for parallel spinors}\label{subsec.Cauchy.par.spin}

In this section we recall relevant parts of a series of recent articles
by Baum, Leistner and Lischewski \cite{baum.leistner.lischewski:16}, \cite{lischewski:15-preprint}, \cite{leistner.lischewski:19}, \cite{baum.leistner.lecture.notes:HH}. Parts of slightly modified results were proven in a simpler way by Seipel \cite{seipel:19} inspired by an oral communication by P. Chrusciel.

Let $(\overline{M}, \overline{g})$ be a time-oriented Lorentzian spin manifold with a spacelike hypersurface $M \subseteq \overline{M}$. Then $M$ inherits an induced spin structure and an induced Riemannian metric.
Assume, further, that $(\overline{M}, \overline{g})$ carries a non-trivial parallel spinor $\Phi$, then restriction to  $M$ yields a spinor $\phi = \Phi_{|M}$ with $\ol\nabla \phi =0$.
Equivalently, $\phi$ satisfies the so-called \emph{imaginary W-Killing spinor equation}
\begin{align}\label{gen.imag.killing}
	\nabla_X\phi  = -\frac{1}{2}e_0 \cdot W(X) \cdot \phi = \frac i2  W(X) \bigcdot \phi,
\end{align}
for all $X\in TM$, where again $W = -K^\sharp$ is obtained from the second fundamental form $K$ of the embedding.

\begin{definition}\label{def.initial.data.triples}
An \emph{initial data triple} $(g,K,\phi)$ on $M$ consists of a Riemannian metric $g$, a symmetric $2$-tensor $K$ and a nowhere vanishing spinor $\phi\in \Gamma(\ol\Sigma M)$ solving~\eqref{gen.imag.killing}.
We say that the triple  $(g,K,\phi)$ is lightlike (or timelike), if the Lorentzian Dirac current  $V_\phi$ -- defined by \eqref{def-Lorentz-Dirac-current} -- of $\phi \in \Gamma(\ol\Sigma M)$ is lightlike (or timelike) everywhere.
(By Lemma \ref{Lem:VphiParallel} below, if this holds in one point, then it is true on the whole connected component.)
\end{definition}
Thus any Lorentzian manifold with a parallel spinor induces an initial data triple on any spacelike hypersurface.

The Cauchy problem for parallel spinors asks for the converse of this:
Given a spin manifold $M$ along with an initial data triple $(g,K,\phi)$, is it possible to embed~$M$ into a time-oriented  Lorentzian spin manifold
$(\overline{M}, \overline{g})$ with parallel spinor $\Phi$ such that $g$, $K$,  and $\phi$ are induced on $M$ in the way described above?

\begin{lemma} \label{Lem:VphiParallel}
Assume that $V_\phi$ is the Lorentzian Dirac current of a $\ol\nabla$-parallel spinor $\phi \in \Gamma(\ol\Sigma M)$ along~$M$ (\ie \eqref{gen.imag.killing} holds on~$M$). Then $V_\phi$ is $\ol\nabla$-parallel along~$M$. Equivalently,
\begin{equation} \label{eq:VectorConstr}
\begin{aligned}
	\nabla U_\phi &= -u_\phi W \\
	du_\phi &= K(U_\phi, -).
\end{aligned}
\end{equation}
\begin{proof}
  Recall that the indefinite inner product $\llangle \Phi, \Psi \rrangle = \<e_0 \cdot \Phi, \Psi \>$ is $\ol\nabla$-parallel, cf.\ the discussion following \eqref{eq:InnerProduct}.  Thus for any $X \in \Gamma(T\ol{M}_{|M})$ and $Y \in TM$, we have
\begin{align*}
	\ol g(\ol\nabla_Y V_\phi, X) &= \del_Y \ol g(V_\phi,X) - \ol g(V_\phi, \ol\nabla_Y X) \\
							&= -\del_Y \bigl\llangle X \cdot \phi, \phi \bigr\rrangle + \bigl\llangle (\ol\nabla_Y X) \cdot \phi,\phi \bigr\rrangle \\
							&= - \bigl\llangle X \cdot \ol\nabla_Y \phi, \phi \bigr\rrangle - \bigl\llangle X \cdot \phi, \ol\nabla_Y \phi \bigr\rrangle =0.
\end{align*}
This implies that $V_\phi$ is $\ol\nabla$-parallel along $M$. The system of equations \eqref{eq:VectorConstr} is a simple reformulation of $\ol\nabla V_\phi=0$ using \eqref{UV.equation}.
\end{proof}
\end{lemma}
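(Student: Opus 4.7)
The plan is to establish the main parallelism statement $\ol\nabla V_\phi = 0$ along $M$ directly from the parallelism of the structures on $\ol\Sigma M$, and then unpack it into the system \eqref{eq:VectorConstr} via the Gauss--Weingarten decomposition.

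\textbf{Step 1: $\ol\nabla V_\phi=0$.} I differentiate the defining identity $\ol g(V_\phi, X) = -\llangle X \cdot \phi, \phi \rrangle$ from \eqref{def-Lorentz-Dirac-current} in direction $Y \in TM$ against an arbitrary test section $X \in \Gamma(T\ol M_{|M})$. On the left, metricity of $\ol g$ produces a term $\ol g(\ol\nabla_Y V_\phi, X)$ plus a term $\ol g(V_\phi, \ol\nabla_Y X)$. On the right, the indefinite Hermitian product $\llangle -, - \rrangle$ is $\ol\nabla$-parallel (see Subsection~\ref{sec:HypSpinBun}) and Clifford multiplication is $\ol\nabla$-parallel by construction, so the Leibniz rule produces one term involving $\ol\nabla_Y X$ (which matches and cancels the analogous one on the left by the very definition of $V_\phi$) and two terms involving $\ol\nabla_Y \phi$. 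The imaginary $W$-Killing hypothesis \eqref{gen.imag.killing} is, via \eqref{eq:ConnBtwConns}, exactly the statement $\ol\nabla_Y\phi=0$ along $M$, so these last two terms vanish. Thus $\ol g(\ol\nabla_Y V_\phi, X) = 0$ for all $X$, giving $\ol\nabla_Y V_\phi = 0$ for every $Y\in TM$.

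\textbf{Step 2: Equivalence with \eqref{eq:VectorConstr}.} I use the decomposition $V_\phi = -U_\phi + u_\phi e_0$ from \eqref{UV.equation} and the standard identities $\ol\nabla_Y U_\phi = \nabla_Y U_\phi + K(Y, U_\phi)\, e_0$ (Gauss formula, valid since $U_\phi\in \Gamma(TM)$) together with $\ol\nabla_Y e_0 = -W(Y)$ (the defining equation for $W$). Collecting the $TM$- and $\RR e_0$-components gives
$$\ol\nabla_Y V_\phi \;=\; \bigl(-\nabla_Y U_\phi - u_\phi W(Y)\bigr) \;+\; \bigl(Y(u_\phi) - K(Y, U_\phi)\bigr)\, e_0.$$
Vanishing of the tangential component is $\nabla U_\phi = -u_\phi W$, and vanishing of the normal coefficient is $du_\phi = K(U_\phi, -)$, which is precisely \eqref{eq:VectorConstr}.

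\textbf{Main obstacle.} There is no serious technical obstacle: once the parallelism of Clifford multiplication and of $\llangle -, - \rrangle$ on $\ol\Sigma M$ is in hand, the computation in Step~1 is essentially forced, and Step~2 is pure linear algebra of the Gauss--Weingarten splitting. The only point requiring care is the bookkeeping of signs in \eqref{def-Lorentz-Dirac-current} and in the conventions $W = -\ol\nabla e_0$, $K = -W^\flat$, so that the two components of $\ol\nabla V_\phi = 0$ reproduce \eqref{eq:VectorConstr} with the signs shown.
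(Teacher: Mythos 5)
Your proof is correct and takes essentially the same route as the paper: Step~1 is the identical Leibniz computation differentiating $\ol g(V_\phi,X)=-\llangle X\cdot\phi,\phi\rrangle$ and invoking parallelism of $\llangle-,-\rrangle$ and of Clifford multiplication together with $\ol\nabla\phi=0$, and Step~2 merely spells out the Gauss--Weingarten bookkeeping that the paper compresses into the phrase ``simple reformulation of $\ol\nabla V_\phi=0$ using \eqref{UV.equation}''. The sign checks against $W=-\ol\nabla e_0$ and $K=-W^\flat$ are done correctly.
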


This lemma allows for the following strategy for solving the Cauchy problem for parallel spinors:
First, for the data $(g, K, U_\phi, u_\phi)$ subject to the constraint \eqref{eq:VectorConstr}, solve the (analogous) Cauchy problem for parallel vector fields.
Then try to parallely extend the spinor onto the obtained Lorentzian manifold.

This program has been carried through by Baum, Leistner and Lischewski \cite{lischewski:15-preprint}, \cite{leistner.lischewski:19}, \cite{baum.leistner.lischewski:16} in the case, where the Lorentzian Dirac current $V_\phi$ is lightlike everywhere.
Note that the spin contraints considered by these authors typically consist of \eqref{gen.imag.killing} along with 
\begin{equation} \label{eq:SpinConstr2}
	\begin{aligned}
		U_\phi \bigcdot \phi &= i u_\phi \phi \\
		u_\phi &= \|U_\phi\|.
	\end{aligned}
\end{equation}
These two equations, however, are just equivalent to the condition that $V_\phi = -U_\phi + u_\phi e_0$ is lightlike, as $V_\phi \cdot \phi = 0$ in this case, see Lemma~\ref{lem.lorentz-dirac.current.annih}.

\begin{corollary}\label{cor:contraints.valid}
If $(g,K,\phi)$ is a light initial data triple, then the Dirac current $U_\phi$ and the function $u_\phi:=\|\phi\|^2$ satisfy \eqref{eq:SpinConstr2}.
\end{corollary}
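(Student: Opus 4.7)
The strategy is to extract both identities in \eqref{eq:SpinConstr2} from the single hypothesis that $V_\phi$ is lightlike. The key input is Lemma~\ref{lem.lorentz-dirac.current.annih}, which gives $V_\phi \cdot \phi = 0$ pointwise on $M$ when $V_\phi$ is lightlike, together with the orthogonal decomposition $V_\phi = -U_\phi + u_\phi e_0$ from \eqref{UV.equation}.

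For the Clifford identity $U_\phi \bigcdot \phi = i u_\phi \phi$, I would rearrange $V_\phi \cdot \phi = 0$ as
\[
U_\phi \cdot \phi \;=\; u_\phi\, e_0 \cdot \phi,
\]
and then multiply by $i e_0$ from the left. The Clifford convention fixed by Subsection~\ref{sec:HypSpinBun} (compatibly with $\ol g(e_0,e_0) = -1$ and with the identity $\|V_\phi \cdot \phi\|^2 = -\ol g(V_\phi,V_\phi)\|\phi\|^2$ of \eqref{Vphi-cdot-phi}) forces $e_0 \cdot e_0 = 1$, so the right hand side collapses to $i u_\phi \phi$, while the left hand side is $U_\phi \bigcdot \phi$ by the very definition $Y \bigcdot \phi = i e_0 \cdot Y \cdot \phi$ introduced just after \eqref{eq:ConnBtwConns}.

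For the norm equation $u_\phi = \|U_\phi\|$, I would simply expand $\ol g(V_\phi, V_\phi) = 0$ using that $U_\phi \in TM$ is orthogonal to $e_0$ and that $\ol g(e_0, e_0) = -1$; this gives $\|U_\phi\|^2 - u_\phi^2 = 0$, and taking the positive square root is legitimate because $u_\phi = \|\phi\|^2 \geq 0$. I do not anticipate any genuine obstacle: both identities are algebraic consequences of the lightlikeness of $V_\phi$ once \eqref{UV.equation}, \eqref{Vphi-cdot-phi} and the definition of $\bigcdot$ are assembled; the only care needed is in tracking the sign of $e_0^2$, and this is already pinned down by the conventions of Section~\ref{sec.parallel-spinors.hypersurfaces}.
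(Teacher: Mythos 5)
Your proof is correct and is essentially the argument the paper intends: it extracts both equations of \eqref{eq:SpinConstr2} from $V_\phi\cdot\phi=0$ (Lemma~\ref{lem.lorentz-dirac.current.annih}) together with the decomposition \eqref{UV.equation}, using $e_0^2 = 1$ and the definition of $\bigcdot$ for the Clifford identity and the expansion of $\ol g(V_\phi,V_\phi)=0$ for the norm identity. The paper does not write out these steps, relying instead on the sentence preceding the corollary, so your proposal simply fills in the details along the same route.
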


The case of timelike $V_\phi$ does not play a major role in the work by Baum, Leistner and Lischewski, as their motivation came from special holonomy, and
in this case the holonomy group coincides with the holonomy group of the Riemannian factor: the parallel timelike vector field implies that the Lorentzian manifold is locally a product of a Riemannian manifold with $(\RR,-dt^2)$.

As it does, however, play an important role here, we are interested in solving the Cauchy problem in this case as well.
Whereas the original work of Baum, Leistner, Lischewski involves sophisticated techniques for solving the occuring partial differential equations, it was later observed by Chruściel that a solution of the Cauchy problem can in fact be explicitly written down.
This argument appeared first in \cite{seipel:19} and it fortunately immediately extends to the timelike case. Note that in the following we turn covariant tensors on $M$, \ie sections of $(T^*M)^{\otimes \ell}$ into covariant tensors on $M\times \RR$ by pullback under the projection $M\times \RR\to M$. 

\begin{theorem} \label{thm:SolCauchyP}
Let $(g,K)$ be an initial data set, $U \in \Gamma(TM)$ and $u \in C^\infty(M)$ a positive function such that \eqref{eq:VectorConstr} hold. Then
\begin{align*}
\overline{g} &= (\|U\|^2-u^2)\, dt^2 - dt \otimes g(U,-) - g(U,-) \otimes dt +g
\end{align*}
defines a Lorentzian metric on $M \otimes \RR$, where $t$ denotes the $\RR$-coordinate, with the following properties:
\begin{itemize}
\item $M \times \{t\}$ is a spacelike hypersurface for any $t \in \RR$.
	\item The induced initial data set on $M \times \{t\}$ is $(g,K)$, when the time-orientation (determining the sign of $K$) is chosen such that $\del_t$ is future-pointing.
	\item $\del_t = -U + u e_0$, where $e_0$ denotes the future-pointing unit normal on $M \times \{t\}$.
	\item $\del_t$ is $\overline{\nabla}$-parallel.
\end{itemize}
Moreover, if $\phi \in \Gamma(\ol\Sigma M)$ is a nowhere vanishing\footnote{It follows from $\ol\nabla \phi = 0$ that this is the case as long as for any component $M_1$ of  $M$ we have $\phi|_{M_1}\not\equiv 0$.} spinor on $M$  satisfying \eqref{gen.imag.killing}, then the Lorentzian manifold $(M \times \RR, \overline{g})$ for $U = U_\phi$ and $u = u_\phi$ admits a $\overline{\nabla}$-parallel spinor extending $\phi$ and whose Dirac flow is $\del_t$.
\end{theorem}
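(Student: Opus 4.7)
The plan is to handle the geometric statement and the spinor extension in turn, with the constraint equations \eqref{eq:VectorConstr} driving every computation. For the geometric part I would start with three direct computations from the defining formula for $\ol g$: since $dt(U)=0$ and $g$ is pulled back from $M$, one immediately obtains $\ol g|_{TM\otimes TM}=g$, $\ol g(\del_t+U,Y)=0$ for all $Y\in TM$, and $\ol g(\del_t+U,\del_t+U)=-u^2$. Combined with $u>0$, this verifies in one stroke that $\ol g$ is Lorentzian of signature $(n,1)$, that each slice $M\times\{t\}$ is spacelike with induced metric $g$, and that its future-pointing unit normal is $e_0=u^{-1}(\del_t+U)$; rearranging gives $\del_t=-U+ue_0$.

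The crux is then to establish $\ol\nabla\del_t=0$. My plan is to use the criterion \emph{parallel is equivalent to Killing plus closed dual one-form}. Since every coefficient of $\ol g$ in the product coordinates $(t,x^1,\dots,x^n)$ is $t$-independent, $\maL_{\del_t}\ol g=0$, so $\del_t$ is Killing. The dual one-form $\alpha\definedas\ol g(\del_t,\platz)=(\|U\|^2-u^2)\,dt-g(U,\platz)$ has exterior derivative $d(\|U\|^2-u^2)\wedge dt-d(g(U,\platz))$, and both summands must vanish separately for type reasons. A short computation combining $\nabla U=-uW$, $du=K(U,\platz)$ and the symmetry $K(X,Y)=K(Y,X)$ shows that both indeed vanish, so $\del_t$ is $\ol\nabla$-parallel. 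The induced second fundamental form of $M\times\{t\}$ then follows by differentiating $ue_0=\del_t+U$: the identity $u\,\ol\nabla_Xe_0=\ol\nabla_XU-X(u)e_0$ combined with the Gauss formula for $M\times\{t\}\subseteq M\times\RR$ yields $u\,\ol\nabla_Xe_0|_{TM}=\nabla_XU=-uW(X)$, so the induced Weingarten map coincides with $W$ and the induced second fundamental form equals $K$.

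For the spinor statement I would extend $\phi$ by requiring $\ol\nabla_{\del_t}\Phi=0$ with $\Phi|_{M\times\{0\}}=\phi$. The key point is that parallelism of $\del_t$ forces $R(X,Y)\del_t=0$, which by the pair symmetry of the Riemann tensor promotes to $R(\del_t,\platz)=0$ as an endomorphism of $T(M\times\RR)$; as the spin curvature is a Clifford-algebra valued version of $R$, this also gives $R^\Sigma(\del_t,\platz)=0$ on spinors. Lifting $X$ from $M$ to be $t$-independent so that $[X,\del_t]=0$ and setting $\Psi_X\definedas\ol\nabla_X\Phi$, one obtains $\ol\nabla_{\del_t}\Psi_X=R^\Sigma(\del_t,X)\Phi=0$, so $\Psi_X$ is parallel along every $\del_t$-orbit. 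On $M\times\{0\}$ the identification of the hypersurface spinor bundle together with \eqref{eq:ConnBtwConns} converts the imaginary $W$-Killing equation \eqref{gen.imag.killing} into $\ol\nabla_X\phi=0$, hence $\Psi_X|_{M\times\{0\}}=0$ and therefore $\Psi_X\equiv 0$. Together with $\ol\nabla_{\del_t}\Phi=0$, this gives $\ol\nabla\Phi=0$ on all of $M\times\RR$. Finally, $V_\Phi$ is $\ol\nabla$-parallel by Lemma~\ref{Lem:VphiParallel} and equals $-U_\phi+u_\phi e_0=\del_t$ on $M\times\{0\}$ by \eqref{UV.equation} together with the choice $U=U_\phi$, $u=u_\phi$, so $V_\Phi=\del_t$ everywhere.

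I expect the main obstacle to be the closedness of $\alpha$: this is the one place where both halves of \eqref{eq:VectorConstr} are used essentially, and where sign conventions between $W$, $K$ and the choice of future-pointing normal need to match exactly. Once $\del_t$ is known to be parallel, everything else --- induced data, parallel extension of the spinor, identification of the Dirac flow with $\del_t$ --- is a routine consequence of the propagation-of-parallelism mechanism powered by $R(\del_t,\platz)=0$.
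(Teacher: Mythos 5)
Your proof is correct, and the metric part takes a genuinely different and cleaner route than the paper. The paper first verifies that $e_0=\frac{1}{u}(U+\del_t)$ is the unit normal, then computes the second fundamental form by an explicit evaluation of $\frac12\maL_{e_0}\ol g$ on $TM\otimes TM$, and only afterwards shows $\ol\nabla_X\del_t=0$ via the Gauss and Weingarten formulas using the just-computed $K$; a separate computation handles $\ol\nabla_{\del_t}\del_t$. You reverse the logic: $\del_t$ is Killing for free because every coefficient of $\ol g$ in the coordinates $(t,x^1,\dots,x^n)$ is $t$-independent, and the dual one-form $\alpha=\ol g(\del_t,\platz)=(\|U\|^2-u^2)\,dt-U^\flat$ is closed because $\nabla U=-uW$ makes $\nabla U^\flat=uK$ symmetric (so $dU^\flat=0$) and because $d\|U\|^2=2uK(U,\platz)=2u\,du$ kills the $dt$-leg; the criterion \emph{parallel} $\iff$ \emph{Killing and closed} then gives $\ol\nabla\del_t=0$ outright, after which $K$ drops out of $\ol\nabla_X(ue_0)=\ol\nabla_X U$ by the Gauss formula. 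One slight phrasing issue: ``both summands must vanish separately for type reasons'' should read that the two pieces of $d\alpha$ live in complementary parts of $\Lambda^2 T^*(M\times\RR)$ so $d\alpha=0$ forces each to vanish, and the constraints then deliver both vanishings; you do say this, but the sentence order invites misreading.

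For the spinor extension the underlying mechanism is the same as the paper's --- flatness of $\ol\nabla$ in the $\del_t$-direction on every relevant bundle, coming from $\ol\nabla\del_t=0$ and the pair symmetry $R(X,Y,\del_t,Z)=0\Rightarrow R(\del_t,Z,X,Y)=0$ --- but the packaging differs. The paper builds a connection-preserving isomorphism of $P_{\SO_0(n,1)}$-principal bundles with the pullback from $M\times\{0\}$, lifts it to the spin structure and spinor bundle, and defines the extension operator $E$. You instead work directly on the spinor bundle: extend $\phi$ by $\ol\nabla_{\del_t}$-parallel transport, set $\Psi_X=\ol\nabla_X\Phi$ for $t$-constant lifts $X$, show $\ol\nabla_{\del_t}\Psi_X=R^\Sigma(\del_t,X)\Phi=0$ using $R(\del_t,\platz)=0$, and propagate $\Psi_X|_{t=0}=\ol\nabla_X\phi=0$ to all $t$. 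This is more elementary and avoids chasing the principal-bundle diagram; the paper's version makes more transparent which spin structure on $M\times\RR$ is being used (a point you leave implicit, and should state). Your identification of the Dirac flow via parallelism of $V_\Phi$ (Lemma~\ref{Lem:VphiParallel}) and agreement with $\del_t$ at $t=0$ matches the paper.
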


Again the theorem holds independently on which definition of ``spinor'' is chosen. Note that we implicitly assume that $M\times \RR$ carries the unique spin structure whose restriction to $M\cong M\times\{0\}$ is the given spin structure on $M$.

\begin{proof}
  We start off by showing that $e_0 \coloneqq \frac{1}{u}\bigl(U + \del_t\bigr)$ is orthonormal on $M = M \times \{t\} \subseteq M \times \RR$.
  
From
	\begin{align*}
		\overline{g}(e_0,\platz) &= \left(\|U\|^2 - u^2\right) \frac{1}{u}\, dt - \frac{1}{u} g(U,\platz) - \frac{\|U\|^2}{u}\, dt + \frac{1}{u} g(U,\platz) = -u\, dt,
	\end{align*}
	it immedately follows that $\overline{g}(e_0,X)=0$ for all $X \in TM$ and $\overline{g}(e_0,e_0) = -1$.
	As, moreover, the restriction of $\overline{g}$ to $M \times \{t\}$ obviously yields the Riemannian metric $g$, we obtain that $\overline{g}$ is a Lorentzian metric and $M \times \{t\}$ is spacelike.
	
	We have to show that the induced second fundamental form on $M \times \{t\}$ is $K$.
	One way to calculate the second fundamental form is by the formula
        $\frac{1}{2} (\mathcal{L}_{e_0} \ol{g})|_{TM\otimes TM}$,
        where $e_0 = \frac{1}{u}(U + \del_t)$ is the future-directed unit normal of the foliation $(M \times \{t\})_{t \in \RR}$.
	For a vector field $X$ of $M \times \{t\}$, we denote by the same symbol its constant extension to $M \times \RR$, \ie the extension with $\mathcal{L}_{\del_t} X = 0$.
	In this case,
	\begin{align*}
		\mathcal{L}_{e_0} X = -[X, e_0]
	&= \frac{\del_X u}{u^2} \bigl(U + \del_t\bigr) - \frac{1}{u}\bigl[X, U + \del_t\bigr] \\
	&= \frac{\del_X u}{u} e_0 + W(X) + \frac{1}{u} \nabla_U X 
	\end{align*}
	using $\nabla_X U = -uW(X)$.
	Thus, we obtain for $X, Y \in \Gamma(TM)$
	\begin{align*}
		(\mathcal{L}_{e_0} \ol{g})(X, Y)
		&= \del_{e_0} (g(X,Y)) - g\bigl(\mathcal{L}_{e_0} X,Y\bigr) - g\bigl(X,\mathcal{L}_{e_0}Y\bigr) \\
		&= \frac{1}{u}\del_{U} (g(X,Y)) -  g\Bigl(W(X)+ \frac{1}{u}\nabla_U X, Y\Bigr) - g\Bigl(X, W(Y) +  \frac{1}{u}\nabla_U Y\Bigr)\\
		&= 2K(X,Y)
	\end{align*}
	as claimed.
	
	Now that we have calculated the second fundamental form, it is easy to show that $\del_t$ is parallel with respect to the Levi-Civita connection of $\overline{g}$:
	First of all, when $X \in TM$, then
	\begin{align}
		\ol\nabla_X \del_t = \ol\nabla_X (ue_0 - U)
		&= (\del_X u) e_0 + u \ol\nabla_X e_0 - \nabla_X U - K(X,U)e_0 \nonumber\\
		&= K(X,U) e_0 - uW(X) + uW(X) - K(X,U)e_0 = 0.\label{eq:partialt.par}
	\end{align}
	Using this, we obtain furthermore
	\begin{align*}
		\overline{g}(\ol\nabla_{\del_t} \del_t, X) &= \del_t \overline{g}(\del_t, X) - \overline{g}(\del_t, \ol\nabla_{\del_t}X) \\
		&= -\del_t g(U, X) - \overline{g}(\del_t, \mathcal{L}_{\del_t} X)
			- \overline{g}(\del_t, \ol\nabla_X \del_t) \\
                &= 0+0+0 = 0
	\end{align*}
	for a constant extension of some $X \in \Gamma(TM)$.	
	Together with
	\begin{align*}
		\overline{g}(\ol\nabla_{\del_t} \del_t, \del_t) = \frac{1}{2} \del_t \overline{g}(\del_t, \del_t) &= \frac{1}{2} \del_t \bigl(\|U\|^2-u^2\bigr) = 0,
	\end{align*}
	this implies that $\ol\nabla \del_t = 0$.
	It remains to show the claim concerning the parallel spinor.
	For this, note that the constant extension of a vector field $X$ is parallel in $t$-direction, as \eqref{eq:partialt.par} implies that $\ol\nabla_{\del_t} X= \ol\nabla_{\del_t} X-\ol\nabla_X \del_t  = \mathcal{L}_{\del_t} X= 0$.
        Thus, if $(e_0,e_1,\ldots,e_n)$ is a positively space- and time-oriented
	  generalized orthonormal frame
          of $(T_{(x,0)} (M \times \RR),\olg_{(x,0)})$, then we can identify it with the same kind of frame at $(x,t)$, denoted as $(e_0,e_1,\ldots,e_n)_t$, and the path
          $t\mapsto (e_0,e_1,\ldots,e_n)_t\in P_{\SO_0(n,1)}(M \times \RR,\olg)|_{(x,t)}$ is parallel. We obtain an isomorphism of $P_{\SO_0(n,1)}$-principal bundles preserving the  connection-$1$-form
          \begin{eqnarray*}
            P_{\SO_0(n,1)}(M \times \RR)_{|M\times \{0\}} \times \RR &\longrightarrow &P_{\SO_0(n,1)}(M \times \RR,\olg)\\
             \bigl((e_0,e_1,\ldots,e_n),t\bigr) &\longmapsto&  (e_0,e_1,\ldots,e_n)_t
          \end{eqnarray*}
          This map yields a canonical isomorphism of principal bundles with connection-$1$-form between 
          $P_{\SO_0(n,1)}(M \times \RR,\olg)$ and the pull-back of  $P_{\SO_0(n,1)}(M \times \RR)_{|M\times \{0\}}$ under the projection map $M\times \RR\to M\times\{0\}$.
          This canonical isomorphism lifts to the spin structure, and then to the spinor bundle. We obtain an isomorphism
          $$E:\Gamma(\ol\Sigma M) = \Gamma(\Sigma \ol M_{|M}) \to \{\phi\in\Gamma(\Sigma\overline M)\mid \ol\nabla_{\del_t} \Phi =0\}$$
          which is a right inverse to restriction, and which satisfies $E(\ol\nabla_X \psi)= \ol\nabla_XE( \psi)$ for all $X\in TM$. For $\psi=\phi$, we see that $E(\phi)$ is a parallel extension of $\phi$ to $M\times \RR$.

          The statement about the Dirac flow follows from $V_\phi = -U_\phi + u_\phi e_0 = \del_t$.
\end{proof}

\section{The kernel of the Dirac--Witten operator}\label{sec.kernel.DWo}

The main goal of the article is to detect non-trivial homotopy groups in $\Dec(M)$ using index theory. For this it is of crucial importance to find sufficient criteria for~$M$ and~$(g,K)\in \Dec(M)$ for the Dirac--Witten operator $\Dirac^{(g,K)}$ to be invertible.

\subsection{Dirac--Witten operators and Schrödinger--Lichnerowicz formula}

Let us summarize some facts about the Dirac--Witten operator which essentially go back to \cite{witten:81}. For a coordinate free presentation we refer to \cite{parker.taubes:82} which provides sufficiently detailed proofs. The Schrö\-ding\-er--Lich\-ne\-ro\-wicz type formula also plays a major role in \cite{hijazi.zhang:03} and \cite{gloeckle:p2019}.

All references above only treat irrducible complex spinors.
The facts below also hold for other types of spinors, as the proofs do not rely on the particular choice of spinor module.
For $\Cl_{n,1}$-linear spinors, which are of a main interest to us, this is also an immediate consequence of Corollaries~\ref{cor:BundleIsos} and \ref{cor:ScalarProducts}.

Let $\ol\Sigma M$ be a hypersurface spinor bundle of $(M, g, K)$ as explained in Section \ref{sec:HypSpinBun}.
The Dirac-Witten operator $\olDirac$ of $\ol\Sigma M$ is defined similarly its Dirac operator $\Dirac$, but using the connection $\ol\nabla$ instead of $\nabla$, \ie by the local formula
\begin{align*}
	\olDirac \phi = \sum_{i=1}^n e_i \cdot \ol\nabla_{e_i} \phi,
\end{align*}
where $(e_1, \ldots, e_n)$ is a local orthonormal frame of $TM$.
A short calculation using \eqref{eq:ConnBtwConns} provides that Dirac-Witten and Dirac operator are related by the formula
\begin{align} \label{eq:ConnBtwDiracs}
	\olDirac \phi = \Dirac \phi - \frac12 \tr(K) e_0 \cdot \phi,
\end{align}
keeping in mind that $\tr(K) = -\tr(W)$ by our sign conventions.
From this formula, we can see that $\olDirac$ shares the principal symbol of $\Dirac$, and thus is a generalized Dirac operator. 
Moreover, as $e_0$ is self-adjoint w.\,r.\,t.~the positive definite scalar product, $\olDirac$ is formally self-adjoint.
Thus many functional analytic properties of Dirac operators, \eg discreteness of the spectrum, also apply to $\olDirac$. 
It is hence also not surprising that the Dirac--Witten operator satisfies a Schrö\-ding\-er--Lich\-ne\-ro\-wicz type formula, which reads
	\begin{align} \label{eq:SchrL}
	\olDirac^2 &= \overline\nabla^*\overline\nabla + \frac12 (\rho-e_0 \cdot j^\sharp \cdot), \\
	\intertext{with} \nonumber
	2\rho &= \scal + (\tr K)^2 - \|K\|^2 \\ \nonumber
	j &= - \mathrm{d} (\tr K) + \div K.
	\end{align}
This formula can be either deduced from the Schrödinger-Lichnerowicz formula of~$\Dirac$ using \eqref{eq:ConnBtwConns} and \eqref{eq:ConnBtwDiracs}, or checked by a straightforward calculation, which is done in \cite[Sec.~3]{parker.taubes:82}.

\subsection{From the kernel of the Dirac--Witten operator to initial data triples}
\label{sec:KernelDW}

The goal of this paragraph is to investigate the kernel of the Dirac-Witten operator on a closed manifold $M$. This will contribute to our efforts in this and the following subsections to find obstructions against the existence of non-zero harmonic Dirac-Witten spinors, \ie spinors $\phi\not\equiv 0$ with $\olDirac \phi = 0$.

\begin{theorem}\label{theorem.kernel-gives-initial-data}
Let $M$ be a closed spin manifold of dimension $n$. We assume that the initial data set $(g,K)$ satisfies the dominant energy condition, i.e. $\rho\geq |j|$.
Then for any $\phi\in \ker\olDirac$ the triple $(g,K,\phi)$ satisfies the constraint equation \eqref{gen.imag.killing} of the Cauchy problem for a parallel spinor.
Moreover, $(\rho e_0 - j^\sharp) \cdot \phi = 0$ in this case.
\end{theorem}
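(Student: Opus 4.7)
The natural strategy is to test the Schrödinger--Lichnerowicz formula \eqref{eq:SchrL} against $\phi$ and integrate over the closed manifold $M$. Since $\overline\nabla$-connection Laplacian yields a non-negative term $\int_M |\overline\nabla\phi|^2$ after integration by parts (using that $M$ is closed), the integral identity
\[
0 \;=\; \int_M \langle \olDirac^2 \phi, \phi\rangle \;=\; \int_M |\overline\nabla \phi|^2 \;+\; \tfrac12 \int_M \bigl\langle (\rho - e_0 \cdot j^\sharp\cdot)\phi,\phi\bigr\rangle
\]
will force both summands to vanish, provided that the zero-order operator $\rho - e_0\cdot j^\sharp\cdot$ is pointwise non-negative with respect to the definite scalar product $\langle\cdot,\cdot\rangle$. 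The vanishing of the first summand will give $\overline\nabla\phi=0$, which is exactly \eqref{gen.imag.killing}, while the vanishing of the second -- being the integral of a pointwise non-negative function -- will give the algebraic identity.

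The main technical point is therefore to analyse the endomorphism $A := \rho\cdot\id - e_0\cdot j^\sharp\cdot$ of $\overline\Sigma M$. First I would check that it is self-adjoint with respect to $\langle\cdot,\cdot\rangle$: the involution $e_0$ is self-adjoint (as noted after \eqref{eq:InnerProduct}) while spacelike Clifford multiplication is anti-self-adjoint with respect to $\langle\cdot,\cdot\rangle = \llangle e_0\cdot,\cdot\rrangle$ (from \eqref{eq:ClMultInnerProd} combined with the definition of $\langle\cdot,\cdot\rangle$), so their composition $e_0\cdot j^\sharp\cdot$ is self-adjoint. Next, since $e_0\perp TM$, the elements $e_0$ and $j^\sharp$ anticommute, so $(e_0\cdot j^\sharp)^2 = -e_0^2\cdot(j^\sharp)^2 = -1\cdot(-\|j\|^2) = \|j\|^2$; thus $e_0\cdot j^\sharp\cdot$ has eigenvalues $\pm\|j\|$. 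Consequently $A$ has eigenvalues $\rho\pm\|j\|$, which are all non-negative by the dominant energy condition $\rho\geq\|j\|$.

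Combining this with the integral identity, both $\int_M|\overline\nabla\phi|^2$ and $\int_M \langle A\phi,\phi\rangle$ vanish. The first gives $\overline\nabla\phi\equiv 0$, i.e.\ the imaginary $W$-Killing equation \eqref{gen.imag.killing}. For the second, since $A\geq 0$ pointwise, the integrand $\langle A\phi,\phi\rangle$ is non-negative; having zero integral it vanishes pointwise, and since $A$ is a non-negative self-adjoint endomorphism, $\langle A\phi,\phi\rangle=0$ forces $A\phi=0$ (passing to the square root, or equivalently writing $\langle A\phi,\phi\rangle$ as a sum of squares in an eigenbasis). Finally, $(\rho - e_0\cdot j^\sharp\cdot)\phi = 0$ is equivalent to $(\rho e_0 - j^\sharp)\cdot\phi=0$ by Clifford-multiplying with $e_0$ (which is invertible with $e_0\cdot e_0 = 1$).

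The only genuine subtlety I foresee is bookkeeping the sign conventions: depending on whether $e_0\cdot e_0 = +1$ or $-1$ in the chosen Clifford convention, and on the sign in \eqref{eq:ClMultInnerProd} for $k=1$, the self-adjointness of $e_0\cdot j^\sharp\cdot$ and the computation $(e_0\cdot j^\sharp)^2 = +\|j\|^2$ must line up consistently with the sign of $j$ appearing in \eqref{eq:SchrL}. Once these conventions are fixed once and for all, the argument is a direct $L^2$-pairing computation and no global analysis beyond standard integration by parts on a closed manifold is required.
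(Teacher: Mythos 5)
Your proof is correct and follows essentially the same route as the paper: the Schrödinger--Lichnerowicz formula \eqref{eq:SchrL}, the $L^2$-pairing against $\phi$ with integration by parts on the closed manifold, and the pointwise non-negativity of the zero-order term under DEC to force $\ol\nabla\phi=0$. The only cosmetic difference is in the second assertion: the paper, having established $\ol\nabla\phi=0$ and hence $\ol\nabla^*\ol\nabla\phi=0$, simply reads off $(\rho - e_0\cdot j^\sharp)\phi=0$ from $\olDirac^2\phi=0$, whereas you derive it via the pointwise vanishing of $\langle A\phi,\phi\rangle$ together with non-negativity and self-adjointness of $A$; both are valid and equally short.
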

Thus, under the assumtions of the theorem, $(g,K,\phi)$ is an initial data triple, as defined in Definition~\ref{def.initial.data.triples}.

\begin{proof}
We calculate for an arbitrary spinor $\phi$
  $$\<\left(\rho-e_0\cdot j^\sharp\right)\cdot\phi,\phi\>\geq (\rho -\|e_0\|\,\|j^\sharp\|)\|\phi\|^2\geq 0.$$
  For a harmonic Dirac--Witten spinor, the Schrödinger--Lichnerowicz formula \eqref{eq:SchrL} then implies
  $$0=\int_M\<\ol\Dirac^2\phi,\phi\>\geq\int_M\<\ol\nabla^*\ol\nabla\phi,\phi\>.$$
As $M$ is assumed to be closed, this yields $\ol\nabla \phi=0$ via partial integration,
which in turn is equivalent to \eqref{gen.imag.killing}.
The second assertion then follows immediately from $0=\olDirac^2 \phi = \ol\nabla^* \ol\nabla \phi + \frac 12 (\rho- e_0 \cdot j^\sharp \cdot) \phi$.
\end{proof}

\begin{lemma} \label{Lem:DiracCurrent}
Assuming the setup of Theorem \ref{theorem.kernel-gives-initial-data}, let  $u_\phi:=\|\phi\|^2$ and let $U_\phi$ be the Riemannian Dirac current of $\phi$, see \eqref{def-Riemann-Dirac-current}. Then $\rho e_0 - j^\sharp$ and $V_\phi \coloneqq u_\phi e_0 - U_\phi$ are linearly dependent.
\end{lemma}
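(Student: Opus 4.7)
The starting point is the identity $(\rho e_0 - j^\sharp) \cdot \phi = 0$ furnished by Theorem~\ref{theorem.kernel-gives-initial-data}. Writing $Z \definedas \rho e_0 - j^\sharp \in \Gamma(T\ol M_{|M})$, the plan is to extract linear dependence of $Z$ and $V_\phi$ from this Clifford annihilation relation by combining three ingredients: the Clifford identity $Z \cdot Z = -\ol g(Z,Z)$; the definition~\eqref{def-Lorentz-Dirac-current} of $V_\phi$ as the $\ol g$-dual of $X \mapsto -\llangle X \cdot \phi, \phi \rrangle$; and a standard Minkowski linear algebra fact about causal vectors. Since $\phi$ is $\ol\nabla$-parallel by Theorem~\ref{theorem.kernel-gives-initial-data} and parallel transport is invertible, $\phi$ is either identically zero or nowhere vanishing on each connected component; I would therefore work pointwise and assume $\phi \neq 0$ (at points where $\phi$ vanishes, $V_\phi$ vanishes too and the statement is trivial).

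First I would apply $Z$ on the left to both sides of $Z \cdot \phi = 0$, obtaining $-\ol g(Z,Z)\phi = 0$ and hence $\ol g(Z,Z) = 0$. As $j^\sharp \in TM$ is orthogonal to $e_0$, this simplifies to $\rho^2 = \|j\|^2$, which together with the DEC inequality $\rho \geq \|j\| \geq 0$ shows $Z$ to be future-pointing and $\ol g$-lightlike. Second, I would pair the same relation with $\phi$ under the indefinite Hermitian product: by the definition of the Lorentzian Dirac current,
\[
  \ol g(V_\phi, Z) \;=\; -\llangle Z \cdot \phi, \phi \rrangle \;=\; 0.
\]

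The concluding step is the elementary observation that a future-causal vector $\ol g$-orthogonal to a future-lightlike vector must be a non-negative multiple of that vector. (In a Lorentz frame with $Z$ proportional to $e_0 + e_1$, the orthogonality relation equates the $e_0$- and $e_1$-components of $V_\phi$, and the causality inequality $\ol g(V_\phi, V_\phi) \leq 0$ then forces the transverse components to vanish.) Applied to $V_\phi$, which is future-causal by Lemma~\ref{lem.lorentz-dirac.current.annih}, and to the future-lightlike $Z$ from the previous step, this yields $V_\phi = \lambda Z$ for some $\lambda \geq 0$; in the remaining case $Z=0$, linear dependence is automatic. I do not expect any serious obstacle; the only thing to watch is that the sign conventions align so that DEC indeed makes $Z$ future-pointing and that~\eqref{def-Lorentz-Dirac-current} correctly translates the Clifford annihilation $Z \cdot \phi = 0$ into the $\ol g$-orthogonality $V_\phi \perp Z$.
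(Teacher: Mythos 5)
Your proof is correct, and it takes a genuinely different route from the paper's. The paper works with the positive definite inner product $\langle\cdot,\cdot\rangle$ and a test vector $X\in TM$: starting from $\rho e_0\cdot\phi = j^\sharp\cdot\phi$ (equivalent to $Z\cdot\phi=0$), it computes $g(\rho U_\phi, X)=\Re\langle X\cdot\phi,\rho e_0\cdot\phi\rangle=\Re\langle X\cdot\phi,j^\sharp\cdot\phi\rangle$ and then unfolds the real part via the anti-commutation $j^\sharp\cdot X+X\cdot j^\sharp=-2g(j^\sharp,X)$ to arrive directly at the identity $\rho U_\phi = u_\phi j^\sharp$, from which linear dependence of $\rho e_0-j^\sharp$ and $V_\phi$ follows by inspection. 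You instead square the annihilation relation, $Z\cdot Z\cdot\phi=-\ol g(Z,Z)\phi=0$, to deduce that $Z$ is null (equivalently $\rho=\|j\|$ wherever $\phi\neq 0$), then pair $Z\cdot\phi=0$ with $\phi$ under the indefinite product $\llangle\cdot,\cdot\rrangle$ to get $\ol g(V_\phi,Z)=0$, and finish with the Lorentzian linear algebra fact that a future-causal vector orthogonal to a future-null vector is a non-negative multiple of it. Both arguments are sound; yours is arguably more geometric, isolating the causal character of $Z$ and $V_\phi$ as the mechanism, and it makes the equality $\rho=\|j\|$ (in effect, the content of Corollary~\ref{cor:TlSpinorVac}) visible already at this stage, whereas the paper's calculation is more elementary and produces the sharper pointwise identity $\rho U_\phi=u_\phi j^\sharp$ directly. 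Your sign checks are right: with $\ol g(e_0,e_0)=-1$ and $j^\sharp\perp e_0$ one has $\ol g(Z,Z)=-\rho^2+\|j\|^2$, the Clifford relation $v\cdot v=-\ol g(v,v)$ gives the claimed vanishing, and DEC forces $\rho\ge\|j\|\ge 0$ so $Z$ is future-pointing or zero.
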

\begin{proof}
It suffices to show that $\rho U_\phi = u_\phi j^\sharp$.
For any $X \in TM$, we calculate
	\begin{align*}
		g(\rho U_\phi, X)
		&= \rho \Re \left(\langle e_0 \cdot X \cdot \phi,\, \phi \rangle \right) = \Re \left( \langle X \cdot \phi,\, \rho e_0 \cdot \phi \rangle \right) \\
		&= \Re \left( \langle X \cdot \phi,\, j^\sharp \cdot \phi \rangle \right) = \frac12 \left( \langle X \cdot \phi,\, j^\sharp \cdot \phi \rangle + \langle j^\sharp \cdot \phi,\, X \cdot \phi \rangle \right) \\
		&= -\frac12 \left(\langle j^\sharp \cdot  X \cdot \phi,\, \phi \rangle + \langle X \cdot j^\sharp \cdot \phi,\,  \phi \rangle \right) = g(j^\sharp, X) \|\phi\|^2 \\
		&= g(u_\phi j^\sharp, X)
	\end{align*}
and the assertion follows.
\end{proof}

From now on, we assume that $M$ is connected.
Otherwise, the following arguments apply componentwise.
Unless $V_\phi \equiv 0$, which is the case if and only if $\phi \equiv 0$, it follows from Lemma~\ref{Lem:VphiParallel} that $V_\phi$ is either lightlike everywhere or timelike everywhere.

\begin{corollary} \label{cor:TlSpinorVac}
If $M$ is a closed connected spin manifold, $(g, K)$ an initial data set on $M$ subject to the dominant energy condition and $\phi \in \ker \olDirac$ with $V_\phi$ timelike everywhere, then $\rho \equiv 0 \equiv j$.
\end{corollary}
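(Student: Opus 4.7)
The corollary is an essentially formal consequence of the ingredients already assembled, so my plan is to sketch how the three key facts fit together rather than introduce any new computation. The three ingredients are: the annihilation relation $(\rho e_0 - j^\sharp) \cdot \phi = 0$ from Theorem~\ref{theorem.kernel-gives-initial-data}, the proportionality encoded in Lemma~\ref{Lem:DiracCurrent}, and the norm identity \eqref{Vphi-cdot-phi}.

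First I would rule out any degeneration of $\phi$. By Lemma~\ref{Lem:VphiParallel} the vector field $V_\phi$ is $\ol\nabla$-parallel along $M$, hence its causal type is locally constant; combined with the hypothesis that $V_\phi$ is timelike and with the connectedness of $M$, this forces $V_\phi$ to be nowhere zero on $M$. In particular $\phi$ is nowhere zero, so $u_\phi = \|\phi\|^2$ is a strictly positive function on $M$.

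Second, I would rewrite the annihilation relation in terms of $V_\phi$. The proof of Lemma~\ref{Lem:DiracCurrent} records the pointwise identity $\rho U_\phi = u_\phi j^\sharp$. Dividing by the nowhere-vanishing function $u_\phi$, this yields
\begin{equation*}
\rho e_0 - j^\sharp = \frac{\rho}{u_\phi}\bigl(u_\phi e_0 - U_\phi\bigr) = \frac{\rho}{u_\phi}\, V_\phi,
\end{equation*}
so Theorem~\ref{theorem.kernel-gives-initial-data} becomes $\frac{\rho}{u_\phi}\, V_\phi \cdot \phi = 0$.

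Finally, I would invoke \eqref{Vphi-cdot-phi}, which gives $\|V_\phi \cdot \phi\|^2 = -\ol g(V_\phi, V_\phi)\,\|\phi\|^2$. Since $V_\phi$ is timelike and $\phi$ is nowhere zero, the right-hand side is strictly positive on all of $M$, so $V_\phi \cdot \phi$ is nowhere zero. This forces $\rho \equiv 0$; feeding $\rho = 0$ back into $\rho U_\phi = u_\phi j^\sharp$ and using $u_\phi > 0$ then gives $j^\sharp \equiv 0$, whence $j \equiv 0$. There is no genuine obstacle in the argument: the dominant energy condition enters only through the applicability of Theorem~\ref{theorem.kernel-gives-initial-data}, and the timelike-current hypothesis is precisely what makes pointwise Clifford multiplication by $V_\phi$ injective on $\phi$.
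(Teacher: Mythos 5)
Your proposal is correct and follows essentially the same route as the paper: both rely on the identity $\rho U_\phi = u_\phi j^\sharp$ from Lemma~\ref{Lem:DiracCurrent} to see that $\rho e_0 - j^\sharp$ is a multiple of $V_\phi$, on the annihilation $(\rho e_0 - j^\sharp)\cdot\phi = 0$ from Theorem~\ref{theorem.kernel-gives-initial-data}, and on the fact that Clifford multiplication by a timelike vector cannot annihilate a nonzero spinor. Your version just makes the scalar multiple $\rho/u_\phi$ explicit rather than arguing via the dichotomy ``timelike or zero'' versus ``lightlike or zero.''
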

\begin{proof}
From the previous lemma, we get that $\rho e_0 - j^\sharp$ is either $0$ or timelike, being a multiple of $V_\phi$.
On the other hand, as $\phi$ is nowhere vanishing, it follows from $(\rho e_0 - j^\sharp) \cdot \phi = 0$ that $\rho e_0 - j^\sharp$ is either $0$ or lightlike.
So $\rho e_0 - j^\sharp = 0$.
\end{proof}

We get the following obstructions to the existence of nowhere vanishing Dirac-Witten harmonic spinors.
\begin{theorem}\label{thm:WdecInv-old}
Let $M$ be a closed connected spin manifold with a timelike initial data triple $(g, K,\phi)$. 
Then $M$ carries a Ricci-flat metric with a parallel spinor.
\end{theorem}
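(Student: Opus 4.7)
The plan is to feed $(g,K,\phi)$ into the Cauchy‐problem solution, Theorem~\ref{thm:SolCauchyP}, and then extract a Ricci‐flat Riemannian metric on $M$ from the resulting parallel timelike structure. That theorem yields a Lorentzian spin manifold $(\overline M,\overline g)=(M\times\RR,\overline g)$ admitting a nowhere vanishing parallel spinor $\overline\phi$ whose Dirac flow is the parallel vector field $\del_t=-U_\phi+u_\phi e_0$. By the timelike assumption $\del_t$ is timelike, and parallelism on the connected manifold $M\times\RR$ forces $\overline g(\del_t,\del_t)=\|U_\phi\|^2-u_\phi^2$ to be a (negative) constant; I write it as $-c^2$ with $c>0$.

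The natural candidate on $M$ is
\begin{equation*}
  h \definedas g + \tfrac{1}{c^2}\,U_\phi^\flat\otimes U_\phi^\flat,
\end{equation*}
which is positive definite. To show that $h$ is structured Ricci-flat in the sense of Subsection~\ref{subsec.prelim.struc.ricciflat}, I would identify it with the metric induced on a leaf of the parallel orthogonal distribution $\del_t^\perp$. Since $\del_t^\flat$ is $\overline\nabla$-parallel, it is closed, and on the universal cover $\widetilde M\times\RR$ it becomes exact, say $\del_t^\flat=d\widetilde\tau$. The identity $\mathrm{grad}\,\widetilde\tau=\del_t$ together with $\|\del_t\|^2\equiv-c^2$ forces $\widetilde\tau$ to be affine and strictly monotone along every integral curve $\{x\}\times\RR$, so each level set $\widetilde N_c=\widetilde\tau^{-1}(c)$ is a totally geodesic spacelike hypersurface meeting each such curve exactly once and projecting diffeomorphically onto $\widetilde M$. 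Writing the leaf as a graph $x\mapsto (x,\sigma(x))$ with $d\sigma=-U_\phi^\flat/c^2$ and substituting into the metric formula of Theorem~\ref{thm:SolCauchyP}, a short computation shows that the induced Riemannian metric on the leaf, pushed to $\widetilde M$ by this projection, coincides with the lift $\widetilde h$ of $h$.

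Because $\widetilde N_c$ is totally geodesic, the hypersurface spinor formalism of Section~\ref{sec.parallel-spinors.hypersurfaces} identifies the restriction of $\overline\phi$ to $\widetilde N_c$ with a nowhere vanishing spinor that is parallel for the Levi-Civita connection of the induced metric. Transporting via the projection diffeomorphism produces a nowhere vanishing parallel spinor on $(\widetilde M,\widetilde h)$; since any Riemannian manifold admitting such a spinor is Ricci-flat, $(M,h)$ is Ricci-flat and carries a parallel spinor on its universal cover, hence is structured Ricci-flat, as required.

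The main subtlety I expect is organisational rather than conceptual: the orthogonal foliation $\del_t^\perp$ typically does not descend from $\widetilde M\times\RR$ to a foliation by closed leaves on $\overline M$, because $\del_t^\flat$ need not be exact on $M\times\RR$ (its periods along loops in $M$ are the periods of $U_\phi^\flat$, which can be nontrivial). This is precisely why I would not construct $h$ as a quotient of the leaf foliation, but would instead define it intrinsically by the formula above in terms of $g$, $U_\phi$ and the constant $c$; the leaf picture then serves only as geometric justification on the universal cover. Spin-structure bookkeeping is painless, since $\widetilde M$ is simply connected and therefore carries a unique spin structure, so the parallel spinor we obtain is automatically a spinor with respect to the spin structure lifted from $M$.
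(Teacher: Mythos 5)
Your metric $h = g + c^{-2}\,U_\phi^\flat\otimes U_\phi^\flat$ is the right one and agrees with the paper's, which is characterized by requiring that $\pi\colon (M\times\RR,\ol g)\to(M,h)$ be a semi-Riemannian submersion with vertical direction $T=\del_t/c$. The computation on the universal cover, identifying the totally geodesic leaf of $\del_t^\perp$ (where $K=0$, so $\ol\nabla=\nabla$ and the restricted spinor is parallel) with $(\widetilde M,\widetilde h)$, is correct and is a pleasant way to see the geometry. However, your argument ends with a parallel spinor on $(\widetilde M,\widetilde h)$ only, and you conclude that $(M,h)$ is \emph{structured} Ricci-flat. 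That is strictly weaker than the theorem, which asserts that $M$ itself carries a Ricci-flat metric with a parallel spinor. The descent from $\widetilde M$ to $M$ is not automatic the way you have set things up: when $U_\phi^\flat$ has non-trivial periods, a deck transformation $\gamma$ moves the leaf $\widetilde N_c$ to the \emph{different} leaf $\widetilde N_{c-a_\gamma}$, so the projection $\widetilde N_c\to\widetilde M$, $(x,\sigma(x))\mapsto x$, is not $\pi_1(M)$-equivariant, and the spinor you push to $\widetilde M$ has no visible reason to be deck-invariant. Reconciling $\widetilde N_c$ with $\gamma\cdot\widetilde N_c$ would require transporting along $\del_t$, which you never account for.

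The paper closes exactly this gap by never passing to the universal cover. It shows that the $\SO(n)$-reduction of $P_{\SO_0(n,1)}(M\times\RR)$ cut out by the parallel unit field $T$ is canonically $\pi^*P_{\SO(n)}M$, that this identification lifts to spin structures and is connection-preserving, and hence that a spinor $\ol\psi$ on $M\times\RR$ is the pullback of a spinor $\psi$ on $(M,h)$ if and only if $\ol\nabla_T\ol\psi=0$, in which case $\ol\nabla_X\ol\psi=\nabla_{d\pi(X)}\psi$. Applying this to $\overline\phi$ — which is $T$-constant precisely because it is $\ol\nabla$-parallel — produces a parallel spinor on $(M,h)$ directly, on $M$ and not merely on its universal cover. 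That descent mechanism (basic $T$-constant spinors on $M\times\RR$ are exactly spinors on $M$) is the ingredient your proof is missing; with it added, your leaf picture and the paper's submersion picture become two phrasings of the same argument.
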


\begin{proof}
We embed~$M$ into the Lorentzian manifold $(M \times \RR, \ol g)$ from Theorem~\ref{thm:SolCauchyP}; note that \eqref{eq:VectorConstr} is satisfied because of \eqref{gen.imag.killing}, see Lemma~\ref{Lem:VphiParallel}.
Recall, that $(M \times \RR, \ol g)$ carries a parallel spinor $\ol \phi$ extending $\phi$ and that its Dirac flow is given by the (parallel) vector field $\del_t$.
We denote by $T$ the parallel vector field with  $\ol g(T, T) =-1$ obtained by normalizing $\del_t$.
We now define $h$ to be the Riemannian metric on $M$ such that the canonical projection $\pi \colon M \times \RR \to M$ is a semi-Riemannian submersion, \ie $\ol g(X, Y) = -\ol g(T, X) \ol g(T, Y) + h(d_p \pi(X), d_p \pi(Y))$ for all $p \in M \times \RR$ and $X, Y \in T_p(M \times \RR)$.

We seek to define a parallel spinor on $(M, h)$.
First of all, if $(T, e_1, \ldots, e_n)$ is a space- and time-oriented generalized $\ol g$-orthonormal frame, then $(d\pi(e_1), \ldots, d\pi(e_n))$ is an $h$-orthonormal frame.
Phrased differently, the $\SO(n)$-reduction of $P_{\SO_0(n,1)}(M \times \RR) \to M \times \RR$ defined by $T$ is canonically isomorphic to the pullback $\pi^*P_{\SO(n)}M \to M \times \RR$, \ie $(P_{\SO(n)}M) \times \RR \to M \times \RR$.
Passing to double covers, any spin structure $P_{\Spin_0(n,1)}(M \times \RR) \to P_{\SO_0(n,1)}(M \times \RR)$ induces a spin structure on the $\SO(n)$-reduction, and this corresponds uniquely to a spin structure on $(M, h)$.
To summarize, the principal bundles discussed so far fit into the following commutative diagram, where all (proper) squares are pullbacks:
\begin{equation*}
	\begin{tikzcd}
		P_{\Spin_0(n,1)}(M \times \RR) \dar & (P_{\Spin(n)}M) \times \RR \lar \dar \rar & P_{\Spin(n)}M \dar \\
		P_{\SO_0(n,1)}(M \times \RR) \drar & (P_{\SO(n)}M) \times \RR \lar \dar \rar & P_{\SO(n)}M \dar \\
		& M \times \RR \rar{\pi} & M
	\end{tikzcd}
\end{equation*}
In particular, a spinor bundle of $(M \times \RR, \ol g)$ may be obtained by pulling back a corresponding spinor bundle of $(M,h)$, and we shall henceforth identify $\ol \Sigma(M \times \RR) \cong (\ol \Sigma M) \times \RR$.

As $T$ is $\ol \nabla$-parallel, the Levi-Civita connection $\ol \nabla$ of $\ol g$ induces a connection on $(P_{\SO(n)}M) \times \RR \to M \times \RR$ and this connection coincides with the pullback of the Levi-Civita connection $\nabla$ of $h$.
The latter statement follows from the fact that for any $Y \in \Gamma(TM)$ the unique $\pi$-related vector field $\ol Y \in \Gamma(T^\perp) \subset \Gamma(T(M \times \RR))$ satisfies $d\pi(\ol\nabla_X \ol Y) = \nabla_{d\pi(X)} Y$  for all $X \in T(M \times \RR)$, which in the case $\ol g(X,T)=0$ is \cite[Lem.~7.45~(3)]{oneill:83} and holds for $X = T$ as $d\pi(\ol\nabla_T \ol Y)= d\pi([T,\ol Y]) + d\pi(\ol\nabla_{\ol Y}T)= [d\pi(T),d\pi(\ol Y)] + 0 = 0$.
It follows immediately that the connections induced by the Levi-Civita connections also coincide on the spin level.
In particular, after passing to associated bundles, a spinor $\ol \psi \in \Gamma((\ol \Sigma M) \times \RR)$ is a pulled-back spinor $\psi \in \Gamma(\ol \Sigma M)$ if and only if $\ol \nabla_T \ol \psi = 0$ and in this case $\ol \nabla_{X} \ol \psi = \nabla_{d\pi(X)} \psi$.

Applying this to $\ol \phi$ yields the desired parallel spinor on $(M,h)$. 
It should be noted that the spin structure constructed on $(M, h)$ and the spin structure we started with on $(M, g)$ correspond to the same topological spin structure on $M$.
\end{proof}

\begin{corollary}\label{cor:WdecInv-old}
Let $M$ be a closed connected spin manifold, $(g, K)$ an initial data set on $M$ subject to the dominant energy condition and $\phi \in \ker \olDirac \setminus \{0\}$. 
If $V_\phi$ is lightlike, then $b_1(M) \neq 0$. If $V_\phi$ is timelike, then $M$ carries a Ricci-flat metric with a parallel spinor.
\end{corollary}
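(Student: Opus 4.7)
The statement splits into the timelike and lightlike cases according to the causal type of $V_\phi$. The plan for the timelike case is to appeal directly to Theorem~\ref{thm:WdecInv-old}: by Theorem~\ref{theorem.kernel-gives-initial-data} the hypotheses force $\ol\nabla\phi=0$ and thereby turn $(g,K,\phi)$ into a timelike initial data triple in the sense of Definition~\ref{def.initial.data.triples}, and Theorem~\ref{thm:WdecInv-old} then produces the desired Ricci-flat metric with parallel spinor on $M$.

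All the substance lies in the lightlike case. Here my plan is to produce a nowhere-vanishing closed $1$-form on $M$: on a compact connected manifold such a form cannot be exact, since any primitive function would attain a critical point, so it represents a nonzero class in $H^1(M;\RR)$ and forces $b_1(M)\geq 1$. The natural candidate is $U_\phi^\flat$, the metric dual of the Riemannian Dirac current~\eqref{def-Riemann-Dirac-current}.

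Two verifications are needed. First, I would check that $U_\phi$ is nowhere zero: Theorem~\ref{theorem.kernel-gives-initial-data} gives $\ol\nabla\phi=0$, so the parallel, not-identically-vanishing spinor $\phi$ has no zeros on the connected manifold $M$, and hence $u_\phi=\|\phi\|^2>0$ everywhere; Corollary~\ref{cor:contraints.valid} (or a direct computation from~\eqref{UV.equation}) then yields $\|U_\phi\|=u_\phi>0$. Second, I would check that $U_\phi^\flat$ is closed: Lemma~\ref{Lem:VphiParallel} provides $\nabla U_\phi = -u_\phi W$, and using $W=-K^\sharp$ this becomes $g(\nabla_X U_\phi, Y) = u_\phi K(X,Y)$, which is symmetric in $X,Y$ since $K$ is; antisymmetrizing gives $dU_\phi^\flat=0$.

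I do not anticipate any real obstacle here. The only step worth careful attention is the sign bookkeeping (the convention $W=-K^\sharp$ against $\nabla U_\phi=-u_\phi W$); everything else is essentially formal given the machinery already assembled in Sections~\ref{sec.parallel-spinors.hypersurfaces} and~\ref{sec.kernel.DWo}. Conceptually, the argument works because a parallel lightlike Lorentzian Dirac current is rigid enough to project to a ``gradient-like'' direction on $M$, and nowhere-vanishing gradients are forbidden on compact manifolds.
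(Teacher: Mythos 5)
Your proposal is correct and matches the paper's own proof exactly: the timelike case is reduced to Theorems~\ref{theorem.kernel-gives-initial-data} and~\ref{thm:WdecInv-old}, and in the lightlike case $U_\phi^\flat$ is observed to be a nowhere-vanishing closed $1$-form, giving a nontrivial class in $H^1_{dR}(M)$. The paper states the lightlike case without spelling out the two verifications; your derivation of nowhere-vanishing from Corollary~\ref{cor:contraints.valid} and of closedness from $\nabla U_\phi=-u_\phi W$ together with the symmetry of $K$ is exactly the intended argument.
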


\begin{proof}
If $V_\phi$ is lightlike, then $U_\psi^\flat$ is a nowhere vanishing closed $1$-form on $M$, thus $[U_\psi^\flat]$ is a non-trivial element in $H^1_{dR}(M)$.

If  $V_\phi$ is timelike, then the statement follows immediately from Theorems~\ref{theorem.kernel-gives-initial-data} and~\ref{thm:WdecInv-old}.
\end{proof}

Thus we have reduced the question for topological obstructions to timelike initial data triples to the well-studied problem  of determining obstructions to have a metric whose restricted holonomy is a product of group $\SU(k_1)$, $\Sp(k_2)$, $G_2$ and $\Spin(7)$, see Subsection~\ref{subsec.prelim.struc.ricciflat} for details.
In the lightlike case, we will derive further conclusions in the following subsections.

\begin{examples}\label{example.obstr.structured}
Let us give some examples of closed manifolds with obstructions to Ricci-flat metrics with nowhere vanishing parallel spinors. Due to Theorem~\ref{thm:WdecInv-old} this also provides obstructions to timelike initial data triples.
Some of these examples also feature a zero first Betti number so that there is no non-trivial initial data triple at all according to Corollary~\ref{cor:WdecInv-old}.
  \begin{enumerate}[{\rm(1)}]
 \item \label{example.obstruc.para.spin.i}
 If $M$ is a closed $n$-dimensional Ricci-flat manifold, then it follows from the Cheeger-Gromoll splitting theorem that its fundamental group 
contains a free abelian subgroup of finite index and rank at most $n$, see \eg  \cite[Chap.~V, Theorem~3.11]{sakai:96}.
 In particular, $\pi_1(M)$ is virtually abelian and has polynomial growth of degree at most $n$.
This yields an obstruction to a timelike initial data triple. E.g.\ manifolds that admit a metric of negative sectional curvature have an exponentially growing fundamental group and are thus ruled out.
\item\label{example.obstruc.para.spin.ii}
  Let $k\in \NN$. For  $x,y\in \RR^k$, $z\in \RR$ consider
   the matrix $H_{x,y,z}$, the $2k+1$-dimensional Heisenberg group $\maH_{2k+1}$ and the lattice $L_{2k+1}$ as follows:
   $$H_{x,y,z}:=\begin{pmatrix}1 & x_1& x_2& \cdots & x_k & z\\ 0 & 1 & 0 &\cdots&0 & y_k\\
   0 & 0 & 1 & 0 & & y_{k-1}\\
   \vdots & \vdots & 0 & 1 & & \vdots\\
   \vdots & \vdots & \vdots &  &\ddots & y_1\\
   0& 0 & 0 & \cdots &0 & 1
   \end{pmatrix},$$
   $$\maH_{2k+1}:=\left\{H_{x,y,z}\mid x,y\in \RR^k,z\in \RR\right\},\quad L_{2k+1}:=\left\{H_{x,y,z}\mid x,y\in \ZZ^k,z\in \ZZ\right\}.$$
   The $2k+1$-dimensional Heisenberg manifold $\mathbb{H}_{2k+1}\definedas L_{2k+1}\backslash\maH_{2k+1}$ has fundamental group $L_{2k+1}$ which grows as a polynomial of degree $2k+2>2k+1=\dim \mathbb{H}_{2k+1}$. Thus there are no Ricci-flat metrics on $\mathbb{H}_{2k+1}$. This manifold has first Betti number $b_1(\mathbb{H}_{2k+1})=2k$. 
 \item\label{example.obstruc.para.spin.iii}
   The map $H_{x,y,z}\mapsto H_{-x,-y,z}$ defines a group automorphism of order $2$ and we can consider the associated semi-direct product
   $$\Gamma:=L_{2k+1}\rtimes \ZZ/2\ZZ.$$
Consider a manifold $M$ of dimension in $\{ 4,5\ldots,2k+1\}$  with $\pi_1(M)=\Gamma$. Again the fundamental group of $M$ grows as a polynomial of order $2k+2$, thus no Ricci-flat metric exists. 
Because of $\Gamma/[\Gamma,\Gamma]=  (\ZZ/2\ZZ)^{2k+1}$, we also have $b_1(M)=0$.
\item\label{example.obstruc.para.spin.iv}
  Further obstructions come from special holonomy. If a closed spin manifold $M$ has a timelike initial data triple, and hence, also a metric with parallel spinor, then a finite cover of $M$ is isometric to a Riemannian product of manifolds with trivial holonomy, with holonomy $\SU(m)$ (Calabi-Yau manifold), with holonomy $\Sp(m)$ (hyper-Kähler manifold), $G_2$ and $\Spin(7)$. Thus, this covering has a nontrivial parallel $1$-forms (in the case of a trivial factor), a parallel $2$-form (in the case of an $\SU(m)$-factor or an $\Sp(m)$-factor, a parallel $3$-form (in the case of a $7$-dimensional $G_2$-factor, or a parallel $4$-form (in the case of an $8$-dimensional $\Spin(7)$-factor).
  In particular, all spheres $S^n$, $n\geq 2$ do not carry a timelike initial data triple.
  \item \label{example.obstruc.para.spin.v}
If $M$ is a closed connected spin manifold of dimension $n = 4k$ with $|\hat{A}(M)| > 2^{2k-1}$, then $M$ does not carry a Ricci-flat metric with parallel spinor.
Arguing by contradiction, we assume that there exists a metric $g$ with $\scal^g \geq 0$.
Then partially integrating the Schrödinger-Lichnerowicz formula yields that any harmonic spinor $\phi$ on $(M,g)$ is parallel: $0=\|\Dirac\phi\|_{L^2}^2 \geq \|\nabla \phi\|_{L^2}^2$.
This is true for any spinor bundle, but we restrict our attention to the complex irreducible one.
In this case, the Atiyah-Singer index theorem states that $\hat{A}(M) = \ind{\Dirac} = \dim \ker(\Dirac_{|\Sigma^+ M}) - \dim \ker(\Dirac_{|\Sigma^-M})$ and the ineqality above yields that either $\Gamma(\Sigma^+ M)$ or $\Gamma(\Sigma^- M)$ must contain more than $2^{2k-1}$ parallel spinors.
This is a contradiction, as the rank of the half spinor bundles is $2^{2k-1}$.
This yields a bunch of examples, as the $\hat{A}$-genus is additive under the direct sum operation.
For instance, if $M$ is the connected sum of two or more copies of a $K3$-surface with the same orientation, then we have $|\hat{A}(M)|\geq 4$, and thus the assumption is satisfied.
Note that as the $K3$-surface is simply connected, we again have $b_1(M) = 0$.
  \end{enumerate}
\end{examples}

\subsection{Examples of lightlike (generalized) initial data triples}\label{subsec.example-ll-gen-ini-d-t-gen}

In this and the following subsections we want to analyze lightlike initial data triples, and we will try to derive necessary conditions for their existence. For this the following modification of Definition~\ref{def.initial.data.triples} will be helpful.

\begin{definition}\label{def.gen.initial.data.triples}
  Let $M$ be a connected manifold whose universal covering $\witi M$ is spin.
  A \emph{generalized initial data triple} $(g,K,\phi)$ on $M$ consists of a Riemannian metric $g$, a symmetric $2$-tensor $K$ and a nowhere vanishing spinor $\phi\in \Gamma(\ol\Sigma \witi M)$ on the universal covering solving \eqref{gen.imag.killing} for the pullbacks of $g$ and $K$ to $\witi M$, with the condition that the Riemannian Dirac current $U_\phi$ is the pullback of a vector field on $M$.
 Again, we say that the triple is \emph{lightlike} (or \emph{timelike}), if the Lorentzian Dirac current is lightlike (or timelike) everywhere.
\end{definition}

Our goal is to obtain necessary conditions for a closed manifold $M$ to admit a lightlike generalized initial data triple. For a better understanding of our proof, it is helpful to have some examples of lightlike generalized initial data triples in mind.

\begin{example}\label{example.lightlike.idt.eins}
  Let $(W,g_0)$ be an $n$-dimensional euclidean vector space, and let $V$ be a hyperplane with unit normal vector $N$, viewed as a parallel vector field on~$W$.  Let~$\bigcdot$ be the Clifford multiplication of a spinor bundle of $(W, g_0)$, given by a representation that admits a symmetric involution $e_0$, anti-commuting with Clifford multiplication by vectors in $W$. Thus, a further Clifford multiplication is given by $X\cdot\psi:= -ie_0 \cdot X\bigcdot \psi$.
  
There is a non-zero parallel spinor field $\phi_0$ with $N\bigcdot \phi_0=i\phi_0$. With the calculation $g(U_{\phi_0},X)=\<X\bigcdot \phi_0,N\bigcdot \phi_0\>=g(X,N)\,\|\phi_0\|^2$ we see $U_{\phi_0}=\|\phi_0\|^2 N$.
As a result $\phi_0$ solves \eqref{gen.imag.killing} with $K=0$ and thus $(g_0,0,\phi_0)$ is a lightlike initial data triple on $W$. Let $\Gamma$ be a lattice in $W$. Then the intial data triple $(g_0,0,\phi_0)$ factors to an initial data triple $(g,0,\phi)$ on $M\definedas W/\Gamma$.
Assume that $w_1,\ldots, w_n$ is a $\ZZ$-basis of $\Gamma$, $w_n\notin V$, and write
$w_i=v_i+s_iw_n$ with $v_i\in V$ and $s_i\in \RR$. If $s_1,\ldots,s_{n-1}$ are all rational,
then the image of $V$ in $M$ is a torus, otherwise it is a dense leaf of some (flat) foliation of codimension $1$. If $s_1,\ldots,s_{n-1},s_n=1$ are rationally linearly independent, then the leaves are simply connected and thus isometrically immersed euclidean spaces of dimension $n-1$.

In the rational case we may choose positive, smooth, $\Gamma$-invariant and $V$-invariant
functions $f_1,f_2:W\to \RR$, then $(f_1g,K,f_2\phi)$ is again a lightlike initial data triple of~$M$ for some suitable $K_0$, see  \cite{ammann.kroencke.mueller:p19}.  
\end{example}

\begin{example}\label{example.heisenberg.comb}
  Recall that for $x,y,z\in \RR$ the matrix $H_{x,y,z}$,  the Heisenberg group $\maH_3$  and the lattice $L_3$ were defined in Example~\ref{example.obstr.structured}~\eqref{example.obstruc.para.spin.ii}.
  For a real number $\alpha$ we define $s\colon\maH_3\to \RR$, $s(H_{x,y,z})= y+\alpha x$,  which is a Lie group homomorphism and whose level sets define a left-invariant codimension $1$ foliation of $\maH_3$, and this foliation descends to a foliation of $\Gamma\backslash \maH_3$ for any lattice $\Gamma$ in $\maH_3$, \eg $\Gamma=L_3$.
We want to construct lightlike generalized initial data triples on
$\HH_3=L_3\backslash \maH_3$.

At first we equip $\maH_3$ with a left-invariant metric $g$. The leaves of the foliation carry a flat metric, and their second fundamental form is left-invariant as  well, in particular it is parallel along the leaves. 
We will now show that there exist a symmetric 2-tensor $K$ and a spinor $\phi$ such that $(g,K,\phi)$ defines a lightlike initial data triple on $\maH_3$, the Riemannian Dirac current $U_\phi$ is proportional to $\grad(s)$ and $(g,K,\phi)$ descends to a generalized initial data triple on $\HH_3$.

As a technical interplay we also consider the lattice $\Gamma_\alpha\definedas\lgen H_{1,-\alpha,0},H_{0,1,0} \rgen$, \ie the group generated by  $H_{1,-\alpha,0}$ and $H_{0,1,0}$, which also includes
their commutator $H_{0,0,1}$; we have $\Gamma_0=L_3$.
As we have $s(H_{1,-\alpha,0})=0$ and $s(H_{0,1,0})=1$, the function $s$ defines a well-defined torus bundle $\bar s \colon \Gamma_\alpha\backslash \maH_3\to \RR/\ZZ$.
Let $Q^\alpha=\bar s^{-1}([0])=\lgen H_{1,-\alpha,0},H_{0,0,1} \rgen\backslash s^{-1}(0)$, which is a flat torus and a Lie group, and let $\iota: Q^\alpha\to \Gamma_\alpha\backslash \maH_3$ be the inclusion.\footnote{Note that $\bar s \colon \Gamma_\alpha\backslash \maH_3\to \RR/\ZZ$ is \textbf{not} principal bundle for the group $Q^\alpha$: it has a section and principal bundles with a section are trivial.} 
Let $N$ be one of the two unit normal fields of the foliation, and let $\Psi_t$ denote the flow along $N$. Then $s(\Psi_t(p))=s(p)+\beta t$ for some $\beta\neq 0$, and the action of the abelian Lie group $s^{-1}(0)$ preserves $s^{-1}(t)$.
Define $g_t:=\iota^*\Psi_t^*(g)$, where~$g$ also denotes the associated metric on  $\Gamma_\alpha\backslash \maH_3$.
Then $(\lgen H_{1,-\alpha,0},H_{0,0,1} \rgen\backslash\maH_3,g)$ is isometric to $(Q^\alpha\times \RR,g_t+dt^2)$, and we will identify them. The manifolds and the isometry are invariant under left multiplication by $s^{-1}(0)$. The derivative $\frac{d}{dt}|_{t=t_0}g_t$ is non-zero, invariant by the  $s^{-1}(0)$-action and thus parallel along $Q^\alpha\times\{t_0\}$ for the intrisic connection, in particular it is $g_{t_0}$-divergence free.

We choose spin structures, such that non-trivial parallel spinors exist on all tori. There is a parallel spinor $\phi_0$ on $Q^\alpha$ with $N\bigcdot\phi_0=i\phi_0$.
Then
\cite[Main Constructions 15 to 17]{ammann.kroencke.mueller:p19}  provides a unique extension of $\phi_0$ to a spinor $\phi$ on $Q^\alpha\times \RR$ with $\nabla_N\phi=0$. One checks $N\bigcdot \phi=i\phi$.
It was proven in \cite{ammann.kroencke.mueller:p19} that the spinor $\phi$ is  $Q^\alpha$-invariant, thus parallel along each leaf $Q^\alpha\times \{t_0\}$ with respect to the intrinsic connection. We obtain a lightlike initial data triple $(g,K,\phi)$ on $Q^\alpha\times\RR$ for some suitable $K$ with $U_\phi=\|\phi_0\|^2N$ and $u_\phi=\|\phi_0\|^2$.
By pullback we obtain a lightlike initial data triple on $\maH_3$, invariant under the action of $H_{x,-\alpha x,z}$
but we do not expect it to be invariant under $H_{0,y,0}$. It yields a lightlike \emph{generalized} initial data triple on $\HH_3$. The construction in \cite{ammann.kroencke.mueller:p19} is such that the left invariance of $g$ and the left invariance of the foliation imply the left invariance of $K$ -- with proper identifications $K$ coincides with the second fundamental form of the leaves. Thus the tensor $K$ descends to~$\HH_3$.
We expect -- although we have no proof -- that a suitable modification of the metrics $g_t$ in the class of flat metrics on tori yields non-generalized triples on $\HH_3$, \ie triples where the spinor is defined on $\HH_3$.

The level sets of $s$ give a codimension-$1$ foliation of $\HH_3$. The leaves are $2$-tori if $\alpha$ is rational. For irrational $\alpha$ all leaves are dense and diffeomorphic to $S^1\times\RR$.

In this example $\pi_1(\HH_3)$ is isomorphic to $\ZZ^2\rtimes \ZZ$, where
the action of $\ZZ$ on $\ZZ^2$ is given by the matrix
  $$A_0:=\begin{pmatrix}1 & 1 \\ 0&  1 \end{pmatrix}.$$

Let us note without proof that one may perturb these examples to lightlike generalized initial data triples, without the symmetries above, and such that the second fundamental form of the leaves is no longer parallel, both in the case of rational and irrational $\alpha$. However, the induced metric on the leaves remains flat. Modifications by functions $f_1$ and $f_2$ similar to the modifications in Example~\ref{example.lightlike.idt.eins} are also possible, provided $\alpha$ is rational; in this modifications $f_1$ and $f_2$ are defined on the space of leaves, which is diffeomorphic to $S^1$.
\end{example}

\begin{example}

  Consider $S_{k,A}:=(\ZZ^k\rtimes \ZZ)\backslash (\RR^k\times \RR)$, where we assume that the action of $1\in \ZZ$ in the semidirect product is given by a matrix $A\in \SL(k,\ZZ)$. Any curve $\RR\to \Mod_0(\ZZ^k\backslash \RR^k)$ $t\mapsto [g_t]$ in the moduli space of flat tori, with $A^*[g_{t+1}]=[g_t]$ defines a lightlike generalized intial data triple $(g,K,\phi)$ on $S_{k,A}$, where see \cite[Main Constructions 15--17]{ammann.kroencke.mueller:p19}. We choose the representatives $g_t$ for $[g_t]$ such that $\div^{g_t}\frac{d}{dt} g_t=0$. On  $((\ZZ^k\backslash \RR^k)\times \RR,g)$ we define the metric $g_t+dt^2$. A family of isometries $(\ZZ^k\backslash \RR^k,g_t)\to(\ZZ^k\backslash \RR^k,g_{t+1})$, homotopic to $A$ define a $\ZZ$-action on $(\ZZ^k\backslash \RR^k)\times\RR$ which may be turned into the deck transformation group for  $(\ZZ^k\backslash \RR^k)\times\RR\to S_{k,A}$. And again, the Dirac current $U_\phi$ of $\phi$ satisfies $U_\phi^\flat=c\, dt$ for some constant $c$. In the case $k=2$, $A=A_0$, and $g_t$ as in Example~\ref{example.heisenberg.comb}, these examples coincide with the case $\alpha=0$ constructed in Example~\ref{example.heisenberg.comb}. The modifications by periodic functions $f_1(t)$ and $f_2(t)$
  are also possible in this example, though already covered by  \cite{ammann.kroencke.mueller:p19}. The leaves, given by integration of $\ker U_\phi^\flat$ are the tori with constant $t$, in particular they are compact.

  Under some special conditions one may ``tilt'' these examples and deform them to  lightlike generalized initial data triples, such that the leaves are non-compact. Obviously $\pi_1(S_{k,A})=\ZZ^k\rtimes \ZZ$.

  Note that $S_{k,A}$ is the mapping torus of the diffeomorphism $\ZZ^k\backslash \RR^k\to\ZZ^k\backslash \RR^k$ given by $A$.
  One also may replace $\ZZ^k$ by a Bieberbach group $\Gamma$ acting $\RR^k$ such that  $\Gamma\backslash\RR^k$ is spin with parallel spinor, see \cite{pfaeffle:00} for examples, and for suitable spin diffeomorphisms. This version of the construction provides examples that contain $\ZZ^k\rtimes \ZZ$ as a finite index subgroup.
  \end{example} 

\begin{remark}\label{rem:further.methods}
  A further method to obtain new examples is as follows. We start with a lightlike initial data triple $(g,K,\phi)$ on some manifold $M$. We solve the associated Cauchy problem, see Subsection~\ref{subsec.Cauchy.par.spin}, and we obtain a globally hyperbolic Lorentzian spin manifold $(\ol M,\bar g)$, with a lightlike parallel spinor such that $M$ is a Cauchy surface for~$\ol M$ with the given~$(g,K,\phi)$ induced. Now, if~$M_0$ is another Cauchy surface for~$\ol M$, then the geometry and the parallel spinor on $(\ol M,\bar g)$ induce a lightlike  initial data triple on $M_0$ as well --  and by pulling back with a diffeomorphism -- a lightlike  initial data triple $(g_0,K_0,\phi_0)$ on $M$. In general, the diffeomorphism $M\to M_0$ cannot be chosen such that $(g_0,K_0,\phi_0)$ equals to $(g,K,\phi)$. One may break symmetries acting on $M$ by this construction.
  
The same construction works for lightlike generalized initial data triples with the obvious modifications.
\end{remark}  

\begin{summary*}
  We have seen that there are many examples of generalized initial data triples on closed manifolds. The fundamental group of all of them is virtually solvable of derived length at most $2$. We will show in the following subsections that this is indeed a necessary condition.  
\end{summary*}

\subsection{Lightlike initial data manifolds and other notation}\label{subsec.ll-gen-ini-d-t-gen}

\begin{definition}\label{def.initial.data.manifold}
  We say that an $n$-dimensional closed connected spin manifold $M$ is a \emph{lightlike initial data manifold of rank $\ell\in\{0,1,\ldots, n-1\}$} if there is a simply-connected closed $(n-\ell-1)$-dimensional manifold $P$, an oriented spin diffeomorphism $f\colon P\to P$, a path $[0,1]\ni t\mapsto h_t$ of structured Ricci-flat metrics on $P$ with $h_0=f^*h_1$, and  a matrix $A\in \SL(\ell,\ZZ)$ such that $M$ is spin diffeomorphic to  
  $$\quotientspace{P\times \bigl(\Torus[\ell]\bigr)\times [0,1]}{(p,x,0)\sim \bigl(f(p),Ax,1\bigr).}$$
\end{definition}
In the definition we assume that the torus $\Torus[\ell]$ carries the spin structure, such that the spin structure has a trivialization that is parallel for the euclidean metric. Note that every $\ell$-dimensional torus with a spin structure has a double cover that is spin-diffeomorphic to  $\Torus[\ell]$ with this spin structure. 
Recall that a Ricci-flat metric on $P$ is called structured if the universal cover of $P$ is spin and carries a non-trivial parallel spinor.
In the definition above, simply-connectedness of $P$ implies that a metric is structured Ricci-flat on $P$ if and only if it carries a non-trivial parallel spinor.

In particular, the fundamental group of any  lightlike initial data manifold $M$ of rank $r$ is of the form $\pi_1(M)=\ZZ^\ell\rtimes \ZZ$ and its first
Betti number satisfies $b_1(M)= 1+\dim \ker (A-1)$.

The following proposition is an immediate consequence of \cite[Main Constructions~15--17]{ammann.kroencke.mueller:p19}.

\begin{proposition}
Any lightlike initial data manifold carries a lightlike generalized initial data triple  $(g,K,\phi)$.
\end{proposition}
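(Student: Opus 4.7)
The plan is to reduce to \cite[Main Constructions 15--17]{ammann.kroencke.mueller:p19} by first producing all data on the universal cover and then verifying descent. Writing $\widetilde{M} \cong P \times \RR^\ell \times \RR$, the deck group $\pi_1(M) \cong \ZZ^\ell \rtimes_A \ZZ$ acts by lattice translations on the middle factor and has the generator of the $\ZZ$-factor send $(p, x, t)$ to $(f(p), A x, t+1)$. I would first extend the given path $t \mapsto h_t$ from $[0,1]$ to $\RR$ by declaring $h_{t+1} = (f^{-1})^* h_t$; the hypothesis $h_0 = f^* h_1$ makes this extension smooth. Because $A \in \SL(\ell, \ZZ)$ acts linearly on the contractible cone of positive definite symmetric $\ell \times \ell$ matrices, I would next choose a smooth family $t \mapsto g_t^{\mathrm{flat}}$ of $\ZZ^\ell$-invariant flat metrics on $\RR^\ell$ satisfying $g_{t+1}^{\mathrm{flat}} = A^* g_t^{\mathrm{flat}}$; concretely, one takes any smooth path in the cone from $g_0^{\mathrm{flat}}$ to $A^* g_0^{\mathrm{flat}}$ and propagates it by the $A$-action.

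With these preparations in place, each leaf $P \times \RR^\ell \times \{t\}$ carries the product Ricci-flat metric $h_t + g_t^{\mathrm{flat}}$. Since $P$ is simply connected and $h_t$ is structured Ricci-flat, it admits a nowhere vanishing parallel spinor $\psi_t^P$, depending smoothly on $t$. Tensoring with a covariantly constant spinor on $(\RR^\ell, g_t^{\mathrm{flat}})$ satisfying the eigenvalue condition $N \bigcdot \psi_0 = i \psi_0$ with respect to the forthcoming unit normal $N = \partial_t$, as in Example~\ref{example.lightlike.idt.eins}, one obtains a parallel spinor on each leaf. This places us exactly in the framework of \cite[Main Constructions 15--17]{ammann.kroencke.mueller:p19}, which takes a one-parameter family of Ricci-flat manifolds with parallel spinors and produces on the cylinder a metric $\tilde g = h_t + g_t^{\mathrm{flat}} + dt^2$, a symmetric $2$-tensor $\tilde K$ arising as the second fundamental form of the leaves, and a spinor $\tilde\phi$ solving the imaginary $W$-Killing equation \eqref{gen.imag.killing}. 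By construction the triple is lightlike with $U_{\tilde\phi}$ proportional to $\partial_t$ and $u_{\tilde\phi} = \|\tilde\phi\|^2$ periodic in $t$.

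Finally I would verify descent. The tensors $\tilde g$ and $\tilde K$ are manifestly $\ZZ^\ell$-invariant, while the relations $f^* h_{t+1} = h_t$ and $A^* g_{t+1}^{\mathrm{flat}} = g_t^{\mathrm{flat}}$ guarantee invariance under the generator of the $\ZZ$-factor; hence $\tilde g$ and $\tilde K$ descend to a metric $g$ and a symmetric $2$-tensor $K$ on $M$. The Dirac current $U_{\tilde\phi}$, being $u_{\tilde\phi} \cdot \partial_t$ with $u_{\tilde\phi}$ periodic and invariant under deck transformations, likewise descends to a vector field on $M$. The main obstacle — and the reason we only obtain a \emph{generalized} initial data triple in the sense of Definition~\ref{def.gen.initial.data.triples}, rather than one defined on $M$ itself — is that the spin lift of the deck map $(p, x, t) \mapsto (f(p), A x, t+1)$ may act on $\tilde\phi$ by a nontrivial sign or phase, so $\tilde\phi$ need not descend to $M$. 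This is consistent with, and indeed parallels, the behaviour already observed in Example~\ref{example.heisenberg.comb} and the subsequent examples of Subsection~\ref{subsec.example-ll-gen-ini-d-t-gen}.
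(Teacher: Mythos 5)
Your argument is correct and takes essentially the same route the paper does: the paper derives this proposition as an immediate consequence of \cite[Main Constructions~15--17]{ammann.kroencke.mueller:p19}, and your write-up simply makes explicit the periodic extension of the data to the universal cover $P\times\RR^\ell\times\RR$, the application of those constructions leafwise, and the descent of $g$, $K$ and $U_{\tilde\phi}$ (but not necessarily of $\tilde\phi$ itself, whence only a \emph{generalized} triple) to the mapping torus. The one point worth a word is that the families $h_t$ and $g_t^{\mathrm{flat}}$ should be fed into the Main Constructions in the gauge $\div^{g_t}\dot g_t=0$, as the paper arranges in its analogous examples; for your constant-coefficient flat metrics this is automatic, and for $h_t$ it can be achieved by pulling back with an isotopy, so the conclusion is unaffected.
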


We conjecture that the converse of this statement is true up to a finite covering.

\begin{conjecture}\label{conj:WdecInv-new}
Let $M$ be an $n$-dimensional closed connected spin manifold, $(g, K,\phi)$ a  lightlike generalized initial data triple.
Then $M$ is finitely covered by a lightlike initial data manifold.
\end{conjecture}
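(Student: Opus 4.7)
The plan is to construct, on a suitable finite cover $\widehat M \to M$, an explicit mapping torus structure matching Definition~\ref{def.initial.data.manifold}. The argument naturally splits into three stages: analyze the codimension-one foliation cut out by the Dirac current, identify the leaf geometry via structured Ricci-flat theory, and globalize by topological rigidity.

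First I would study the closed, nowhere-vanishing $1$-form $\omega\definedas U_\phi^\flat$ on~$M$, which descends from $\witi M$ by hypothesis on $U_\phi$ and is closed because the Lorentzian Dirac current $V_\phi = u_\phi e_0 - U_\phi$ is $\ol\nabla$-parallel along~$M$ (Lemma~\ref{Lem:VphiParallel}). Setting $\maF\definedas\ker\omega$, the constraint~\eqref{gen.imag.killing} combined with the annihilation $V_\phi\cdot\phi=0$ (Lemma~\ref{lem.lorentz-dirac.current.annih}) along a leaf~$L$ should force the Weingarten contribution $e_0\cdot W(X)\cdot\phi$ to effectively vanish on directions $X\in TL$, so that the pullback of~$\phi$ to the universal cover $\widetilde L$ is parallel for the intrinsic Levi-Civita connection of $(\widetilde L, g|_{\widetilde L})$. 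Hence every leaf is structured Ricci-flat, and by the parallel-spinor refinement of the Cheeger--Gromoll splitting \cite{wang_m:91}, its universal cover splits isometrically as $\RR^\ell\times P$ with $P$ simply connected, closed and carrying a parallel spinor.

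Second, I would globalize the leafwise splitting. Stability and smoothness of the moduli of structured Ricci-flat metrics (Subsection~\ref{subsec.prelim.struc.ricciflat}; \cite{dai.wang.wei:05,kroencke:15,ammann.kroencke.weiss.witt:19}) should force the simply connected factor~$P$ to be rigid as the leaf varies, so that holonomy around loops in~$M$ acts diagonally on $\RR^\ell\times P$ and produces a finite-index subgroup of $\pi_1(M)$ of the form $\ZZ^\ell\rtimes\ZZ$, refining Corollary~\ref{cor:obstr.lightlike.id-triple} in the lightlike case. Passing to the corresponding finite cover $\widehat M$, the generator of the $\ZZ$ factor yields a spin diffeomorphism $f\colon P\to P$ together with a matrix $A\in \SL(\ell,\ZZ)$, while the family of pulled-back metrics along the $\ZZ$-direction provides the path $h_t$ of structured Ricci-flat metrics on $P$ with $h_0=f^*h_1$. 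Since the model manifold of Definition~\ref{def.initial.data.manifold} is aspherical with polycyclic fundamental group, Farrell--Jones rigidity promotes the resulting homotopy equivalence to a spin diffeomorphism of $\widehat M$ with the model.

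The main obstacle is the second step when $[\omega]\in H^1(M;\RR)$ is irrational, as in Example~\ref{example.heisenberg.comb} with irrational~$\alpha$: every leaf of $\maF$ is then dense in~$M$, so there is no ambient fibration $M\to S^1$ and the product splitting $\RR^\ell\times P$ of the leaves cannot be read off from a transverse coordinate. Transporting the splitting across the non-Hausdorff leaf space will likely require one of two refinements: either using the $\ol\nabla$-parallel Lorentzian extension from Theorem~\ref{thm:SolCauchyP} to propagate the parallel spinor globally and exploiting Leistner-type holonomy classification for lightlike parallel spinors on $(\ol M,\ol g)$, or establishing a direct rigidity theorem for the holonomy representation of $\pi_1(M)$ on a leaf's universal cover that is independent of the leaf-space topology. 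It is this analytic-to-topological passage, rather than the leafwise geometry, that I expect to carry the genuine difficulty of the conjecture.
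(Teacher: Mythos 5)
The statement you are attempting to prove is explicitly labeled a \emph{conjecture} in the paper; the authors prove it only under the additional hypothesis that the leaves of the foliation $\maF = \ker U_\phi^\flat$ are compact (Theorem~\ref{thm:WdecInv-new-cpct}), and state that "we expect that triples with non-compact leaves can be deformed to triples with compact leaves, but a full proof is still missing." Your first stage agrees with the paper's mechanism: the leaves carry structured Ricci-flat metrics (this is Leistner--Lischewski's \cite[Theorem~4]{leistner.lischewski:19}, invoked via Corollary~\ref{cor:contraints.valid}, rather than the informal Weingarten-vanishing argument you sketch), and their universal covers split as $P\times \RR^\ell$ by Cheeger--Gromoll together with finiteness of $\Isom(P,h)$ (Proposition~\ref{prop:Qtilde.struc}). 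You also correctly identify the dense-leaf (irrational) case as the genuine difficulty. The gap is that your proposed remedies do not close it.

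Concretely, in the dense-leaf case the paper's own Theorem~\ref{thm:WdecInv-light-nc.one} shows that a finite cover $M_1$ decomposes as $\maT_1 \times P$, where $\maT_1$ is a torus bundle over a base $B_1$ homotopy-equivalent to $T^{k+1}$, and the resulting extension is $0\to \ZZ^{\ell-k}\to\pi_1(M_1)\to \ZZ^{k+1}\to 0$ — not the semidirect product $\ZZ^{\ell}\rtimes\ZZ$ you assert. The Heisenberg example~\ref{example.heisenberg.irrat} with irrational $\alpha$ is exactly of this form: there the natural sequence extracted from the foliation is the central extension $0\to\ZZ\to L_3\to\ZZ^2\to 0$, not the mapping-torus sequence $0\to\ZZ^2\to L_3\to\ZZ\to 0$ that Definition~\ref{def.initial.data.manifold} would produce. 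In the dense case there is no distinguished circle direction, hence no "generator of the $\ZZ$ factor" acting on a single leaf; the mapping-torus presentation has to be \emph{created}, by perturbing the irrational slope of $U_\phi^\flat$ to a rational one, and this deformation within $\Dec(M)$ (or even just within initial data triples) is precisely the missing step. Invoking Farrell--Jones / Borel rigidity on the base $B_1$ only upgrades the homotopy equivalence $B_1\simeq T^{k+1}$ to a homeomorphism (for $k\geq 4$ or $k=2$); it says nothing about realizing $M_1$ as a mapping torus of $P\times\Torus[\ell]$, nor does it produce the required path of structured Ricci-flat metrics $h_t$, nor the spin diffeomorphism $f$. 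So the argument, as written, proves something strictly weaker than the conjecture and omits the one idea (rationalization of the foliation) the paper flags as open.
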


We will explain how to prove that the conjecture holds in a special case.

In fact, suppose that $(g, K,\phi)$ is a  lightlike generalized initial data triple. By assumption the Riemannian Dirac current of $\phi$ is the pullback of some $U\in \Gamma(TM)$. Then $U^\flat$ is a closed $1$-form, and the kernel of $U^\flat$ integrates to a foliation $\maF$ of codimension~$1$. Let $\Phi_t: M\to  M$ be the flow of the  the vector field $U/\|U\|^2$. This flow maps leaves of $\maF$ diffeomorphically to other leaves, and it acts transitively on the space of leaves. In particular, either all leaves are  compact or all leaves are non-compact.
We will confirm the conjecture in the case of compact leaves, see Theorem~\ref{thm:WdecInv-new-cpct}. We expect that triples with  non-compact leaves can be deformed to triples with compact leaves, but a full proof is still missing.

Before,  we will discuss the case of compact and non-compact leaves separately,
we will introduce some joint notation. Let $\pi:\witi M\to M$ be the universal covering. Let $s:\witi M\to \RR$ be a smooth function  with $ds=\pi^*(U^\flat)$, and set $\witi Q_\sigma:=s^{-1}(\sigma)$.
Let $\witi\Phi_t:\witi M\to \witi M$ be the flow of the pullback of the vector field $U/\|U\|^2$. Obviously, it is a lift of $\Phi_t$ and satifies
$s(\witi\Phi_t(x))=s(x)+t$. We obtain a diffeomorphism $\witi Q_0\times\RR\to\witi  M$, $(x,t)\mapsto \witi\Phi_t(x)$. Thus $\witi M$ and $\witi Q_0$ are homotopy equivalent, in particular $\witi Q_0$ is simply-connected.

The leaves of $\maF$ are thus given by $Q_\sigma:=\pi(\witi Q_\sigma)$, $\sigma\in \RR$.
Let $\tilde g_\sigma$ and $g_\sigma$ be the induced metrics on $\witi Q_\sigma$ and $Q_\sigma$, respectively.
According to Leistner and Lischewski \cite[Theorem 4]{leistner.lischewski:19}
$g_\sigma$ is a structured Ricci-flat metric on~$Q_\sigma$. As the $1$-form $\pi^*(U^\flat)$ is invariant under the deck transformation group of $\witi M\to M$, there is a group homomorphism $S\colon \pi_1(M)\to \RR$, satisfying
$s(\gamma\cdot x)=S(\gamma)+s(x)$ for any $\gamma\in \pi_1(M)$ and $x\in \witi M$, where $\pi_1(M)$ is identified with the deck transformation group by fixing a base point in $\witi M$.
The image of $S$ is never $\{0\}$, as this would imply that $s$ is a pullback from $M$ and thus has a maximum.
If the image of $S$ is a discrete set, say $\sigma_0\ZZ$, then $s$ factors to a well-defined map $M\to \RR/\sigma_0\ZZ$, and thus the leaves are compact.
Conversely, if a leaf $Q_\sigma$ is compact, then for small $\ep>0$ the map $Q_\sigma\times (-\ep,\ep)\to M$, $(x,t)\mapsto \Phi_t(x)$ is injective, and thus its lift  $\witi Q_\sigma\times (-\ep,\ep)\to \witi M$, $(x,t)\mapsto \witi \Phi_t(x)$ will not hit the preimage of $Q_\sigma$ other than in $\witi Q_\sigma$. Hence the image of $S$ is disjoint from $(-\ep,\ep)\setminus \{0\}$. We have thus seen that the image of $S$ is discrete if and only if the leaves are compact.

\subsection{Lightlike generalized initial data triples with compact leaves}\label{subsec.ll-gen-ini-d-t-cpct}

We now consider the case of a compact leaf.

\begin{theorem}\label{thm:WdecInv-new-cpct}
  Let $M$ be an $n$-dimensional closed connected spin manifold with a lightlike generalized initial dats triple $(g, K,\phi)$.
 Assume that the leaves of the associated foliation $\maF$ -- defined at the end of Subsection~\ref{subsec.ll-gen-ini-d-t-gen}  --  are compact. Then:
\begin{enumerate}[{\rm (1)}]
    \item\label{item-one} There is a spin diffeomorphism $F\colon Q_0\to Q_0$, and some $\sigma_0>0$  such that $M$ is spin diffeomorphic to  
 $$\quotientspace{Q_0\times [0,\sigma_0]}{(q,0)\sim \bigl(F(q),\sigma_0\bigr).}$$
 The pulled-back metrics $\hat g_\sigma$ on $Q_0 \times \{\sigma\}$ define a path of structured Ricci-flat metrics on $Q_0$ with $F^*\hat g_{\sigma_0}=\hat g_0$.
In particular, we have $\pi_1(M)\cong\pi_1(Q_0)\rtimes \ZZ$, where the action of $\ZZ$ on $\pi_1(Q_0)$ is defined by 
the map $\pi_1(F):\pi_1(Q_0)\to \pi_1(Q_0)$.
  \item\label{item-two} $M$ is finitely covered by a lightlike initial data manifold.
  \item\label{item-three} If $(g, K,\phi)$ is a  lightlike initial dats triple, \ie if additionally $\phi$ is a spinor on~$M$, then the metrics $\hat g_\sigma$ on $Q_0$ carry a non-zero parallel spinor.
\end{enumerate}
\end{theorem}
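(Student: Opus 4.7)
My plan is to cut $M$ open along a single leaf to realize it as a mapping torus for part (1), then apply the structure theory of closed structured Ricci-flat manifolds from Subsection~\ref{subsec.prelim.struc.ricciflat} to obtain a finite cover matching Definition~\ref{def.initial.data.manifold} for part (2), and finally exploit the fact that $\phi$ descends to $M$ in the non-generalized case to transport a parallel spinor to each $(Q_0,\hat g_\sigma)$ for part (3).

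For part (1), compactness of the leaves gives $\mathrm{im}(S)=\sigma_0\ZZ$ for some $\sigma_0>0$, so $s$ descends to $\bar s\colon M\to\RR/\sigma_0\ZZ$. Let $\Phi_t$ denote the flow of $U/\|U\|^2$; it satisfies $\bar s\circ\Phi_t=\bar s+t$, so the time-$\sigma_0$ map preserves $Q_0$ setwise, defining $F\coloneqq\Phi_{\sigma_0}|_{Q_0}\colon Q_0\to Q_0$ with its canonical spin lift coming from integrating the lift of the flow to the spin frame bundle. The map $(q,\sigma)\mapsto\Phi_\sigma(q)$ then descends to a spin diffeomorphism from the mapping torus $Q_0\times[0,\sigma_0]/(q,0)\sim(F(q),\sigma_0)$ (after replacing $F$ by $F^{-1}$ if needed to match the convention) onto $M$. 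The pulled-back metrics $\hat g_\sigma\coloneqq\Phi_\sigma^*g$ form a smooth path of structured Ricci-flat metrics on $Q_0$ by \cite[Thm.~4]{leistner.lischewski:19}, and the compatibility $F^*\hat g_{\sigma_0}=\hat g_0$ reflects that both sides encode the metric at the same points of $M$ after applying the flow. For the fundamental group, the fibration $Q_0\hookrightarrow M\xrightarrow{\bar s}S^1$ yields a short exact sequence $1\to\pi_1(Q_0)\to\pi_1(M)\to\ZZ\to1$: injectivity on the left uses that $\widetilde Q_0$ is simply connected, and the splitting follows from freeness of $\ZZ$, giving $\pi_1(M)\cong\pi_1(Q_0)\rtimes\ZZ$ with the action induced by $\pi_1(F)$.

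For part (2), Subsection~\ref{subsec.prelim.struc.ricciflat} applied to $(Q_0,\hat g_0)$ produces a finite normal cover $\pi\colon\widehat Q_0\to Q_0$ with an isometric splitting $\widehat Q_0=P\times\Torus[\ell]$, where $P$ is simply connected and carries a Ricci-flat metric with a nowhere vanishing parallel spinor. Since $\pi_1(Q_0)$ has only finitely many subgroups of any given finite index, a finite power $F^N$ preserves the subgroup $\pi_1(\widehat Q_0)\leq\pi_1(Q_0)$ and lifts to a spin diffeomorphism $\widehat F\colon\widehat Q_0\to\widehat Q_0$; passing to the corresponding $N$-fold cyclic cover of $M$, we may assume $N=1$. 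Pulling back the path $\hat g_\sigma$ to $\widehat Q_0$ gives a path of structured Ricci-flat metrics lying in a single connected component of the smooth moduli space; by the stability and product structure of this moduli space, a further finite power of $\widehat F$ can be arranged to respect the product decomposition, writing $\widehat F=(f,A)$ with $f$ a spin diffeomorphism of $P$ and $A\in\SL(\ell,\ZZ)$ (in $\SL$ rather than $\GL$ because $F$ is isotopic to the identity and hence orientation-preserving). The resulting finite cover of $M$ is then a lightlike initial data manifold in the sense of Definition~\ref{def.initial.data.manifold}.

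For part (3), if $\phi$ descends to $M$ then its restriction $\phi|_{Q_\sigma}$ is a smooth section of the intrinsic spinor bundle of $(Q_\sigma,g_\sigma)$, and the Leistner-Lischewski analysis combined with the imaginary $W$-Killing equation~\eqref{gen.imag.killing} and the lightlike annihilation $V_\phi\cdot\phi=0$ from Lemma~\ref{lem.lorentz-dirac.current.annih} shows that it is a nonzero parallel spinor on $(Q_\sigma,g_\sigma)$; pulling back by the isometric spin diffeomorphism $\Phi_\sigma\colon(Q_0,\hat g_\sigma)\to(Q_\sigma,g_\sigma)$ gives the asserted parallel spinor on $(Q_0,\hat g_\sigma)$. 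The main obstacle I foresee is the rigidity step in part (2): establishing, after passing to a finite power, that the lift $\widehat F$ respects the product decomposition $P\times\Torus[\ell]$ and acts on the torus factor by an $\SL(\ell,\ZZ)$-matrix requires careful use of the smoothness and product structure of the moduli space of structured Ricci-flat metrics along with the discreteness of compatible permutations of isotypic factors.
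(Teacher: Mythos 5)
Your strategy is essentially the paper's: cut along a leaf using the flow of $U/\|U\|^2$ to get the mapping-torus description, invoke \cite[Theorem~4]{leistner.lischewski:19} (via Corollary~\ref{cor:contraints.valid}) for the structured Ricci-flatness of the leaf metrics and for the parallel spinor in part~\eqref{item-three}, and in part~\eqref{item-two} pass to the finite cover on which $\witi Q_0=(P,h_0)\times(\RR^\ell,\geucl)$ descends to a product with a torus, using finiteness of the relevant collection of finite-index subgroups to make a power of $\pi_1(F)$ preserve the chosen subgroup. Two small points: the paper corrects $F=\Phi_{\sigma_0}$ by an auxiliary family of spin diffeomorphisms $\Psi_t$ so that its lift is basepoint-preserving and $\pi_1(F)$ is honestly defined (your "replace $F$ by $F^{-1}$ if needed" does not address this), and the invariant subgroup is taken to be $\Gamma_2=\bigcap_{j=0}^{r-1}\pi_1(F)^{-j}(\Gamma_1)$ rather than passing to a cyclic cover of $M$ --- but these are cosmetic.

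The one step you leave open is the final rigidity claim in part~\eqref{item-two}, and there your proposed mechanism is not the right one. The paper does \emph{not} need a further finite power of $\wihat F$, nor any discreteness argument in the moduli space, to split the return map: it first uses \cite{ammann.kroencke.weiss.witt:19} to produce spin diffeomorphisms $\Psi_\sigma\colon P\times\Torus[\ell]\to Q_0'$ putting the \emph{whole path} of leaf metrics into product form $(P,h_\sigma)\times(\Torus[\ell],\<\platz,\platz\>_\sigma)$, and then concludes that the induced isometry $F'$ from $(P,h_{\sigma_0})\times(\Torus[\ell],\<\platz,\platz\>_{\sigma_0})$ to $(P,h_0)\times(\Torus[\ell],\<\platz,\platz\>_0)$ must be of the form $f\times A$ simply because the metrics $h_\sigma$ are nowhere flat (Ricci-flat metrics are analytic, so non-flatness propagates), so any isometry has to match the non-flat de Rham factor with the non-flat factor and the flat factor with the flat factor. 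That elementary splitting argument is the ingredient your sketch is missing; once you have it, your appeal to "stability and product structure of the moduli space" is only needed for the existence of the trivializing family $\Psi_\sigma$, not for the splitting of $\wihat F$. (Your justification that $A\in\SL(\ell,\ZZ)$ because "$F$ is isotopic to the identity" is also not available --- $\Phi_t$ does not preserve $Q_0$ for $0<t<\sigma_0$ --- orientation-preservation comes from $F'$ being a spin diffeomorphism.)
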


\begin{proof}
  In case that the leaves are compact, then the image of $S$ is a discrete subgroup of $\RR$, and let $\sigma_0>0$ be the positive real number with $\image S=\sigma_0\ZZ$. The map $Q_0\times [0,\sigma_0]\to M$, $(x,t)\mapsto \Phi_t(x)$ yields a spin diffeomorphism as claimed in \eqref{item-one}
-- yet still without the property that $F = \Phi_{\sigma_0}$ preserves the basepoint (so that $\pi_1(F)$ is well-defined). This can be fixed by considering $(x, t) \mapsto \Phi_t \circ \Psi_t^{-1}(x)$ instead, for a suitable family of spin diffeomorphisms $\Psi_t$ to be defined in the proof of \eqref{item-two} below.
We now apply
\cite[Theorem~4]{leistner.lischewski:19} which is possible as Equation (1.8) in  \cite{leistner.lischewski:19}  -- which essentially agrees with our Equation \eqref{eq:SpinConstr2} -- is satisfied, see Corollary~\ref{cor:contraints.valid}. It follows from there that
$g_\sigma$ and thus $\hat g_\sigma$ is structured Ricci-flat, and with the stronger assumption in \eqref{item-three} these metrics carry a non-zero parallel spinor.
Thus \eqref{item-one} and \eqref{item-three} are proven.

It remains to prove \eqref{item-two}. As $(Q_0,g_0)$ is structured Ricci-flat and compact
its universal covering $(\witi Q_0,\tilde g_0)$
is isometric to $(P,h_0)\times (\RR^\ell,\geucl)$,
where $(P,h_0)$ is a simply-connected compact structured Ricci-flat manifold with a finite isometry group $\Isom(P,h_0)$. We will fix a base-point $p$ of $\wihat Q_0$, mostly suppressed in notation, in order to identify deck transformations of $\wihat Q_0$ and $\wihat M$ with elements in $\pi_1(Q_0)$ and $\pi_1(M)$. We determine a smooth family of spin diffeomorpisms $\Psi_t:Q_0\to Q_0$, $\Psi_0=\id_{Q_0}$, with lift  $\witi\Psi_t:\witi Q_0\to \witi Q_0$, $\witi\Psi_0=\id_{\witi Q_0}$,  such that $\witi\Psi_{\sigma_0}(p)=\witi\Phi_{\sigma_0}(p)$.
We choose $F\definedas \Phi_{\sigma_0} \circ \left(\Psi_{\sigma_0}\right)^{-1}$
whose lift  $\witi F\definedas \witi\Phi_{\sigma_0} \circ \left(\witi\Psi_{\sigma_0}\right)^{-1}$ is base point preserving. Then the induced map $\pi_1(F)$ is an automorphism of $\pi_1(Q_0)$. Note that $\witi F$ of $F$ is a deck transformation and thus gives rise to some $\beta_F\in \pi_1(M)$.
We obtain $\pi_1(M)=\pi_1(Q_0)\rtimes \ZZ$ where $\beta_F$ generates $\ZZ$.

We have $\pi_1(Q_0)\subset \Isom(\witi Q_0)=\Isom(P,h_0)\times (\RR^\ell\rtimes \O(r))$
as isometries of $\witi Q_0$ have to map the flat factor to the flat factor and the factor with the nowhere flat metric $h_0$ to itself as well. The projection  $\Pi_2(\pi_1(Q_0))\in\RR^\ell\rtimes \O(\ell)$ of $\pi_1(Q_0)$ acts discretely and cocompactly on  $\RR^\ell$, and thus it is a
 cristallographic group. 
 Hence,  $\Pi_2(\pi_1(Q_0))$ contains a (unique) maximal free abelian group $A_0$ of rank $\ell$ of finite index whose deck transformations act isometrically on $\RR^\ell$ by translations, and we know that there is a short exact sequence $1\to A_0\to \Pi_2(\pi_1(Q_0))\to T\to 1$ for some finite group~$T$. The translations in $A_0$ actually come with a lift to the spin structure; let $A_1$ be the subgroup in $A_0$ of those elements that have a trivial lift; it either is of index~$1$ or~$2$. We define $\Gamma_0:=\Pi_2^{-1}(A_1)$, which is a normal subgroup of finite index in $\pi_1(Q_0)$.
 As $\Isom(P,h_0)$ is finite, there is a finite index subgroup $\Gamma_1$ in $\Gamma_0$, normal in $\pi_1(Q_0)$,
 such that $\Gamma_1$ acts trivially on $P$.

Let $\iota$ be the  (finite) index of $\Gamma_1$ in $\pi_1(Q_0)$.
We do not know whether $\pi_1(F)$ preserves the subgroup $\Gamma_1$.

Abelian subgroups of rank $\ell$ are contained in $\Isom(P,h_0)\times A_0$, and thus there are only finitely many.
Thus there are finitely many free abelian subgroups of index $\iota$ in $\pi_1(Q_0)$.
 The group $\pi_1(F)$ acts on set of free abelian subgroups of index $\iota$.

Thus there is a number $r>0$ with $\pi_1(F)^r(\Gamma_1)=\Gamma_1$. We set
  $$\Gamma_2\definedas \Gamma_1\cap\pi_1(F)^{-1} (\Gamma_1)\cap\cdots\cap \pi_1(F)^{-(r-1)} (\Gamma_1),$$
which is of finite index in $\Gamma_1$ and thus a $\pi_1(F)$-invariant finite index subgroup of $\pi_1(Q_0)$. As a consequence the semi-direct
product $\Gamma_2\rtimes \ZZ$ given by this $\pi_1(F)$-action is a finite index subgroup of $\pi_1(M)$.

Now $M'\definedas\witi M/(\Gamma_2\rtimes \ZZ)$ is a finite covering of $M$ satisfying
all properties of $M$ mentioned so far, but with the additional
properties that its leaf $Q'_0\definedas \witi Q_0/\Gamma_2$ is isometric to the product of $(P_0,h_0)$ and a flat torus of dimension $\ell$, and that the leaf carries a non-trivial parallel spinor. Pulling back with the corresponding flow $\Phi_t'$ we obtain a family of structured Ricci-flat metrics $g_\sigma'$
on $Q'_0$, and  because of results from \cite{ammann.kroencke.weiss.witt:19}
there are spin diffeomorphisms $\Psi_\sigma:P\times (\Torus[\ell])\to Q'_0$,
such that $\Psi_\sigma^*g_\sigma'$ is the product metric on $(P,h_\sigma)\times (\Torus[\ell],\<\platz,\platz\>_\sigma)$, where $\<\platz,\platz\>_\sigma$ is a smooth family of scalar product on the vector space $\RR^\ell$.

The manifold $M'$ is obtained as
$$\quotientspace{P\times (\Torus[\ell])\times [0,\sigma_0]}{(q,0)\sim \bigl(F'(q)),\sigma_0\bigr)}$$
for some spin diffeomorphism $F'\colon P\times (\Torus[\ell])\to P\times (\Torus[\ell])$ with $F'^*\left(h_{\sigma_0}\times \<\platz,\platz\>_{\sigma_0}\right)=h_0\times \<\platz,\platz\>_0)$ -- not necessarily preserving the basepoint.
As the metrics $h_\sigma$ are nowhere flat, this implies that $F'$ is of the form
$f\times A$  where $f$ is a spin diffeomorphism of $P$ with $f^*h_{\sigma_0}=h_0$ and where $A\in \SL(\ell,\ZZ)$ is a linear diffeomorphism of $\Torus[\ell]$,  as requested in Definition~\ref{def.initial.data.manifold}. Statement \eqref{item-two} then follows.
\end{proof}

In particular, the fundamental group of a closed manifold carrying a lightlike generalized initial data triple with compact leaves is virtually solvable of derived length at most $2$.

\subsection{More results for all lightlike generalized initial data triples}\label{subsec.ll-gen-ini-d-t-compact-noncpct}

In this subsection we will analyze closed manifolds $M$ with
a lightlike generalized initial data triple with non-compact leaves. We will
show that also in this case the fundamental is virtually solvable of derived length $2$. The situation is in some sense more rigid, \eg the denseness of the leaves implies that all fibers are isometric.

We assume further -- but unfortunately we cannot yet prove it -- that such triples with non-compact leaves can be deformed to ones with compact leaves, and thus Properties \eqref{item-one} and \eqref{item-two} of Theorem~\ref{thm:WdecInv-new-cpct} would follow as well.

Before specializing to the non-compact case, we prove a Proposition that holds both for compact and non-compact leaves.
\begin{proposition}\label{prop:Qtilde.struc}
  We assume that $\maF=(Q_\sigma)_{\sigma\in \RR}$ is a foliation of a closed connected manifold~$M$ obtained from a generalized initial data triple with induced Riemannian metrics~$g_\sigma$ and universal coverings $\witi Q_\sigma$, as defined at the end of Subsection~\ref{subsec.ll-gen-ini-d-t-gen}.
  Then, there is a number $\ell\in \{0,1,\ldots,\dim M-1\}$ such that for any $\sigma\in \RR$ the manifold
  $(\witi Q_\sigma,\ti g_\sigma)$ is isometric to $(P,h_\sigma)\times (\RR^\ell,\geucl)$ where $(P,h_\sigma)$ is a simply-connected compact structured Ricci-flat manifold
  with a finite isometry group $\Isom(P,h_\sigma)$.
Furthermore, there is a numbers $k\in \NN_0$, $k\leq \ell$ such that for any $\sigma\in \RR$ the manifold
$(Q_\sigma,g_\sigma)$ is isometric to $(R,\hat h_\sigma)\times (\RR^k,\geucl)$ with some compact spin manifold $R$.
The Riemannian manifold $(R,\hat h_\sigma)$ is finitely covered by the Riemannian product of  $(P,h_\sigma)$ and some flat torus $(L_\sigma\backslash \RR^{\ell-k}, \geucl)$, where $L_\sigma$ is a lattice in $\RR^{\ell-k}$. If the leaves of $\maF$ are non-compact, then we may arrange the isometries such that $h_\sigma$, $\hat h_\sigma$ and $L_\sigma$ do not depend on $\sigma$.
\end{proposition}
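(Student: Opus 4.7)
The plan is to combine the Leistner--Lischewski identification of each leaf as a structured Ricci-flat manifold with the classical holonomy classification for parallel spinors, and then to analyze the descent to the leaf itself by means of Bieberbach's theorem, synchronizing everything in $\sigma$ at the end.

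First I would invoke \cite[Theorem~4]{leistner.lischewski:19}, exactly as in the proof of Theorem~\ref{thm:WdecInv-new-cpct} (with the spin-constraint equations supplied by Corollary~\ref{cor:contraints.valid}), to conclude that each $g_\sigma$ is structured Ricci-flat. Since the foliation sits inside the compact manifold $M$ and has uniformly bounded second fundamental form, each leaf $(Q_\sigma, g_\sigma)$ is geodesically complete, and so is its universal cover $(\witi Q_\sigma, \ti g_\sigma)$. The de Rham decomposition theorem together with the holonomy classification recalled in Subsection~\ref{subsec.prelim.struc.ricciflat} then gives an isometric splitting
\[
  (\witi Q_\sigma, \ti g_\sigma) \cong (P_\sigma, h_\sigma) \times (\RR^{\ell_\sigma}, \geucl)
\]
in which $(P_\sigma, h_\sigma)$ has restricted holonomy a product of factors $\SU(k_i)$, $\Sp(k_i)$, $G_2$, $\Spin(7)$ and carries no parallel vector field. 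Finiteness of $\Isom(P_\sigma, h_\sigma)$ then follows from Bochner: every Killing field on a compact Ricci-flat manifold is parallel, and a parallel vector field would contradict the absence of a flat de Rham factor in $P_\sigma$.

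To promote this to compactness of $P_\sigma$ and $\sigma$-independence of $\ell_\sigma$, I would distinguish cases. If $Q_\sigma$ is compact, compactness of $P_\sigma$ is immediate from the structure theorem for closed structured Ricci-flat manifolds recalled in Subsection~\ref{subsec.prelim.struc.ricciflat}. Otherwise the argument uses the Lorentzian extension $(\ol M, \ol g) = (M \times \RR, \ol g)$ from Theorem~\ref{thm:SolCauchyP}: the parallel lightlike spinor on $\ol M$ restricts the Lorentzian restricted holonomy to the stabilizer of a lightlike line, and transferring cocompactness of the $\pi_1(M)$-action on $\witi M = \witi Q_0 \times \RR$ through this structure should force the special-holonomy factors of $\witi Q_\sigma$ to be compact. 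Constancy of $\ell_\sigma$ in $\sigma$ then follows from smoothness of the family $\ti g_\sigma$ combined with the openness and local rigidity of the (pre-)moduli space of structured Ricci-flat metrics from \cite{kroencke:15, ammann.kroencke.weiss.witt:19}, which makes $\ell_\sigma$ locally constant, hence constant in $\sigma \in \RR$; writing $P_\sigma \eqqcolon P$ then makes sense.

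To descend from $\witi Q_\sigma$ to $Q_\sigma$, I would note that the absence of a flat de~Rham factor in $(P, h_\sigma)$ forces $\Isom(\witi Q_\sigma) = \Isom(P, h_\sigma) \times \Isom(\RR^\ell)$, so that $\pi_1(Q_\sigma)$ embeds into $\Isom(P, h_\sigma) \times (\RR^\ell \rtimes \O(\ell))$. Let $\Lambda_\sigma \subset \RR^\ell$ denote the translational part of the projection onto the second factor and put $V_\sigma \definedas \spann_\RR(\Lambda_\sigma)$ of dimension $\ell - k_\sigma$. The $\pi_1(Q_\sigma)$-action is trivial on the orthogonal complement $V_\sigma^\perp$, producing an isometric decomposition $Q_\sigma \cong R_\sigma \times (V_\sigma^\perp, \geucl)$ with $R_\sigma = (P \times V_\sigma)/\pi_1(Q_\sigma)$. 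Bieberbach's theorem applied to the crystallographic projection onto $\Isom(V_\sigma)$ supplies a finite-index subgroup whose translational part is a full-rank lattice $L_\sigma \subset V_\sigma$, giving the claimed finite cover of $R_\sigma$ by $(P, h_\sigma) \times (L_\sigma\backslash V_\sigma, \geucl)$; cocompactness of $L_\sigma$ and finiteness of $\Isom(P, h_\sigma)$ force $R_\sigma$ to be compact. Finally, when the leaves are non-compact, $\image S \subseteq \RR$ is dense, so each $\gamma \in \pi_1(M)$ acts isometrically on $\witi M$ in a way that sends $(\witi Q_0, \ti g_0)$ isometrically to $(\witi Q_{S(\gamma)}, \ti g_{S(\gamma)})$; density of $\image S$ and continuity of $\sigma \mapsto \ti g_\sigma$ then show that all $(\witi Q_\sigma, \ti g_\sigma)$ are mutually isometric, and uniqueness of the de~Rham decomposition up to permutation of irreducible factors lets one choose synchronized splittings in which $h_\sigma$, $\hat h_\sigma$ and $L_\sigma$ are $\sigma$-independent.

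The principal obstacle, to my mind, is the compactness of $P_\sigma$ in the non-compact leaf case: without further input, a complete simply-connected Ricci-flat manifold with parallel spinor need \emph{not} be compact (e.g.\ ALE Calabi--Yau spaces), and the required compactness has to be extracted from the global constraint that the leaves sit inside a compact manifold carrying a parallel lightlike Lorentzian spinor. Making this precise will likely require a careful interplay between the Baum--Leistner--Lischewski analysis of lightlike parallel spinors and the stability theory of structured Ricci-flat metrics.
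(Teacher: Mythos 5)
Your outline matches the paper's broad strategy (Leistner--Lischewski to get structured Ricci-flat leaves, a de~Rham/Cheeger--Gromoll splitting of $\witi Q_\sigma$, Bieberbach for the descent to $Q_\sigma$), but the step you yourself flag as the principal obstacle --- compactness of $P_\sigma$ when the leaves are non-compact --- is a genuine gap, and the route you gesture at (Lorentzian holonomy of the ambient $\ol M$ plus stability of structured Ricci-flat metrics) is not how it can be closed. The paper's argument is purely Riemannian and uses the compactness of the ambient $M$ directly: if $P_\tau$ were non-compact it would contain arbitrarily long minimizing geodesics $\gamma_L$; the unit tangent vectors $v_L=\dot\gamma_L(0)$, viewed in the compact unit tangent bundle of $M$ and tangent to the foliation, subconverge to some $v\in TQ_{\hat\sigma}$; the geodesic through $v$ lifts to a line in $\witi Q_{\hat\sigma}$ orthogonal to the $\RR^\ell$-factor, and Cheeger--Gromoll then splits off an extra euclidean line from $P_{\hat\sigma}$, contradicting maximality of $\ell$. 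Note this argument needs the constancy of $\ell_\sigma$ \emph{first}, which the paper gets not from moduli-space rigidity (those results concern closed manifolds, and at this stage $P_\sigma$ is only known to be complete) but from analyticity of Ricci-flat metrics together with denseness of the leaves: $\ell_\sigma$ is characterized by the maximal flat disc splitting off locally at any point, and this local property propagates to all leaves by density. Your proposed "local constancy via openness of the pre-moduli space" does not apply here.

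Two further steps are substantially under-argued relative to what the statement requires. First, the assertion that all $(\witi Q_\sigma,\ti g_\sigma)$ are mutually isometric with isometries depending \emph{smoothly} on $\sigma$ is the content of Lemma~\ref{lem.modified.diffeos}, whose proof occupies Appendix~\ref{appendix.lemma.isom} and relies on the Taylor-development covering machinery of Appendix~\ref{appendix.smoothness}; "density of $\image S$ and continuity" alone gives isometries only after passing to subsequences, with no control on their dependence on $\sigma$. Second, the $\sigma$-independence of $L_\sigma$ is not a formal consequence of uniqueness of the de~Rham decomposition: the paper must show that the affine conjugations $A_\sigma$ relating the lattice actions for different $\sigma$ can be taken orthogonal, which it does by comparing the continuous family $A_\sigma$ with the discrete orthogonal conjugations $B_{\sigma,t}$ available for $t\in\image S$ and invoking the rigidity of lattices under nearly-isometric conjugation. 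Without these ingredients the final clause of the proposition is not established.
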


\begin{proof}
In the case of compact leaves, the proposition immediately follows from
Theorem~\ref{thm:WdecInv-new-cpct} with $k=0$. We thus assume that the leaves are non-compact. As a result every leaf is dense.  Again, we may
apply \cite[Theorem 4]{leistner.lischewski:19} as \cite[(1.8)]{leistner.lischewski:19}  resp.\ \eqref{eq:SpinConstr2} will be satisfied, see Corollary~\ref{cor:contraints.valid}. Thus the induced metric $g_\sigma$ on any leaf is a structured Ricci-flat metric.

For every $\sigma\in \RR$ determine the maximal number $\ell_\sigma$ such that  $(\witi Q_\sigma,\ti g_\sigma)$ is isometric to the Riemannian product
$(P_\sigma,h_\sigma)\times (\RR^{\ell_\sigma},\geucl)$ where $(P_\sigma,h_\sigma)$ is a complete simply connected Ricci-flat manifold. Because of the Cheeger--Gromoll splitting theorem  $(P_\sigma,h_\sigma)$ does not contain a line. Recall that a line is defined as a distance minimizing geodesic, parametrized by arclength and defined on $\RR$.  If $P_\sigma$ is not a point, $h_\sigma$ must not be flat, and as Ricci-flat manifolds are analytic, it is non-flat on every non-empty open subset. Thus for every number $\rho>0$ and every point $p\in \witi Q_\alpha$, the number $\ell_\sigma$ coincides with the maximal number such that there is an isometry of an open neighborhood of $p$ in $\witi Q_\alpha$ to some Riemannian product $(P',h')\times \bigl(B_\rho(p,\RR^{\ell_\sigma}),\geucl\bigr)$, mapping $p$ to some $(p',0)$.

By density of the leaf $\witi Q_\sigma$, a flat disc $\bigl(B_\rho(p,\RR^{\ell_\sigma}),\geucl\bigr)$ splits off in any point in \emph{any} leaf, and thus the number $\ell=\ell_\sigma$ does not depend on $\sigma$. It follows that $\dim P_\sigma=n-\ell-1$. 
We now argue that $P_\sigma$ is compact for any $\sigma$. So let us assume that $P_\tau$ is not compact for some $\tau$ and thus of infinite diameter.
Then for all $L>0$ we have distance minimizing
geodesics $\gamma_L:[-L,L]\to P_\tau$, parametrized by arc-length.
The image of $(\dot\gamma_L(0),0)\in T_{(\gamma_L(0),0)}(P_\tau\times \RR^\ell)\subset T\witi Q_\tau$ in $TM$ is a unit length vector~$v_L$, tangent to the foliation $(Q_\sigma)_{\sigma\in \RR}$. Thus there is a sequence of $L_i\to \infty$, such that $v_{L_i}$ converges to some unit vector~$v$ for $i\to \infty$, and we have $v\in TQ_{\hat \sigma}$ for some $\hat \sigma\in \RR$.

Consider the geodesic $\gamma_\infty:\RR\to Q_{\hat \sigma}$ in $(Q_{\hat \sigma},g_{\hat \sigma})$ with $\dot\gamma_\infty(0)=v$.
Due to a standard convergence argument, it lifts to a line in $\witi Q_{\hat\sigma}$, and this line it is orthogonal to the $\RR^\ell$-component.  Applying the Cheeger splitting theorem, we conclude that  $(P_{\hat \sigma},h_{\hat \sigma})$ is a Riemannian product with a euclidean line. This is in contradiction to the maximality of $\ell$.


In general the diffeomorphisms $\Phi_t$ are no isometries, neither isometries of $M$ nor isometries from $Q_\alpha$ to $Q_{\alpha+t}$, and their lifts $\witi\Phi_t$ do not necessarily preserve the product structure.\footnote{We are able to construct examples, where it does not.} However the following lemma provides suitable isometries on the level of the universal coverings.

\begin{lemma}\label{lem.modified.diffeos}With the assumptions of the proposition and additionally assuming non-compact leaves, we obtain that
 for every $\sigma\in \RR$, $(P_\sigma,h_\sigma)$ is isometric to $(P_0,h_0)=:(P,h)$.
 There is a $1$-parameter group of isometries\/ $\Xi_\sigma:(P,h)\times (\RR^\ell,\geucl)\to (\witi Q_\sigma,\witi g_\sigma)$, depending smoothly on $\sigma\in \RR$.
\end{lemma}
The proof of the lemma will be given in the Appendix~\ref{appendix.lemma.isom}.

The group $\Gamma:=\ker S$ acts on $\witi Q_\sigma$, and we have  $Q_\sigma=\Gamma\backslash\witi Q_\sigma$.
Using the isometry $\Xi_\sigma$ we obtain an isometric, free and properly discontinuous action of $\Gamma$ on $P\times \RR^\ell$ which depends smoothly on $\sigma$. We want to split $\RR^{\ell}=V_1\oplus V_2$ orthogonally as affine euclidean spaces into a summand $V_1$ on which $\Gamma$ acts trivially, and into a summand $V_2$ with a cocompact action of $\Gamma$ on $P\times V_2$. This splitting may depend on $\sigma$.
We choose a maximal number $k=k(\sigma)$ such that $(Q_\sigma,g_\sigma)$ is isometric to the Riemannian product $(R_\sigma,\hat h_\sigma)\times (\RR^k,\geucl)$. We proceed similarly as above, and we will show first  that  $k$ does not depend on $\sigma$ and then that $R_\sigma$ is a compact manifold, whose diffeomorphism type  does not depend on $\sigma$ -- we then have $V_1\cong \RR^k$ and $R_\sigma=\Gamma\backslash (P\times V_2)$.

The number $k(\sigma)$ can be characterized as follows. Given $x\in Q_\sigma$ we consider maps $\iota_{x,\kappa}:\RR^\kappa\to Q_\sigma$, $0\mapsto x$, such that they are isometric embeddings in the sense of metric spaces. This implies that  $\iota_{x,\kappa}$ maps affine lines in $\RR^\kappa$ to lines (\ie length minimzing geodesics) in $(Q_\sigma,g_\sigma)$. If such a map $\iota_{x,\kappa}$ exists, then the Cheeger splitting theorem allows to split off a factor $\RR^\kappa$, and thus $\kappa\leq k(\sigma)$. Conversely, it is obvious, that a map $\iota_{x,\kappa}$ exists through $x$ with $\kappa=k(\sigma)$. Thus $k(\sigma)$ is the maximal such $\kappa$. We also see the map $\iota_{x,k}$ through any $x\in M$ is unique up to rotations.

Now for any $\tau\in \RR$ and $y\in Q_\tau$
consider a sequence of $x_i\in Q_\sigma$ with $x_i\to y$ in $M$. Choose $\iota_{x_i,k(\sigma)}$ as above, and after passing to a subsequence, they will converge to an isometric embedding $\iota_{y,k(\sigma)}$ through $y$, and this implies $k(\tau)\geq k(\sigma)$ for all $\sigma,\tau\in \RR$, \ie $k$ is constant.

If there is a $\sigma \in \RR$ with $R_\sigma$  non-compact, then we may obtain a line in the limit orthogonal to the $\RR^k$-factor, similar to before and thus a contradiction. The factors $R_\sigma$ may be identied as the leaves of a foliation $\FF_R$ given by the images of the orthogonal complements to the images of map $\iota_{x,k}$. All leaves are compact, and by shifting in the $\RR^k$-direction or by using the flow $\Phi_t$, we obtain isomorphisms of the fundamental group of the leaves. Standard  techniques
about foliations \cite{lawson:77}, \cite[Chap. II]{moore.schochet:06}
imply that all leaves are diffeomorphic and that this is foliation is an $R$-manifold bundle.

The orthogonal decomposition $\RR^\ell=V_1\oplus V_2$, with $V_1\cong \RR^k$ a priori depends $\sigma$. However as the action is smooth in $\sigma$ the decomposition is smooth, and thus $\Xi_\sigma$ can be rearranged such that $\RR^\ell=\RR^{\ell-k}\oplus \RR^k$ is the standard decomposition.

As $\gamma\in \Gamma$ acts isometrically on $P\times \RR^{\ell-k}$ it has to preserve its product structure, \ie $\gamma$ acts as
$\rho^\sigma(\gamma)=\rho_P(\gamma)\times \rho_2^\sigma(\gamma)$, where $\rho_P(\gamma)\in \Isom(P,h)$ and $ \rho_2^\sigma(\gamma)\in\Isomaff(V_2)$.
Due to the finiteness of $\Isom(P,h)$, $\rho_P(\gamma)$ is constant in $\sigma$, 
the subgroup $\{\gamma\in \Gamma\mid \rho_P(\gamma)=\id\}$ has finite index, and it is a cristallographic group which acts freely, properly discontinuously and cocompactly on $\RR^{\ell-k}$, and thus its maximal abelian subgroup $\Gamma_1$ is isomorphic to $\ZZ^{\ell-k}$, acts by translations (depending on $\sigma$) on $\RR^{\ell-k}$ and is of finite index in $\Gamma$. Obviously for different values of $\sigma$ the actions of $\Gamma_1$ are affinely conjugated to each other and each such conjugation extends to a conjugation of the actions of the full groups $\Gamma$. Soon, we will show that these conjugations may be chosen in $\O(\ell-k)$ and thus also $(R_\sigma,\hat h_\sigma)$ are isometric, and we write $(R,h):=(R_0,h_0)$. Furthermore this implies that we may choose the maps $\Xi_\sigma:P\times \RR^{\ell-k}\times \RR^k\to \witi Q_\sigma$ equivariantly for a fixed cristallographic isometric action of $\Gamma$ on $P\times \RR^{\ell-k}$. Thus, $\Xi_\sigma$ is the lift of a family of isometries $\xi_\sigma: (R,\hat h)\times (\RR^k,\geucl)\to (Q_\sigma,g_\sigma)$ depending smoothly on $\sigma$.

It remains to show that the translations are conjugated in $\O(\ell-k)$. Note that $(R_\sigma,\hat h_\sigma)$ is isometric to $\rho^\sigma(\Gamma)\backslash (P\times \RR^{\ell-k})$, and thus its minimal covering by a product of $P$ with a torus is isometric to $\rho^\sigma(\Gamma_1)\backslash (P\times \RR^{\ell-k})$. 
One the other hand, along $Q_\sigma$ all the $R$-factors are isometric. For $t\in \image S$ we have  $Q_\sigma=Q_{\sigma+t}$, and thus $(R_{\sigma+t},\hat h_{\sigma+t})$ is isometric to  $(R_\sigma,\hat h_\sigma)$,
and hence their minimal coverings by  a product of $P$ and a torus as well. We have derived  that  $\rho_2^{\sigma+t}(\Gamma_1)\backslash \RR^{\ell-k}$ and $\rho_2^\sigma(\Gamma_1)\backslash \RR^{\ell-k}$ are isometric. Thus there are matrices $B_{\sigma,t}\in \O(\ell-k)$, $t\in \image S$, $\sigma\in \RR$ with 
$$\rho_2^{\sigma+t}(\Gamma_1)=B_{\sigma,t}\cdot \rho_2^\sigma(\Gamma_1) \cdot (B_{\sigma,t})^{-1}.$$

On the other hand, $\rho_2^\sigma$ depends smoothly on $\sigma$ and thus there are matrices
$A_\sigma\in \GL(\ell-k,\RR)$, $A_0=1$, smooth in $\sigma$, such that
$$\rho_2^\sigma(\gamma)=A_\sigma\cdot \rho_2^0(\gamma) \cdot (A_\sigma)^{-1} \quad\forall\gamma\in\Gamma_1.$$

In combination we see that conjugation with $A_\sigma\cdot (A_{\sigma+t})^{-1}\cdot B_{\sigma,t}$ preserves the lattice $\rho_2^\sigma(\Gamma_1)$.
Now every lattice $L$ has the property: there is an $\epsilon(L)>0$ such that: if $F$ is a matrix with $FLF^{-1}=L$ and $|\<F(X),F(Y)\>-\<X,Y\>|\leq \epsilon(L)\|X\|\,\|Y\|$, then $F$ is orthogonal. Thus, there is an $\epsilon_\sigma>0$, such that for $|t|<\epsilon_\sigma$ we have 
$A_\sigma\cdot (A_{\sigma+t})^{-1}\cdot B_{\sigma,t}\in \O(\ell-k)$, thus 
$A_\sigma\cdot (A_{\sigma+t})^{-1}\in \O(\ell-k)$.
By a compactness argument we obtain $A_\sigma\in \O(\ell-k)$ for all $\sigma\in \RR$.

We finally define the $(\ell-k)$-dimensional Riemannian torus $T:=\Gamma_1\backslash \RR^{\ell-k}$ by the action of one of the isometric $\rho_2^\sigma$ actions.
As $\Gamma_1$ has finite index in $\Gamma$, we get a finite covering $P\times T\to R=\Gamma\backslash (P\times \RR^{\ell-k})$.
\end{proof}

\begin{remarks}
  \begin{enumerate}[{\rm (1)}]
  \item   If $t\in \image S$, then $Q_\sigma=Q_{\sigma+t}$. Thus $\xi_\sigma$ and $\xi_{\sigma+t}$
    are isometries between the same manifolds, however they do not coincide.
  \item In general not every automorphism of $\Gamma$ will preserve $\Gamma_1$, \eg we can construct examples with $\Gamma\cong\ZZ^k\times (\ZZ/2\ZZ)$, in which case there are $2^k$ maximal free abelian subgroups, and $\Aut(\Gamma)$ acts transitively on the set of such subgroups. However every automorphism of $\Gamma$ which preserves the product structure $\Isom(P,h)\times \Isomaff(\RR^{\ell-k})$ will preserve $\Gamma_1$ as it is the unique maximal free abelian subgroup in the Bieberbach group $\Gamma\cap(\{1\}\times \Isomaff(\RR^{\ell-k}))$.
\item The isometry group of $(R,\hat h_\sigma)$ is compact and fits into a short exact sequence
         \begin{align} \label{eq:G0Ses} 0\to \ZZ^m\backslash\RR^m\to \Isom(R,\hat h_\sigma)\to \wihat\Gamma_\sigma\to 1, \end{align}
where $\wihat\Gamma_\sigma$ is a finite group and $m\leq \ell-k$ is the dimension of the space of Killing vector fields on $(R, \hat h_\sigma)$.
\item Later on, we will see that, if $k$ is defined as above, then the rank of the free abelian group $\image(S)$ equals to $k+1$.  
\end{enumerate}
\end{remarks}

After fixing a base point, deck transformations may be identified with elements in $\pi_1(M)$. Using the notation of the proof of the proposition we
obtain a short exact sequence 
$$0\lto \Gamma \lto \pi_1(M) \xrightarrow{\;\, S\;\,}\image S\lto 0.$$
One can show with group theoretical methods, that this implies that $\pi_1(M)$ is virtually solvable.%
\ifdraft

\textbf{Draftversion:} For a proof we refer to Appendix~\ref{app.group}.
\else
We will omit a proof as we will obtain a stronger version of this statement later in the non-compact case, and as it is already shown in the compact case.
\fi

\subsection{Further results in the case of non-compact leaves}\label{subsec.ll-gen-ini-d-t-noncpct}

We want add some further structural results which only hold in the case of non-compact (i.e. dense) leaves.
We write $\hat h$ instead of $\hat h_\sigma$, $h$ instead of $h_\sigma$, and $L_\sigma=L$ from now on.

We have seen that the maps $\xi_\sigma$ constructed above provide an isometry from the product  $(R,\hat h)\times \RR^k$ onto the leaf $Q_\sigma$ of $\maF$, and furthermore it is easy to see that
  $$(R,\hat h)\times \RR^k\times \RR\to M,\quad (x,y,\sigma)\mapsto \xi_\sigma(x,y)$$
is a smooth covering. Thus the family of  submanifolds of $(M,g)$ that are isometric to $(R,\hat h)$ and that are contained in $Q_\sigma$ for some $\sigma$ form a smooth foliation $\FF_R$ of~$M$, and obviously this is a manifold bundle over some closed manifold $B$. The manifold $B$ may be defined as the space of leaves of $\FF_R$; any leaf of $\FF_R$ is of the form $\xi_\sigma(R\times\{y\})$ for some $(y,\sigma)\in \RR^k\times \RR$, and the differential structure is such that
\begin{equation}\label{eq:cov.B}
  \RR^k\times \RR\to B,\quad (y,\sigma)\mapsto \xi_\si(R\times\{y\})
\end{equation}
is a covering by local diffeomorphisms.

We also study this from the perspective of principal bundles.
Let $\maT$ be the space of isometric embeddings $(R,\hat h) \to (M, g)$ with the property that their image lies in some leaf $Q_\sigma$.
(Note that this implies that its image has an orthogonal complement in this $Q_\sigma$ that is isometric to $(\RR^k,\geucl)$.)
The group $G\definedas \Isom(R,\hat h)$ acts on $\maT$ by precomposition, and one easily sees that this action turns $\maT$ into a principal bundle over the closed manifold $B$ discussed above. The evaluation map $\maT\times R\to M$ yields a diffeomorphism from $\maT\times_G R$ to $M$.

Let $G_1$ be the minimal open subgroup of $G$ such that the principal bundle $\maT$ can be reduced to $G_1$, in other words: $\dim G=\dim G_1$, there is a $G_1$-principal bundle $\maT_1\to B$ such that $\maT=\maT_1\times_{G_1}G$, and $G_1$ is minimal among subgroups with these properties. We have $M=\maT_1\times_{G_1} R$.
As $G_1$ has a finite number of connected components,
and as its identity component is $G_0\cong \ZZ^m\backslash\RR^m$ (cf.~\eqref{eq:G0Ses}), we see that $B'\definedas \maT_1/G_0$ is a finite covering of $B$. The minimality of $G_1$ implies that
$B'$ is connected, and $\maT_1\to B'$ is a principal bundle for the group~$G_0$.
Furthermore $M':=\maT_1\times_{G_0} R$ is a connected finite covering of~$M$.
Similarly as we defined~$S$ for~$M$, we can define~$S'$ for~$M'$, and  $\image S'$ is then a finite index subgroup of $\image S$.
The universal coverings of $B$ and $B'$ obviously coincide and are diffeomorphic to $\RR^{k+1}$, see \eqref{eq:cov.B}. We will write~$\witi B$ for this covering.
The group $\image S'$ is the deck transformation group of $\witi B\to B'$, and thus it acts cocompactly, freely and properly discontinuously on $\witi B\cong \RR^{k+1}$. Thus,~$B'$ is a closed manifold with fundamental group $\ZZ^r$, with $r=\rank \image  S' = \rank \image S$, with a contractible universal covering.
Hence,~$B'$ is a classifying space for $\ZZ^r$, and thus it is homotopy equivalent to the $r$-dimensional torus. It follows $r=k+1$.
(Note that in the case $k\geq 4$ the Borel conjecture -- solved for $\ZZ^{k+1}$ -- and in the case $k=2$ the solution of Thurston's geometrization conjecture imply that $B'$ is homeomorphic to a $(k+1)$-dimensional torus, the case $k\leq 1$ being trivial, and we conjecture that for any $k$ the manifold $B'$ is diffeomorphic to such a torus.)

The group $G_0$ has a finite connected covering $\wihat G_0$ (say, with $a$ sheets) for which the action lifts along $P\times (L\backslash\RR^{\ell-k}) \to R$ to a free action.
\footnote{As the following theorem should be a statement about spin manifolds, we add here: if the induced spin structure on $L\backslash \RR^m$ does not amit a parallel spinor, then we pass to a double cover where a parallel spinor exists.} We will also assume $L\backslash \RR^{\ell-k}$ to be equipped with the euclidean metric.
  Obviously $\wihat G_0\cong \ZZ^m\backslash \RR^m$ acts trivially on $P$ as the isometry group of $P$ is discrete.
As $B'$ is homotopy equivalent to a torus we can pass to the finite covering $B'':=(a\ZZ^{k+1})\backslash\witi B$, and then the structure group of $\maT_1$ reduces to $\wihat G_0$, \ie there is a  $\wihat G_0$-principal bundle $\maT''\to B''$ such that $\maT''\times_{\wihat G_0} G_0$ is isomorphic to the pullback of $\maT_1\to B'$.
Then
$$M'':=\maT''\times_{\wihat G_0}\bigl(P\times (L\backslash\RR^{\ell -k})\bigr)=P\times \maT''\times_{\wihat G_0}(L\backslash\RR^{\ell -k})$$
is a finite covering of $M'$ and thus of $M$. In particular $M''$ is a product of $P$ and a principal
bundle with fiber $L\backslash\RR^{\ell -k}$ over a manifold $B''$ homotopy equivalent to a torus of dimension $k+1$. Within the lift $Q_\sigma''$ of $Q_\sigma$ to $M''$, the product with $P$ is a Riemannian product. 

Further, we have an isomorphism of Lie groups
$$\leftsmallquotientspace{L}{\RR^{\ell-k}}=\bigl(\leftsmallquotientspace{\ZZ^m}{\RR^m}\bigr)\,\times \,\bigl(\leftsmallquotientspace{\ZZ^{\ell-k-m}}{\RR^{\ell-k-m}}\bigr)$$
such that $\wihat G_0=\ZZ^m\backslash \RR^m$ acts on the first factor by left multiplication. This leads to the product decomposition
w$$\maT''\times_{\wihat G_0}\bigl(L\backslash\RR^{\ell -k}\bigr)= \maT''\,\times \,\Bigl(\leftsmallquotientspace{\ZZ^{\ell-k-m}}{\RR^{\ell-k-m}}\Bigr),$$
however this is no longer Riemannian  product, even not in $Q_\sigma''$.

We conclude that, $\pi_1(M)$ contains the subgroup $\pi_1(M'')$ which is of finite index and which fits into a short exact sequence $0\to \ZZ^{\ell - k}\to\pi_1(M'')\to  \ZZ^{k+1}\to 0$.

We summarize what we have proved so far, writing $M_1$ instead of $M''$ and $B_1$ instead of $B''$, and $\maT_1:=\maT''\times_{\wihat G_0} (L\backslash\RR^{\ell -k})$.

\begin{theorem}\label{thm:WdecInv-light-nc.one}
  Assume that a closed connected $n$-dimensional manifold~$M$ carries a generalized initial data triple  $(g,K,\phi)$, and assume that the associated leaves are non-compact.
  Then there is a finite covering $M_1\to M$, some numbers $0\leq k\leq\ell<n$, a closed spin
  manifold $B_1$, a lattice $L$ in $\RR^{\ell-k}$, an $(L\backslash\RR^{\ell -k})$-principal bundle $\pi\colon\maT_1\to B_1$, a closed simply-connected  manifold $(P,h)$ with a parallel spinor, and a foliation $(\maG_\sigma)_{\sigma\in \RR}$
 of codimension $1$ on $B_1$ -- given as the images of level sets of a submersion $s_B:\witi B\to \RR$ --, a flat longitudinal metric on the foliation $(\maG_\sigma)_{\sigma\in \RR}$, and a flat Riemannian metric on the preimages $\pi^{-1}(\maG_\sigma)\subseteq\maT_1$  of any leaf $\maG_\sigma$ such that: 
  \begin{itemize}
  \item The fibers $\maT_1\to B_1$ with the induced metrics are all isometric to  $\bigl(L\backslash \RR^{\ell -k},\geucl\bigr)$ -- the map of $L\backslash \RR^{\ell -k}$ to any orbit is an isometry --, and the action of  $\bigl(L\backslash \RR^{\ell -k},\geucl\bigr)$ is isometric. The map  $\pi^{-1}(\maG_\sigma)\to \maG_\sigma$ is a Riemannian submersion for each $\sigma$.
    \item The dimension of $B_1$ coincides with the $\ZZ$-rank of $\image S$. We define $k:=\dim B_1-1$. And  $B_1$ is homotopy equivalent to a $(k+1)$-dimensional torus.
    \item All leaves $\maG_\sigma$ of $B_1$ are simply connected and thus isometric to euclidean~$\RR^k$. As a consequence $\witi B$ is diffeomorphic to $\RR^{k+1}$.
    \item $M_1$ is diffeomorphic to $\maT_1\times P$.
    \item Let $\witi\maT$ be the pullback of $\maT_1$ to the universal covering $\witi B$ of $B_1$.
      Then the map $s:\witi M= \witi\maT \times P\to \RR$ is the composition of the projection $\witi M=\witi\maT \times P\to \witi\maT\to \witi B$ and the map $s_B$. Thus leaves on $M_1$ are diffeomorphic to a product of  $\pi^{-1}(\maG_\sigma)$ and $P$.
    \item The metric on each leaf of $M_1$ is the Riemannian product of the given metric on  $\pi^{-1}(\maG_\sigma)$ and the metric $h$ on $P$.
    \item The map $s_B$ induces a map $S_1:\pi_1(B_1)\to \RR$, whose image acts as deck transformations for $\witi B\to B_1$,
 and thus we have $\pi_1(B_1)=\image(S_1)$, (obviously this action is only continuous if $\image(S_1)$ carries the  discrete topology; the action preserves the longitudinal metric on $B_1$).
    \item On $\witi B$ the group $\image(S_1)$ acts freely on the space of leaves.
  \end{itemize}
  In particular, the fundamental group $\pi_1(M)$ is virtually solvable of derived length at most~$2$. In more detail: Its finite index subgroup $\pi_1(M_1)$  fits into a short exact sequence
  $0\to \ZZ^{\ell - k}\to \pi_1(M_1)\to \ZZ^{k+1}\to 0$.
\end{theorem}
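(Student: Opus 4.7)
The plan is to package the constructions carried out in the paragraphs immediately preceding the theorem statement into a clean proof. The starting point is Proposition~\ref{prop:Qtilde.struc} applied in the non-compact-leaf case, which already supplies the integers $k \le \ell < n$, the simply connected compact Ricci-flat $(P,h)$ with parallel spinor, the lattice $L \subset \RR^{\ell-k}$, the compact factor $(R, \hat h)$ finitely covered by $(P,h) \times (L\backslash\RR^{\ell-k}, \geucl)$, and the smoothly varying isometries $\xi_\sigma : (R, \hat h) \times (\RR^k, \geucl) \to (Q_\sigma, g_\sigma)$ trivializing the leaves of $\maF$.

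First, I would realize $M$ as an associated bundle. The images $\xi_\sigma(R \times \{y\})$ form an $R$-manifold bundle $\FF_R$ over a closed base $B$, and taking $\maT$ to be the space of isometric embeddings $(R, \hat h) \hookrightarrow (M, g)$ whose image lies in some leaf $Q_\sigma$, the group $G = \Isom(R, \hat h)$ acts freely from the right and turns $\maT \to B$ into a principal $G$-bundle with $\maT \times_G R \cong M$. Using \eqref{eq:G0Ses}, I would then reduce the structure group to the minimal open subgroup $G_1$, passing to the finite cover $B' = \maT_1/G_0$ of $B$ and $M' = \maT_1 \times_{G_0} R$ of $M$. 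Finally I would pass to the finite cover $\wihat G_0$ of $G_0$ on which the action lifts freely along $P \times (L\backslash\RR^{\ell-k}) \to R$, together with the corresponding finite cover $B_1$ of $B'$. Setting $\maT_1 \definedas \maT'' \times_{\wihat G_0} (L\backslash\RR^{\ell-k})$ yields the principal $(L\backslash\RR^{\ell-k})$-bundle $\pi : \maT_1 \to B_1$ and the finite cover $M_1 = P \times \maT_1$ of $M$ required in the theorem.

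Next I would analyze $B_1$ topologically. The universal cover $\witi B$ carries a submersion $s_B$ induced from $s : \witi M \to \RR$, whose level sets define the codimension-one foliation $(\maG_\sigma)_\sigma$. The parametrization \eqref{eq:cov.B} gives $\witi B \cong \RR^{k+1}$, and the deck group $\image S_1$ of $\witi B \to B_1$ acts cocompactly, freely and properly discontinuously, so $B_1$ is a closed aspherical manifold with fundamental group $\ZZ^r$ for $r = \rank \image S_1 = \rank \image S$. Hence $B_1$ is a $K(\ZZ^r, 1)$ and thus homotopy equivalent to the $r$-torus, forcing $r = \dim B_1 = k+1$. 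The simple connectivity of each $\maG_\sigma$ (equivalent to the non-compactness of the leaves of $\maF$) and their isometry with flat $\RR^k$ follows once one checks that the stabilizer of $s_B^{-1}(\sigma)$ in $\image S_1$ is trivial, while the leaves of $\maF$ on $M_1$ decompose as Riemannian products $\pi^{-1}(\maG_\sigma) \times (P, h)$ through $\xi_\sigma$. The short exact sequence $0 \to \ZZ^{\ell-k} \to \pi_1(M_1) \to \ZZ^{k+1} \to 0$ then drops out of the homotopy long exact sequence of the fibration $L\backslash\RR^{\ell-k} \to \maT_1 \to B_1$ together with $\pi_1(P) = 1$, whence $\pi_1(M)$ is virtually solvable of derived length at most~$2$.

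The main obstacle I expect is tracking the Riemannian data through the successive reductions of structure group and finite covers: the fibers of $\pi$ must be isometric to $(L\backslash\RR^{\ell-k}, \geucl)$ with $\wihat G_0$ acting by isometries, the restriction of $\pi$ to $\pi^{-1}(\maG_\sigma)$ must be a Riemannian submersion onto a flat $\maG_\sigma$, and the leaves of $\maF$ in $M_1$ must split as genuine Riemannian products. These compatibilities follow in principle from the equivariance of $\xi_\sigma$ under the cristallographic action of $\rho^\sigma(\Gamma)$ and the orthogonality of the $V_1 \oplus V_2$ decomposition established in the proof of Proposition~\ref{prop:Qtilde.struc}, but keeping them consistent through four layers of covers is the only genuinely technical step of the argument.
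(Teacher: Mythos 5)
Your proposal is correct and follows essentially the same route as the paper, which itself presents Theorem~\ref{thm:WdecInv-light-nc.one} as a summary of the construction in Subsection~\ref{subsec.ll-gen-ini-d-t-noncpct}: Proposition~\ref{prop:Qtilde.struc}, the principal bundle $\maT\to B$ of isometric embeddings of $(R,\hat h)$, the successive structure-group reductions $G\supseteq G_1\supseteq G_0$ and the lift to $\wihat G_0$, the identification of $B_1$ as a $K(\ZZ^{k+1},1)$ via $\witi B\cong\RR^{k+1}$, and the resulting extension $0\to\ZZ^{\ell-k}\to\pi_1(M_1)\to\ZZ^{k+1}\to 0$. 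The technical compatibilities you flag are exactly the ones the paper resolves through the equivariance of $\xi_\sigma$ established in the proof of Proposition~\ref{prop:Qtilde.struc}.
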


In fact the number $\ell$ use here is the one
introduced in Proposition~\ref{prop:Qtilde.struc}.

Note that we only made statements about the Riemannian metrics $g_\sigma$ on the leaves $Q_\sigma$, its coverings and projections. The results may not be extended to the full metric $g$ on $M$, as the vector field $U$ and the function $u$ may not be  compatible with the product structure $\wihat T_1\times P$, examples may be obtained by techniques similar to the ones mentioned at the end of Remark~\ref{rem:further.methods}.
However the lifts of $U^\flat$ and of the functions $s$ and $S$ are compatible.\medskip

\begin{corollary}\label{conj:WdecInv-lightlike}
  Let $M$ be a closed connected manifold, with a lightlike generalized initial data triple $(g, K,\phi)$. Then $b_1\neq 0$ and  $\pi_1(M_1)$ is virtually solvable of derived length at most $2$.
\end{corollary}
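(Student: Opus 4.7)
The plan is to reduce both claims to the preceding structural theorems by case-splitting on whether the foliation $\maF$ associated to $(g,K,\phi)$ has compact or non-compact leaves, since the hard work has already been absorbed there.

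First, I would dispose of $b_1(M)\neq 0$ exactly as in the lightlike part of Corollary~\ref{cor:WdecInv-old}. The assumption that $V_\phi$ is lightlike together with $\phi$ nowhere vanishing yields $\|U\|^2 = u_\phi^2 > 0$, so the Riemannian Dirac current $U$ is a nowhere-vanishing section of $TM$. Lemma~\ref{Lem:VphiParallel} provides $\nabla U = -u_\phi W$ with $W$ a $g$-symmetric endomorphism, so $dU^\flat = 0$. Any primitive of a closed nowhere-vanishing $1$-form on a closed manifold would have to attain an interior critical point, contradicting nowhere-vanishingness; hence $[U^\flat]\in H^1_{\mathrm{dR}}(M)$ is non-zero and $b_1(M)\geq 1$.

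Next, I would establish virtual solvability by case splitting. If the leaves of $\maF$ are compact, Theorem~\ref{thm:WdecInv-new-cpct}\,\eqref{item-two} provides a finite cover of $M$ which is a lightlike initial data manifold, whose fundamental group is of the form $\ZZ^\ell \rtimes \ZZ$ by Definition~\ref{def.initial.data.manifold}. Such a semidirect product of abelian groups is solvable of derived length at most $2$ regardless of the $\ZZ$-action, since its commutator subgroup is contained in the abelian normal $\ZZ^\ell$. If the leaves are non-compact, Theorem~\ref{thm:WdecInv-light-nc.one} supplies a finite cover $M_1 \to M$ whose fundamental group fits into a short exact sequence
$$0 \longrightarrow \ZZ^{\ell-k} \longrightarrow \pi_1(M_1) \longrightarrow \ZZ^{k+1} \longrightarrow 0,$$
and the same argument applies: the commutator subgroup lies in the abelian kernel $\ZZ^{\ell-k}$. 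Either way, a finite index subgroup of $\pi_1(M)$ is solvable of derived length at most $2$, which is precisely the virtual solvability claim.

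Since the two structural theorems have already done the real work -- the holonomy splitting and path-of-Ricci-flat-metrics analysis in the compact case, and the principal bundle / Bieberbach argument in the non-compact case -- no genuine obstacle remains at this level. The remaining task is merely to combine the two cases and record the elementary group-theoretic fact that any extension of an abelian group by an abelian group has derived length at most $2$.
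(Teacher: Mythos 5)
Your proposal is correct and takes essentially the same route as the paper: the $b_1\neq 0$ statement comes from the closed, nowhere-vanishing $1$-form $U^\flat$ (equivalently, from the non-triviality of the homomorphism $S\colon\pi_1(M)\to\RR$), and the virtual solvability claim is obtained by splitting into the compact-leaf case (Theorem~\ref{thm:WdecInv-new-cpct}\,\eqref{item-two}, giving $\ZZ^\ell\rtimes\ZZ$) and the non-compact-leaf case (Theorem~\ref{thm:WdecInv-light-nc.one}, giving $0\to\ZZ^{\ell-k}\to\pi_1(M_1)\to\ZZ^{k+1}\to 0$), with the elementary observation that an abelian-by-abelian group has derived length at most $2$.
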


Let us compare the theorem to the examples given in Subsection~\ref{subsec.example-ll-gen-ini-d-t-gen}.

\begin{example}[Example~\ref{example.lightlike.idt.eins} continued]
  We use the notion of Example~\ref{example.lightlike.idt.eins}, and we assume without loss of generality $\|\phi_0\|=\|\phi\|\equiv 1$. Then $\pi_1(M)=\Gamma$ and $S(w_j)=s_j S(w_n)$, and $s$ is the projection to the $\RR$-factor of $\witi M=W=V\times \RR$ with $N=(0,1) \in V \times \RR$. We obtain $\ell=n-1$, and thus $P$ is a point. Then up to the factor $S(w_n)$, $\image S$ is the group generated by $s_1,\ldots,s_{n-1},s_n=1$, its rank being  $k+1$ by definition. By possibly changing the basis of $\Gamma$ we may achieve $w_1,\ldots,w_{\ell-k} \in V$, \ie  $s_1=\cdots=s_{\ell-k}=0$. Then $(R,\hat h)$ from Proposition~\ref{prop:Qtilde.struc} is the $(\ell-k)$-dimensional torus $\lgen w_1,\ldots,w_{\ell-k}\rgen\backslash \spann(w_1,\ldots,w_{\ell-k})$.
  Here $\lgen w_1,\ldots\rgen$ denotes the subgroup generated by $w_1,\ldots$, while  $\spann(w_1,\ldots)$ is the generated linear subspace of $V$. For the space $\witi B$ we may take the quotient space $W/\spann(w_1,\ldots,w_{\ell-k})$, and let $\Gamma_B$ be the image of $\lgen w_{\ell-k+1}, \ldots,w_n\rgen$ in $\witi B$. We may take $B_1=\Gamma_B\backslash\witi B$ and for the total space of $\maT_1$ we choose $M=M_1$. The leaves $\maG_\sigma$ are given by the image of $V\times \{\sigma\}$ in $B_1$. The map $s_B$ is obtained by factoring $s$ through the projection $W\to \witi B$. Obviously $S_1=S$.
\end{example}

\begin{example}[Example~\ref{example.heisenberg.comb} continued]\label{example.heisenberg.irrat}
  We use the notation of Example~\ref{example.heisenberg.comb}, and we consider the case $\alpha\notin\QQ$. We may take $M_1=M=\HH_3=L_3\backslash \maH_3$. Every $(Q^\alpha \times \{\sigma\},g_\sigma)$ is isometric to $\RR\times (\RR/L\ZZ)$, where $L$ is the length of one of the loops  $\gamma_{x,y}: [0,1]\to \maH_3$, $z\mapsto H_{x,y,z}$, with respect to the left-invariant metric. Obviously $L$ does not depend on $x$ and $y$. Thus the total space of $\maT_1$ is $\HH_3$, and we have $k=1$ and $\ell=2$, and $P$ is a point. For $B_1$ we may take $\Torus[2]$, we interpret  $\Torus[2]$ as the space of the circles $[\gamma_{x,y}]$, \ie it is the quotient of $\HH_3$ by the $S^1$-action, given by the central action of $H_{0,0,z}$, $z\in \RR$.
The map $\pi:\maT_1\to B_1$ is given by $[H_{x,y,z}]\to [\gamma_{x,y}]$.

Then $\maG_{\sigma}$ is a line of irrational slope $-\alpha$ in $\Torus[2]$, $\maG_\sigma=\left\{[\gamma_{x,\sigma-\alpha x}]\mid x\in \RR\right\}$. Furthermore $Q^\alpha \times \{\sigma\}$ is $\pi^{-1}(\maG_\sigma)$. We have discussed $\pi_1(\HH_3)=L_3=\ZZ^2\rtimes \ZZ$, which leads to a short exact sequence $0\to \ZZ^2\to L_3\to \ZZ\to 0$. This is \textbf{not} the short exact sequence given by the theorem; the theorem yields $0\to \ZZ\to L_3\to \ZZ^2\to 0$, this is a way to write $L_3$ as a central extension of $\ZZ^2$ by $\ZZ$.
\end{example}

\begin{remark}
At the end, we should mention that if we apply the above theorem to a closed $n$-dimensional manifold $M$, then not all numbers $\ell\leq n-1$ may arise. Some numbers $\ell$ are obstructued as the above conditions imply that a structured Ricci-flat metric exists on a closed simply-connected $(n-\ell-1)$-dimensional manifold. As such metrics do not exist in dimensions
$1,2,3,5,9$, the case $\ell\in \{n-2,n-3,n-4,n-6,n-10\}$ may not arise. However, for all other integers $\ell\in \{0,1,\ldots,n-1\}$ we have examples.
\end{remark}

\begin{remark}\label{rem.initial.data.triples.with.DEC}
  In these Subsections~\ref{subsec.ll-gen-ini-d-t-gen},  \ref{subsec.ll-gen-ini-d-t-cpct}, \ref{subsec.ll-gen-ini-d-t-compact-noncpct}, and~\ref{subsec.ll-gen-ini-d-t-noncpct} we never used the fact that $(g,K)$ satisfies the dominant energy condition (DEC). We expect further obstructions from this condition. It seems plausible to us, that all lightlike inital data triples satisfying DEC are finitely covered by the product of a simply-connected closed manifold with a parallel spinor and a torus.
\end{remark}

\subsection{Conclusions}

We will now apply the above results in order to see that the Dirac-Witten operator is often invertible when we only assume the dominant energy condition and not the strict dominant energy condition, as the existence of initial data triples is topologically obstructued.

\begin{examples}\label{examples.all.obstructed}
We provide some further examples of closed spin manifolds with an obstruction to initial data triples.
\begin{enumerate}
\item Let $M$ be a closed spin manifold whose fundamental group is not virtually solvable. Then Corollary~\ref{conj:WdecInv-lightlike} tells us that $M$ cannot carry a lightlike initial data triple. Moreover, as $\pi_1(M)$ is not virtually abelian, Example~\ref{example.obstr.structured} \eqref{example.obstruc.para.spin.i} and Corollary~\ref{cor:WdecInv-old} show that we also do not have a timelike initial data triple.
\item Let $M$ be a closed spin manifold which admits a metric of non-positive curvature $g$. Recall that Wolf's conjecture, proved by S.T. Yau \cite[Corollary~1]{yau:71}, says that if $\pi_1(M)$ is virtually solvable, then $g$ is flat. This is a converse to the well-known fact that if $g$ is flat, then $\pi_1(M)$ is a Bieberbach group and thus virtually abelian. So let us assume that $g$ is non-flat. Then $\pi_1(M)$ is not virtually solvable and thus $M$ cannot carry an initial data triple. 
\end{enumerate}
\end{examples} 

\begin{corollary} \label{cor:obstr.lightlike.id-triple}
  Suppose that $M$ is a closed connected spin manifold that has a
  topological obstruction to an initial data triple, for example if one of the following holds:
  \begin{itemize}
  \item $b_1(M)=0$ and there are obstructions to a Ricci-flat metric with a parallel spinor, see e.g.\ Examples~\ref{example.obstr.structured}  \eqref{example.obstruc.para.spin.iii}, \eqref{example.obstruc.para.spin.iv} and \eqref{example.obstruc.para.spin.v}.
  \item $M$ is one of the examples in Examples~\ref{examples.all.obstructed}.
  \end{itemize}
    Then for every $(g,K)$ satisfying the dominant energy condition the Dirac--Witten operator is invertible.
\end{corollary}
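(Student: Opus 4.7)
The plan is a proof by contradiction based on the timelike/lightlike dichotomy established in Section~\ref{sec.kernel.DWo}. Suppose $\phi \in \ker \olDirac$ with $\phi \not\equiv 0$. Since $(g,K) \in \Dec(M)$ and $M$ is closed and connected, Theorem~\ref{theorem.kernel-gives-initial-data} gives $\ol\nabla \phi = 0$. Because $\ol\nabla$ is a metric connection for the non-degenerate form $\llangle \cdot, \cdot \rrangle$, parallel transport acts as a fiberwise linear isomorphism, so $\phi$ is nowhere vanishing, and $(g,K,\phi)$ is an initial data triple in the sense of Definition~\ref{def.initial.data.triples} (and in particular also a generalized one). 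By Lemma~\ref{Lem:VphiParallel} applied on the connected manifold $M$, the Lorentzian Dirac current $V_\phi$ is either timelike everywhere or lightlike everywhere.

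Next I would dispatch the two cases. In the timelike case, Theorem~\ref{thm:WdecInv-old} yields a Ricci-flat metric on $M$ carrying a parallel spinor. In the lightlike case, Corollary~\ref{conj:WdecInv-lightlike} gives both $b_1(M) \neq 0$ and that $\pi_1(M)$ is virtually solvable of derived length at most~$2$ (virtual solvability passes up from a finite-index subgroup to the whole group). It then suffices to verify that each bulleted hypothesis excludes both alternatives. For the first bullet, $b_1(M)=0$ obstructs the lightlike case, and the assumed obstruction to a Ricci-flat metric with parallel spinor obstructs the timelike one. For the second bullet, the fundamental groups appearing in Examples~\ref{examples.all.obstructed} are not virtually solvable (by construction, using Yau's solution of Wolf's conjecture in the non-positive curvature example), which rules out the lightlike alternative; they also cannot be fundamental groups of closed Ricci-flat manifolds, since such groups are virtually abelian by the Cheeger--Gromoll splitting theorem, see Example~\ref{example.obstr.structured}~\eqref{example.obstruc.para.spin.i}. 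A contradiction is reached in every case, so $\ker \olDirac = 0$; since $\olDirac$ is self-adjoint Fredholm with discrete spectrum, it is then invertible.

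The proof is essentially a bookkeeping assembly of results from the preceding subsections, and there is no serious obstacle beyond checking that the two listed families of topological obstructions really do rule out both the timelike and the lightlike alternatives simultaneously. The truly difficult inputs, namely the structural analysis of timelike (Theorem~\ref{thm:WdecInv-old}) and lightlike (Corollary~\ref{conj:WdecInv-lightlike}) initial data triples, have already been established in Section~\ref{sec.kernel.DWo}.
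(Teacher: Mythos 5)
Your proof is correct and assembles the pieces in exactly the way the paper intends: the paper leaves this corollary without an explicit proof, and your reconstruction --- passing through Theorem~\ref{theorem.kernel-gives-initial-data} to produce an initial data triple, then Lemma~\ref{Lem:VphiParallel} for the timelike/lightlike dichotomy, then dispatching the two cases via Theorem~\ref{thm:WdecInv-old} and Corollary~\ref{conj:WdecInv-lightlike} --- is precisely the intended argument. (One small nit: parallel transport is a linear isomorphism for any connection, so the appeal to $\ol\nabla$ being metric for $\llangle\cdot,\cdot\rrangle$ is unnecessary for the nowhere-vanishing step, though the conclusion is of course right.)
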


\section{Homotopy groups of $\Decstr(M)$ and  $\Dec(M)$}\label{sec.homotopy.groups}
In this section we want to review and then slightly extend the results on the space $\Decstr(M)$ of initial data sets $(g, K)$ subject to the strict dominant energy condition that were obtained by the second author \cite{gloeckle:p2019}.
As stated in the introduction, the goal of this extension is to get rid of the strictness assumption, \ie to obtain topological information about $\Dec(M)$.
In particular, we will see that this works when $\pi_1(M)$ is not virtually solvable.

More precisely, in this section we will assume that $M$ is a closed manifold, $M\neq \emptyset$. For the definition of the space of initial data pairs, and strict and non-strict dominant energy condition we refer to Subsection~\ref{subsec.dominant.energy}. In particular, in Definition~\ref{def.dec.ids}, we introduced the spaces $\Decstr(M)\subseteq \Dec(M)\subseteq \Ini(M)$.
The main purpose of the aforementioned article is to construct and detect non-trivial elements in $\pi_k(\Decstr(M))$ for certain $k \in \NN$.  
We will then consider the maps $\pi_k(\Decstr(M)) \to \pi_k(\Dec(M))$ induced by inclusion.
Our analysis from Section~\ref{sec.kernel.DWo} will provide sufficient criteria for the non-trivial elements to survive.

\subsection{Initial data sets and positive scalar curvature}\label{subsec.ini.data.pairs}
The construction of candidates for non-trivial elements in $\pi_k(\Decstr(M))$ in \cite{gloeckle:p2019} relies on a link between $\Decstr(M)$ and $\Psc(M)$, the space of positive scalar curvature metrics, which is a subspace of the $C^\infty$-space $\Met(M)$ of all metrics on $M$.

This link comes from considering initial data sets $(g, K)$, where $K$ is a purely constant trace tensor, \ie $K = \tau g$ for some $\tau \in \RR$.
Such a pair satisfies the dominant energy condition in the strict sense $\rho > \|j\|$ if and only if $\scal^g > -n(n-1) \tau^2$.
Hence, if we define
\begin{align*}
	\tau \colon \Met(M) &\lto \RR \\
	g &\lmapsto \sqrt{\frac{1}{n(n-1)}\min\{0,-\min_{x \in M} \scal^g(x)\}} +C
\end{align*}
for some $C > 0$, then $\bigl(g, \pm\tau(g) g\bigr) \in \Decstr_{\pct}(M)$, where the subscript $\pct$ indicates that we are considering the subspace of $\Decstr(M)$, where $K$ is a purely constant trace tensor.
Note that $\tau$ is continuous and thus gives rise to a continuous map of pairs
\begin{align*}
	\phi \colon \Bigl(\Met(M) \times I,\; \bigl(\Psc(M) \times I\bigr) \cup \bigl(\Met \times \del I\bigr)\Bigr) &\lto \Bigl(\Ini_{\pct} (M),\; \Decstr_{\pct}(M)\Bigr) \\
	(g, t) &\lmapsto \bigl(g, t\tau(g)g\bigr),
\end{align*}
where $I = [-1,1]$.
Although $\phi$ depends on the choice of the constant $C > 0$, its homotopy class is easily seen to be independent of this choice. Moreover, we have the following:
\begin{theorem}
The map $\phi \colon \big(\Met(M) \times \del I\bigr) \cup \bigl( \Psc(M) \times I\bigr) \overset{\phi}{\longrightarrow} \Decstr_{\pct}(M)$ is a homotopy equivalence.
\end{theorem}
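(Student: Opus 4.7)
The approach is to split both $X \definedas (\Met(M) \times \partial I) \cup (\Psc(M) \times I)$ and $\maY \definedas \Decstr_{\pct}(M)$ into positive and negative halves meeting in the central slice $\Psc(M) \times \{0\}$, show that $\phi$ is a homotopy equivalence on each half and (trivially) the identity on the meeting slice, and then glue.

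First, I pass to the coordinates $\Ini_{\pct}(M) \cong \Met(M) \times \RR$ via $(g, \tau g) \leftrightarrow (g, \tau)$. Under this identification
\[
  \maY = \{(g, \tau) \in \Met(M) \times \RR : \min_{x \in M} \scal^g(x) + n(n-1)\tau^2 > 0\},
\]
and $\phi$ takes the form $(g,t) \mapsto (g, t\tau(g))$. Because $\tau(g) \geq C > 0$, the assignment $(g, t) \mapsto (g, t\tau(g))$ is a self-homeomorphism of $\Met(M) \times \RR$; hence it suffices to show that the inclusion $X \hookrightarrow \maY' \definedas \{(g, t) : \min_x \scal^g(x) + n(n-1) t^2 \tau(g)^2 > 0\}$ is a homotopy equivalence.

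Next I decompose $X = X_+ \cup X_-$ with $X_\pm \definedas (\Met(M) \times \{\pm 1\}) \cup (\Psc(M) \times [0, \pm 1])$, and $\maY' = \maY'_+ \cup \maY'_-$ with $\maY'_\pm \definedas \maY' \cap \{\pm t \geq 0\}$, so that $X_+ \cap X_- = \Psc(M) \times \{0\} = \maY'_+ \cap \maY'_-$ and $\phi$ restricts to the identity on this intersection. The crucial step is to show that each inclusion $X_\pm \hookrightarrow \maY'_\pm$ is a homotopy equivalence. Both spaces project to $\Met(M)$ and share the section $g \mapsto (g, \pm 1)$, and the straight-line homotopy $H_s(g, t) \definedas (g, (1-s)t \pm s)$ realises each space as a strong deformation retract of this section (hence of $\Met(M)$, which is contractible but that is not needed here). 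The key check is that $H_s$ stays inside $\maY'_\pm$: setting $A(g) \definedas \max\{0, -\min_x \scal^g(x)/(n(n-1))\}$, the definition $\tau(g) = \sqrt{A(g)} + C$ gives $\sqrt{A(g)}/\tau(g) < 1$, so the fibre constraint $|t| > \sqrt{A(g)}/\tau(g)$ for $g \notin \Psc(M)$ is preserved under convex combinations of $t$ with $\pm 1$. Since the inclusion intertwines both retractions and induces the identity on the base $\Met(M)$, the two-out-of-three property yields the claim.

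Finally I glue: applying the standard gluing lemma to the decompositions $(X; X_+, X_-)$ and $(\maY'; \maY'_+, \maY'_-)$, with the pieces and intersection related by homotopy equivalences under $\phi$, I conclude that $\phi \colon X \to \maY'$, and hence the original $\phi \colon X \to \maY$, is a homotopy equivalence. The main technical obstacle is the cofibrancy hypothesis of the gluing lemma at the inclusion $\Psc(M) \times \{0\} \hookrightarrow \maY'_\pm$, which is not a classical collared embedding because the fibre of $\maY'_\pm$ over $g \notin \Psc(M)$ does not extend down to $t = 0$. I handle this by replacing the closed decompositions with the open-cover thickenings $X_\pm^\epsilon \definedas X \cap \{\pm t > -\epsilon\}$ and $Z_\pm^\epsilon \definedas \maY' \cap \{\pm t > -\epsilon\}$ for small $\epsilon > 0$, noting that $\phi$ maps $X_\pm^\epsilon$ into $Z_\pm^\epsilon$ and that the straight-line homotopy argument above extends verbatim to the thickened pieces; the two-out-of-three verification for the thickened intersections then follows from the same fibrewise-contractibility analysis.
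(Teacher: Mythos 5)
Your reduction to showing that the inclusion $X \hookrightarrow \maY'$ is a homotopy equivalence is correct and matches the paper's setup, and the straight-line retractions of the unthickened closed pieces $X_\pm \hookrightarrow \maY'_\pm$ onto $\Met(M) \times \{\pm 1\}$ are sound. But the repair of the cofibrancy problem via the open thickenings $Z_\pm^\epsilon = \maY' \cap \{\pm t > -\epsilon\}$ has a genuine gap, and it is precisely the point where the crux of the statement lives. For a metric $g \notin \Psc(M)$, the fibre of $\maY'$ over $g$ is $\{t : |t| > a(g)\}$ with $a(g) = \sqrt{A(g)}/(\sqrt{A(g)}+C) \in (0,1)$; this has \emph{two} components. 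Since $a(g) \to 0$ as $g$ approaches $\partial\Psc(M)$ from outside, for every $\epsilon > 0$ there are points $(g,t) \in Z_+^\epsilon$ with $g \notin \Psc(M)$ and $t \in (-\epsilon, -a(g)) \subset (-\epsilon, 0)$. On such a point the straight-line homotopy $H_s(g,t) = (g,(1-s)t + s)$ passes through $t' = 0$, where the defining inequality of $\maY'$ reads $\min_x\scal^g > 0$ and fails; so $H_s$ leaves $Z_+^\epsilon$, and the claim that the argument ``extends verbatim'' is false. The same disconnected-fibre phenomenon breaks the thickened intersection step: the fibre of $Z_+^\epsilon \cap Z_-^\epsilon$ over such a $g$ is the nonempty disconnected set $(-\epsilon,-a(g))\cup(a(g),\epsilon)$, whereas the fibre of $X_+^\epsilon \cap X_-^\epsilon = \Psc(M)\times(-\epsilon,\epsilon)$ over $g$ is empty, so no ``fibrewise-contractibility analysis'' can identify the two.

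The paper avoids all of this by writing down an explicit homotopy inverse $(g,t) \mapsto \bigl(g, f(\min_x\scal^g(x), t)\bigr)$ with
$f(s,t) = t/\sqrt{t^2+s^2}$ for $s > 0$ and $f(s,t) = \sign(t)$ for $s \leq 0$, and using convex combination in the $t$-variable as the required homotopies. The essential feature is that this retraction is \emph{sign-preserving} in $t$: over $g \notin \Psc(M)$ it sends $t$ to $\sign(t)$, staying within the same component of the disconnected fibre. Your retraction, which always pushes $t$ toward $+1$ (resp.\ $-1$), cannot be made to respect the fibre components once you thicken past $t = 0$. If you want to salvage the gluing strategy, you would need a sign-adapted (rather than rectangular $\{\pm t > -\epsilon\}$) thickening or a homotopy that, like the paper's $f$, never crosses $t=0$ over a non-psc metric; as written, the argument does not close.
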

\begin{proof}
	First of all, we note that $\Met(M) \times \RR \to \Ini_{\pct}(M), (g, t) \mapsto \bigl(g, t\tau(g)g\bigr)$ is a homeomorphism (as we have assumed $M \neq \emptyset$).
	Under this homeomorphism $\Decstr_{\pct}(M)$ corresponds to the subspace
          $$U = \Bigl\{(g,t) \,\Bigm|\, \scal^g + n(n-1)\tau(g)^2 t^2 > 0 \Bigr\} \subseteq \Met(M) \times \RR\,$$
        and we have to show that the inclusion $\bigl(\Met(M) \times \del I\bigr) \cup \bigl(\Psc(M) \times I \bigr)\to U$ is a homotopy equivalence.
	
	This follows from the existence of a continuous function $f \colon \RR^2 \setminus (\RR_{\leq 0} \times \{0\}) \to I$ with $f(s,t) =\sign(t) \in \del I$ for all $s \leq 0, t \neq 0$:
	Given such a function, a homotopy inverse of the inclusion is given by
	\begin{align*}
		U &\lto \bigl(\Met(M) \times \del I\bigr) \cup \bigl(\Psc(M) \times I\bigr) \\
	 	(g, t) &\lmapsto \bigl(g, f(\min_{x \in M} \scal^g(x), t)\bigr).
	\end{align*}
	The sign condition for $f$ guarantees that convex combination (affecting only the $\RR$-variable) yields the required homotopies.
	It is elementary to check that
	\begin{align*}
	f \colon \RR^2 \setminus (\RR_{\leq 0} \times \{0\}) &\lto I \\
	(s, t) &\lmapsto \begin{cases}
		\frac{t}{\sqrt{t^2+s^2}} & s > 0 \\
		\sign(t) & s \leq 0
	\end{cases}
	\end{align*}	
	is continuous and thus a function with the properties mentioned above.
\end{proof}

As $\Met(M)$ is contractible, the canonical projection map
$$\bigl(\Met(M) \times \del I\bigr) \cup \bigl(\Psc(M) \times I\bigr) \to \Susp \Psc(M)$$
to the suspension of $\Psc(M)$ is a homotopy equivalence.
For instance, a homotopy inverse may be given by the explicit formula
	\begin{align*}
	\Susp \Psc(M) &\lto \bigl(\Met(M) \times \del I\bigr) \cup \bigl(\Psc(M) \times I\bigr) \\
	[g,t] &\lmapsto
	\begin{cases}
	\bigl((-2t-1) h + 2(1+t)g, -1 \bigr) & t \in [-1,-\frac12]\\
	(g, 2t) & t \in [-\frac12 , \frac12]\\
	\bigl((2t-1) h + 2(1-t)g, 1 \bigr) & t \in [\frac12, 1],
	\end{cases}
	\end{align*}
where $h \in \Met(M)$ is some chosen basepoint.
Note that we follow the convention that $\Susp \emptyset \cong \del I$, and then the homotopy inverse maps $t\in \del I$ to $(h,t)$.
Combining this discussion with the previous theorem,
we obtain:
\begin{corollary}
  The canonical homotopy equivalence $\bigl(\Met(M) \times \del I\bigr) \cup \bigl(\Psc(M) \times I\bigr) \to \Susp \Psc(M)$ and the map $\phi$ from above induce a homotopy equivalence
  $$\Phi \colon \Susp \Psc(M) \to \Decstr_{\pct}(M),$$
which is defined independently of our choices up to homotopy.
\end{corollary}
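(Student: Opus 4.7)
The plan is to construct $\Phi$ as a composition and then track the dependence on the auxiliary choices. Concretely, let $\psi \colon \Susp \Psc(M) \to \bigl(\Met(M) \times \del I\bigr) \cup \bigl(\Psc(M) \times I\bigr)$ be the explicit map displayed just before the corollary, which depends on the basepoint $h \in \Met(M)$. Because $\Met(M)$ is contractible, the canonical projection is a homotopy equivalence and $\psi$ is indeed a homotopy inverse of it. I then define
\[
  \Phi \definedas \phi \circ \psi \colon \Susp \Psc(M) \lto \Decstr_{\pct}(M).
\]
Since both $\psi$ and $\phi$ (by the previous theorem) are homotopy equivalences, $\Phi$ is automatically a homotopy equivalence. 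This is really the only content of the existence half; the map $\psi$ is written down so there is nothing to verify beyond continuity and the fact that composing with the projection to $\Susp \Psc(M)$ is homotopic to the identity, which follows from a straightforward linear homotopy inside the contractible factor $\Met(M)$.

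For the independence statement I have to check that different admissible choices yield homotopic maps. The construction involves three choices: the constant $C>0$ entering $\tau$, the basepoint $h \in \Met(M)$ entering $\psi$, and -- conceptually -- the particular homotopy inverse of the canonical projection. For $C$: two different constants $C_0, C_1$ give continuous functions $\tau_0, \tau_1$ with $\tau_i(g) > 0$ everywhere, and the straight line homotopy $\tau_s = (1-s)\tau_0 + s\tau_1$ is again positive and continuous, so the induced maps $\phi_0, \phi_1$ are homotopic through maps of pairs; composing with $\psi$ preserves this homotopy. For $h$: two basepoints $h_0, h_1$ can be joined by the path $s \mapsto (1-s)h_0 + sh_1$ in $\Met(M)$, and this yields an explicit homotopy between $\psi_{h_0}$ and $\psi_{h_1}$ inside $\bigl(\Met(M) \times \del I\bigr) \cup \bigl(\Psc(M) \times I\bigr)$. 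Finally, any two homotopy inverses of a homotopy equivalence are homotopic, which takes care of the third choice abstractly.

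The main obstacle I anticipate is really bookkeeping rather than any genuine difficulty: one must be careful that the homotopies one writes down actually land in the subspace $\bigl(\Met(M) \times \del I\bigr) \cup \bigl(\Psc(M) \times I\bigr)$ (and respect the $\del I$ part where required), so that composing with $\phi$ stays in $\Decstr_{\pct}(M)$. For the $h$-dependence of $\psi$ this is automatic on the $t \in [-\tfrac12,\tfrac12]$ piece (the formula ignores $h$ there) and on the outer pieces the interpolated metric only appears in the first coordinate, the second coordinate being $\pm 1 \in \del I$, so staying in the subspace is immediate. Once this is checked, the corollary follows with no further input.
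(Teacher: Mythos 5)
Your proposal is correct and matches the paper's implicit argument: the corollary is presented there without a formal proof, as an immediate combination of the preceding theorem (the map $\phi$ is a homotopy equivalence), the contractibility of $\Met(M)$ (making the projection to the suspension a homotopy equivalence), and the explicit formula for the inverse $\psi$. Your verification that linear interpolation in $C$ and in $h$ produces valid homotopies of maps of pairs is exactly the content of the paper's remark that the homotopy class of $\phi$ is ``easily seen to be independent of this choice''; it is worth noting, as you implicitly use, that $(1-s)\tau_{C_0}+s\tau_{C_1}=\tau_{(1-s)C_0+sC_1}$, so the linear homotopy stays within the admissible family of cutoffs.
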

This implies that $\Phi_\ast \colon \pi_k\bigl(\Susp \Psc(M), [h,1]\bigr) \to \pi_k\Bigl(\Decstr_{\pct}(M), \bigl(h, \tau(h)h\bigr)\Bigr)$ is an isomorphism for all $k$. Thus, we may consider the composition
\begin{align*}
	\pi_{k-1}\bigl(\Psc(M), h\bigr) &\lto \pi_k\bigl(\Susp \Psc(M), [h,1]\bigr) \cong \pi_k\Bigl(\Decstr_{\pct}(M), \bigl(h, \tau(h)h\bigr)\Bigr) \\
	&\lto \pi_k\Bigl(\Decstr(M), \bigl(h, \tau(h)h\bigr)\Bigr),
\end{align*}
	where the first map is the suspension homomorphism and the last one is induced by the inclusion $\Decstr_{\pct}(M)\hookrightarrow \Decstr(M)$.
	We will show that this maps certain non-trivial elements in the homotopy groups of $\Psc(M)$ to non-trivial elements in the homotopy groups of $\Decstr(M)$.

	We are particularly interested in the case $k = 0$, which is a bit special.
	Notice that $\Susp \Psc(M)$ is path-connected unless $\Psc(M) = \emptyset$, in which case $\Susp \Psc(M) = \del I$ consists of precisely two points.

     Note that it follows from what we proved so far: if two metrics $g$ and $g'$ and real numbers $\tau,\tau'>0$ satisfy  $(g, \tau g),(g', \tau' g')\in \Decstr_{\pct}(M)$, then $(g, \tau g)$ and   $(g', \tau' g')$ are in the same path component of $\Decstr_{\pct}(M)$. And the same holds for $\tau,\tau'<0$.

	\begin{definition}
	  The path-component $C_+ \in \pi_0\bigl(\Decstr(M)\bigr)$ that contains all elements $\bigl\{(g, \tau g)\in \Decstr_{\pct}(M)\mid \tau \in \RR_{>0}\bigr\}$ is called \emph{component of expanding initial data}. 
	  Similarly, the path-component $C_- \in \pi_0\bigl(\Decstr(M)\bigr)$ that contains all $\bigl\{(g, \tau g)\in \Decstr_{\pct}(M)\mid \tau \in \RR_{<0}\bigr\}$ is the \emph{component of contracting initial data}.
	\end{definition}
	The discussion above shows that $C_+ = C_-$ if $\Psc(M) \neq \emptyset$, as $\Decstr_{\pct}(M)$ is path-connected in this case.
	In the case $\Psc(M) = \emptyset$, it is not clear whether $C_+ \neq C_-$ although they are induced by distinct path-components of $\Decstr_{\pct}(M)$.

	However, we will show that this is indeed the case, when the obstruction against positive scalar curvature metrics on $M$ is given by the so-called $\alpha$-index.

\subsection{The $\alpha$-index and index difference for psc metrics}
\label{sec:adiff}
Index theoretic methods play an important role for obstructing the existence of Riemannian metrics of positive scalar curvature as well as for detecting non-trivial homotopy groups of $\Psc(M)$.
The first task is for example carried out by the $\KO$-valued $\alpha$-index.
The index difference, a family version of the $\alpha$-index, was first used by Hitchin \cite{hitchin:74} to serve the second purpose.
Here, we briefly recall their construction, building on the framework laid out by Ebert in \cite{ebert:13}.

In the following, we additionally assume that the closed $n$-dimensional manifold~$M$ is spin.
For a fixed metric $g$, we work with its $\Cl_n$-linear spinor bundle $\Sigma_{\Cl} M = P_{\Spin(n)} M \times_{\Spin} \Cl_n$.
This bundle carries a right $\Cl_n$-action and we denote by $c \colon \RR^n \to \End(\Sigma_{\Cl} M)$ the corresponding Clifford multiplication.
The scalar product on $\Cl_n$ described in the course of Lemma \ref{lem:ScalarProducts} induces a bundle metric $\< -, - \>$ on $\Sigma_{\Cl} M$, with respect to which the right Clifford multiplication is skew-adjoint.
Moreover, the even-odd grading on $\Cl_n$, induced from $\RR^n \to \RR^n,\, v \mapsto -v$, gives rise to a $\Ztwo$-grading $\iota \in \End(\Sigma_{\Cl} M)$ on this spinor bundle.
The grading operator $\iota$ is self-adjoint and anti-commutes with the right Clifford multiplication.
Taking all this together, we obtain that the space of $L^2$-sections $H = L^2(\Sigma_{\Cl} M)$ is a ($\Ztwo$-graded) \emph{$\Cl_n$-Hilbert space} in the sense of \cite[Def.\ 2.1]{ebert:13}.
Moreover, it is \emph{ample}, meaning that it contains each irreducible ($\Ztwo$-graded) $\Cl_n$-module infinitely often.

The bundle $\Sigma_{\Cl} M$ carries a connection $\nabla$ induced from the Levi-Civita connection of $g$.
With respect to this, the right Clifford multiplication $c$, the bundle metric $\langle -, - \rangle$ and the grading operator $\iota$ are parallel.
Thus the associated Dirac operator~$\Dirac$, or rather its bounded transform $\frac{\Dirac}{\sqrt{1+ \Dirac^2}}$, is a \emph{$\Cl_n$-Fredholm operator} on $H$ (cf.\ \cite[Def.\ 2.6]{ebert:13}. Here and in the following we understand by $\Cl_{n,k}$-Fredholm operator that it also fulfills the extra technical condition for $n-k \equiv 3 \mod 4$ appearing in the definition of $\Fred^{n,k}(H)$. That $\Dirac$ also satisfies this condition needs a bit of extra thought.
We want to consider a suitable index of $\frac{\Dirac}{\sqrt{1+ \Dirac^2}}$ that takes into account both the grading and the $\Cl_n$-linear structure.

The \emph{$\KO$-index map} goes back to Atiyah and Singer \cite{atiyah.singer:69}.
For a compact space $X$, this is a natural map
\begin{align*}
	\ind \colon [X,\; \Fred^{n,k}(H)] \to \KO^{k-n}(X),
\end{align*}
where $\Fred^{n,k}(H)$ denotes the space of $\Cl_{n,k}$-Fredholm operators (with norm-topology) on the ample $\Cl_{n,k}$-Hilbert space $H$.
The main result of \cite{atiyah.singer:69} implies that the index map is a bijection.
Let $G^{n,k}(H)$ be the subspace of invertible operators in $\Fred^{n,k}(H)$.
Using that $G^{n,k}(H)$ is contractible (\cite[Lem.\ 2.8]{ebert:13}), the index map and the statement above can be extended to a relative setting:
\begin{theorem} \label{Thm:IndexMap}
For any compact CW-complex $(X,Y)$, there is a natural bijection
\begin{align*}
	\ind \colon [(X, Y),\; (\Fred^{n,k}(H), G^{n,k}(H)] \to \KO^{k-n}(X, Y).
\end{align*}
\end{theorem}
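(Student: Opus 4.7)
The plan is to reduce the relative statement to the absolute version (the bijection $\ind \colon [X, \Fred^{n,k}(H)] \to \KO^{k-n}(X)$ of Atiyah--Singer) by exploiting the contractibility of $G^{n,k}(H)$ cited from \cite[Lem.~2.8]{ebert:13}.

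First I would fix an invertible operator $F_0 \in G^{n,k}(H)$ together with a contraction of $G^{n,k}(H)$ onto $\{F_0\}$. Since $Y \hookrightarrow X$ is a cofibration (being a CW-pair) and $G^{n,k}(H)$ is contractible, the homotopy extension property allows us to deform any representative $f \colon (X, Y) \to (\Fred^{n,k}(H), G^{n,k}(H))$ to one with $f(Y) \equiv F_0$, and similarly for homotopies between such representatives. This produces a natural bijection
\[
  [(X, Y),\; (\Fred^{n,k}(H), G^{n,k}(H))] \;\cong\; [X/Y,\; \Fred^{n,k}(H)]_{F_0},
\]
where the right-hand side denotes based homotopy classes with basepoint of the cofibre $X/Y$ mapped to $F_0$.

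Second, I would invoke the absolute index bijection $\ind \colon [Z, \Fred^{n,k}(H)] \to \KO^{k-n}(Z)$ applied to the compact space $Z = X/Y$. Under this identification, the constant map at $F_0$ corresponds to $0 \in \KO^{k-n}(X/Y)$, because the contractibility of $G^{n,k}(H)$ forces $\ind(F_0) = 0$. Consequently the based homotopy classes form the fibre over $0$ of the restriction-to-basepoint map $\KO^{k-n}(X/Y) \to \KO^{k-n}(\mathrm{pt})$, giving
\[
  [X/Y,\; \Fred^{n,k}(H)]_{F_0} \;\cong\; \widetilde{\KO}^{k-n}(X/Y).
\]
Third, for a CW-pair the canonical isomorphism $\widetilde{\KO}^{k-n}(X/Y) \cong \KO^{k-n}(X, Y)$ finishes the identification; composing the three bijections defines the desired relative index map $\ind$ and naturality in $(X, Y)$ is immediate from the naturality of each step.

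The main obstacle will be justifying that this composite bijection is really a relative index map, i.e.\ that it fits into commuting squares with the long exact sequences in $\KO$-theory and the Puppe-type sequence coming from the cofibration $Y \hookrightarrow X$. Once that compatibility is set up, a five-lemma argument confirms that $\ind$ is a natural bijection for every compact CW-pair. A minor subtlety worth being careful about is the technical condition in the definition of $\Fred^{n,k}(H)$ appearing when $n - k \equiv 3 \pmod 4$, since it must be preserved under the deformations in the first step; this is built into the fact that $G^{n,k}(H)$ is defined inside $\Fred^{n,k}(H)$ and is contractible within that space, so no issue actually arises.
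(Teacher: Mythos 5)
Your proposal is correct and follows essentially the same route the paper indicates: reduce the relative statement to Atiyah--Singer's absolute bijection $[X,\Fred^{n,k}(H)]\to\KO^{k-n}(X)$ using the contractibility of $G^{n,k}(H)$ (from \cite[Lem.~2.8]{ebert:13}) together with the cofibration property of a CW-pair, exactly as the paper sketches in the two sentences preceding the theorem. The paper does not spell out the details, so your passage through $[X/Y,\Fred^{n,k}(H)]_{F_0}\cong\widetilde{\KO}^{k-n}(X/Y)\cong\KO^{k-n}(X,Y)$ supplies precisely the argument it has in mind, including the correct remark that the extra condition for $n-k\equiv 3\pmod 4$ is preserved because $G^{n,k}(H)$ sits inside $\Fred^{n,k}(H)$.
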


For a single operator $F \in \Fred^{n,k}(H)$, its \emph{$\KO$-valued index} is given by $\ind([F]) \in \KO^{k-n}(\pt)$, where, by abuse of notation, $F$ is identified with a map $\pt \to \Fred^{n,k}(H)$ in the obvious way.
\begin{definition}
	The \emph{$\alpha$-index} is defined to be
	$\alpha(M) = \ind\left(\left[\frac{\Dirac}{\sqrt{1+ \Dirac^2}}\right]\right) \in \KO^{-n}(\pt)$.
\end{definition}
\begin{remarks}
\begin{enumerate}
\item
Notice that this definition implicitly claims that $\alpha(M)$ is independent of the metric.
Indeed, as any two metrics can be joined by a continuous path, we may join the corresponding Dirac operators by a continuous path in $\Fred^{n}(H)$ and hence they define the same homotopy class of a map $\pt \to \Fred^{n}(H)$.
However, this argument is not yet completely rigorous, as also the bundle $\Sigma_{\Cl} M$ and hence $H = L^2(\Sigma_{\Cl} M)$ depends on the metric.
There are various ways to overcome this problem.
A very concrete one is presented in \cite{gloeckle:p2019}, where the method  of generalized cylinders due to Bär, Gauduchon and Moroianu \cite{baer.gauduchon.moroianu:05} is used to obtain explicit isomorphisms between the Hilbert spaces.

\item
If $M$ carries a metric of positive scalar curvature, then for this metric the Dirac operator is invertible.
It follows that $\frac{\Dirac}{\sqrt{1+ \Dirac^2}} \in G^n(H)$ and $\alpha(M) = 0$.
In particular, non-zero $\alpha$-index is an obstruction to the existence of psc metrics.

\item
Recall that
\begin{align*}
	\KO^{-n}(\pt) \cong
	\begin{cases}
		\ZZ & n \equiv 0,4 \mod 8 \\	
		\Ztwo & n \equiv 1,2 \mod 8 \\
		0 & \text{else}.
	\end{cases}
\end{align*} 
Hence, $\alpha(M)$ can only be non-zero if the dimension of $M$ is $0,1,2$ or $4$ mod~$8$.
\end{enumerate}
\end{remarks}

Now suppose that there exists a positive scalar curvature metric $h$ on $M$.
The purpose of the $\alpha$-index difference is to detect non-triviality of $\pi_k(\Psc(M), h)$ for some $k$..
First note that, as $\Met(M)$ is contractible, the boundary map $\pi_{k+1}(\Met(M), \Psc(M), h) \to \pi_k(\Psc(M), h)$ is an isomorphism. 
Assigning to every metric $g$ the Fredholm operator $F_g = \frac{\Dirac_g}{\sqrt{1+ \Dirac_g^2}}$ yields a continuous map of pairs $(\Met(M), \Psc(M)) \to (\Fred^n(H), G^n(H))$.
In particular, this provides a homomorphism $\pi_{k+1}(\Met(M), \Psc(M), h) \to \pi_{k+1}(\Fred^n(H), G^n(H), F_h)$.
Finally, forgetting the basepoint and applying the index map gives the composition
\begin{align*}
	\pi_{k+1}(\Fred^n(H), G^n(H), F_h)
	&\lto [(D^{k+1}, S^k),\; (\Fred^n(H), G^n(H))] \\
	 &\lto \KO^{-n}(D^{k+1}, S^k) \cong \KO^{-n-k-1}(\pt),
\end{align*} 
which is even bijective due to Theorem \ref{Thm:IndexMap} and contractibility of $G^n(H)$.

\begin{definition}
The \emph{$\alpha$-index difference} (or just \emph{index difference}) is defined to be the composition
\begin{align*}
	\adiff \colon \pi_k(\Psc(M), h) &\cong \pi_{k+1}\bigl(\Met(M), \Psc(M), h\bigr) \\
	&\lto \pi_{k+1}\bigl(\Fred^n(H), G^n(H), F_h\bigr) \cong \KO^{-n-k-1}(\pt)
\end{align*}
of the maps described above.
\end{definition}

\begin{remarks} \label{Rem:adiff}
\begin{enumerate}
\item
The map $\bigl(\Met(M), \Psc(M)\bigr) \to \bigl(\Fred^n(H), G^n(H)\bigr)$ required in the definition cannot be naively defined by $g \mapsto F_g$.
The problem is again that the operators $F_g$ act on different $\Cl_n$-Hilbert spaces.
To make sense of the assignment $g \mapsto F_g$, these Hilbert spaces have to be identified in a suitable way.
We refer once more to \cite{gloeckle:p2019} for more details.

\item
For $k \geq 1$, the index map $\pi_{k+1}\bigl(\Fred^n(H), G^n(H), F\bigr) \to \KO^{-n-k-1}(\pt)$ and thus also the $\alpha$-index difference are homomorphisms.

In fact, it is easy to derive from the definition of the index map that given $D \colon (X, Y) \to \bigl(\Fred^n(H), G^n(H)\bigr)$ and $D^\prime \colon (X, Y) \to \bigl(\Fred^n(H^\prime), G^n(H^\prime)\bigr)$, then $\ind([D \oplus D^\prime]) = \ind([D]) + \ind([D^\prime])$, where $D \oplus D^\prime$ denotes the block diagonal operator family on $H \oplus H^\prime$.
Furthermore, for $[D_1],\,[D_2] \in \pi_{k+1}\bigl(\Fred^n(H), G^n(H), F\bigr)$ and $[D_1^\prime],\,[D_2^\prime] \in \pi_{k+1}\bigl(\Fred^n(H^\prime), G^n(H^\prime), F^\prime\bigr)$ the identity
\begin{align*}
	 \bigl([D_1]*[D_2]\bigr) \oplus \bigl([D_1^\prime]*[D_2^\prime]\bigr) = \bigl[D_1 \oplus D_1^\prime\bigr] * \bigl[D_2 \oplus D_2^\prime\bigr]
\end{align*} 
holds.
Together, this implies that the binary operation on $\KO^{-n-k-1}(\pt)$ induced by the $\pi_{k+1}$-composition $*$ and the ordinary addition of $\KO^{-n-k-1}(\pt)$ satisfy the requirements of the Eckmann-Hilton argument and are thus equal.
\end{enumerate}
\end{remarks}

We conclude the section by stating some results connected with these invariants. 
The first one asserts that in the simply connected case, the $\alpha$-index is a complete obstruction to positive scalar curvature.
It is due to Stolz, building on work of Gromov and Lawson \cite{gromov.lawson:80}.
\begin{theorem}[\cite{stolz:92}]
If $M$ is simply connected and of dimension $n \geq 5$, then $\Psc(M) = \emptyset$ if and only if~$M$ is spin with $\alpha(M) \neq 0$.
\end{theorem}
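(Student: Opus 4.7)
The statement splits into two directions of very different difficulty, and I would handle them separately.

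The obstruction direction (``$\Psc(M)\neq\emptyset$ implies $\alpha(M)=0$ when $M$ is spin'') is already recorded in the excerpt: the Schrödinger–Lichnerowicz identity shows that a psc metric makes $\Dirac$ invertible, so $F_g\in G^n(H)$ and $\alpha(M)=\ind[F_g]=0$. Nothing further is needed here, except noting that if $M$ is not spin then the statement ``$M$ is spin with $\alpha(M)\neq 0$'' is vacuously false, so this direction reduces to showing nonemptiness of $\Psc(M)$ in the two remaining cases: (a) $M$ non-spin and simply connected with $n\geq 5$; (b) $M$ spin, simply connected, $n\geq 5$, and $\alpha(M)=0$.

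For the existence direction I would follow the classical Gromov–Lawson/Stolz strategy. The first tool is the \emph{Gromov–Lawson surgery theorem}: if $N$ carries a psc metric and $N'$ is obtained from $N$ by a surgery of codimension $\geq 3$, then $N'$ also carries a psc metric. A handle decomposition argument then shows that the existence of a psc metric is an invariant of the suitable bordism class: for oriented manifolds of dimension $\geq 5$ that are simply connected, it descends to an invariant on $\Omega^{\SO}_n/(\text{surgery equivalence})$, and similarly for spin bordism $\Omega^{\Spin}_n$ in the spin case. The plan is then to exhibit explicit psc representatives of enough bordism classes to cover everything in the kernel of~$\alpha$.

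In case (a) I would invoke the analysis of the oriented bordism ring $\Omega^{\SO}_*$ (Wall): a simply connected non-spin manifold of dimension $n\geq 5$ is oriented-bordant to a total space of a $\CC P^{2k}$-bundle over a closed manifold, and by picking fiberwise Fubini–Study metrics scaled up one obtains a psc metric on the total space; combined with surgery in codimension $\geq 3$ (available thanks to $\pi_1=0$ and $n\geq 5$), this produces the required psc metric on $M$. In case (b) I would follow Stolz's key construction: $\Omega^{\Spin}_*$ modulo the ideal of manifolds carrying psc metrics is shown to be isomorphic to the image of $\alpha\colon\Omega^{\Spin}_*\to\KO^{-*}(\pt)$, via the representation of $\ker\alpha$ by total spaces of $\HH P^2$-bundles with structure group $\mathrm{PSp}(3)$; each such total space admits a psc metric by shrinking the fibers, and the Gromov–Lawson theorem together with the Atiyah–Hirzebruch argument for $\KO$-theory produces psc representatives of every class in $\ker\alpha$. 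The simple connectivity assumption is then used to upgrade ``psc in the same spin bordism class as $M$'' to ``$M$ itself admits psc'' through codimension-$\geq 3$ surgeries.

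The main obstacle, and the genuinely deep input, is Stolz's theorem that every class in $\ker(\alpha\colon\Omega^{\Spin}_n\to\KO^{-n}(\pt))$ is represented by a total space of an $\HH P^2$-bundle. Everything else is either formal (Gromov–Lawson surgery, bordism invariance) or a standard explicit construction (Fubini–Study or symmetric metrics on fibers). I would therefore spend most of the proof stating the $\HH P^2$-bundle theorem cleanly, deducing that $\ker\alpha$ lies in the image of the map $\Omega^{\Spin}_{n-8}(B\mathrm{PSp}(3))\to\Omega^{\Spin}_n$ given by fiberwise $\HH P^2$, and then feeding this into the surgery machinery to conclude $\Psc(M)\neq\emptyset$.
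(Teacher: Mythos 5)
The paper does not prove this statement; it is quoted verbatim as Stolz's theorem, with the obstruction direction already discussed in the remarks following the definition of $\alpha(M)$. Your outline reproduces the standard Gromov--Lawson--Stolz proof correctly: Lichnerowicz for the obstruction, and for existence the surgery theorem plus bordism invariance, with Gromov--Lawson's treatment of the simply connected non-spin case and Stolz's $\HH P^2$-bundle transfer theorem identifying $\ker\bigl(\alpha\colon\Omega^{\Spin}_n\to\KO^{-n}(\pt)\bigr)$ with the image of $\Omega^{\Spin}_{n-8}(B\mathrm{PSp}(3))$ for the spin case. Two points deserve slightly more care than your sketch gives them. First, the descent from ``$M$ is bordant to a psc manifold'' to ``$M$ admits psc'' is not just codimension-$\geq 3$ surgery on the bordism: one must first perform surgery on the interior of the bordism $W$ to make the inclusion $M\hookrightarrow W$ $2$-connected, which uses $\pi_1(M)=0$, $n\geq 5$, and the correct normal $1$-type ($B\Spin$ versus $B\SO$ according to whether $w_2(M)$ vanishes); only then is $M$ obtained from the psc end by surgeries of codimension $\geq 3$. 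Second, in the non-spin case the classical input is Gromov--Lawson's Theorem C (every class in $\Omega^{\SO}_n$, $n\geq 5$, is represented by a psc manifold, via explicit generators of the oriented bordism ring); the $\CC P^{2k}$-bundle formulation you give is a later refinement and not needed here. Neither point is a gap in substance, only in precision.
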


The second and third results state that the $\alpha$-index difference detects that the homotopy groups of $\Psc(M)$ are quite rich in general.
It should be noted that although the final results are similar, the authors of \cite{crowley.steimle.schick:18} use very different constructions compared to the authors of \cite{botvinnik.ebert.randal-williams:14}] for establishing the non-triviality of the $\alpha$-index difference.
\begin{theorem}[\cite{crowley.steimle.schick:18}]
If $M$ is spin, of dimension $n \geq 6$ and $h \in \Psc(M) \neq \emptyset$, then $\adiff \colon \pi_k(\Psc(M), h) \to \KO^{-n-k-1}(\pt) \cong \Ztwo$ is split surjective for all $k \geq 0$ with $k +n+ 1 \equiv 1, 2 \mod 8$.
\end{theorem}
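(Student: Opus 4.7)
The plan is to produce, for each $k \geq 0$ with $k + n + 1 \equiv 1, 2 \mod 8$, a based continuous map $S^k \to \Psc(M)$ whose $\alpha$-index difference hits the generator of $\KO^{-n-k-1}(\pt) \cong \Ztwo$, and to arrange these candidates into a section of $\adiff$. The starting data is a closed spin manifold $N$ of dimension $n + k + 1$ with $\alpha(N)$ generating $\KO^{-n-k-1}(\pt)$; such $N$ are available in the relevant dimensions, e.g.\ as products of Hitchin's exotic spheres in dimensions $8j+1$ or $8j+2$ with appropriate Bott manifolds.

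The next step is to convert $N$ into a family. Using spin surgery in codimension $\geq 3$, which is available since $n \geq 6$ and which preserves the $\alpha$-invariant, I would replace $N$ by a spin-bordant manifold $N'$ admitting a decomposition $N' = W \cup_{M \times S^k} (M \times D^{k+1})$, where $W$ is a spin cobordism carrying a psc metric that restricts to a Riemannian product $h + g_{S^k}$ on a collar of the boundary. The essential technical input is a parametric version of the Gromov--Lawson--Chernysh--Walsh psc surgery theorem that allows the surgeries to be carried out continuously in families while preserving the prescribed piece $M \times D^{k+1}$. Restricting the collar metric on $W$ to each boundary slice $M \times \{z\}$ for $z \in S^k$ then produces a based continuous family $\{g_z\}_{z \in S^k}$, hence a candidate element of $\pi_k(\Psc(M), h)$.

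The index computation proceeds via the cut-and-paste/additivity properties of the $\KO$-valued index from Remark \ref{Rem:adiff}(2) together with the bijectivity from Theorem \ref{Thm:IndexMap}. Unwinding the definition, $\adiff([\{g_z\}])$ is represented by the family on $(D^{k+1}, S^k)$ that interpolates between invertible Dirac operators on the boundary and the Dirac operator on $M \times D^{k+1}$; gluing this family to the (fixed) Dirac operator on $(W, g_W)$ recomposes the Dirac operator on the closed manifold $N'$, whose $\KO$-valued index is $\alpha(N') = \alpha(N)$ by spin-bordism invariance of $\alpha$. Since the Dirac operator on the product $(M \times D^{k+1}, h + g_{S^k})$ is invertible (because $h$ is psc), the contribution to be glued from that side is trivial, and we deduce $\adiff([\{g_z\}]) = \alpha(N)$, the generator. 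For splitting, one checks that the construction $N \mapsto [\{g_z\}]$ is additive under disjoint union and descends to a well-defined homomorphism from the bordism quotient $\KO^{-n-k-1}(\pt)$, providing the required section; for $k = 0$ a section exists for purely set-theoretic reasons.

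The hardest step is making the parametric Gromov--Lawson surgery construction rigorous: the resulting psc metrics must depend continuously on the family parameter in the $C^\infty$-topology, and the surgery data must be controllable so as to leave the designated product piece $M \times D^{k+1}$ undisturbed. A secondary difficulty is the cut-and-paste formula for the $\KO$-valued family index, which has to be matched to the definition encoded in Theorem \ref{Thm:IndexMap} via a clutching argument comparing the family over $(D^{k+1}, S^k)$ with the invertible data coming from $(W, g_W)$. Both ingredients are by now standard in the psc literature, but verifying compatibility at the level of the index difference requires careful bookkeeping.
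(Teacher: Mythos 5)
This theorem is a citation from Crowley--Schick--Steimle \cite{crowley.steimle.schick:18}; the present paper gives no proof, so the comparison is with their published argument, and your route is genuinely different from theirs. The CSS proof rests on the Gromoll filtration of $\Theta_{n+k+1}\cong\pi_0\Diff(D^{n+k},\partial)$: through Toda bracket computations in stable homotopy theory they exhibit homotopy spheres $\Sigma^{n+k+1}$ with $\alpha(\Sigma)\neq 0$ that lift along Gromoll's map to elements of $\pi_k\Diff(D^n,\partial)$. Such a lift is a based $k$-sphere of diffeomorphisms of a disk; acting by these on $h\in\Psc(M)$ inside an embedded disk gives the class in $\pi_k(\Psc(M),h)$, and a Hitchin-style index computation identifies its $\adiff$-image with $\alpha(\Sigma)$. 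The entire difficulty in CSS is the stable-homotopy computation establishing the Gromoll-filtration statement; there is no parametric surgery, and the mere existence of an $(n+k+1)$-manifold with nontrivial $\alpha$-invariant (which is all your starting input provides via Bott-manifold products) is far from sufficient.

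Independently of the mismatch of method, your sketch has a gap at its central step. You require the psc metric on $W$ to restrict to the Riemannian product $h+g_{S^k}$ on a collar of $\partial W=M\times S^k$, and then propose to obtain a nonconstant family $\{g_z\}_{z\in S^k}$ by restriction to the slices $M\times\{z\}$. But a product metric gives $g_z=h$ for every $z$, so that family is constant and its index difference is zero; and if the product form is dropped, positivity of the scalar curvature of the metric on $W$ no longer forces the slice metrics $g_z$ to lie in $\Psc(M)$, so no element of $\pi_k(\Psc(M),h)$ is produced at all. Moreover, the parametric Gromov--Lawson--Chernysh--Walsh step you flag as the main technical input, and the claimed well-definedness and additivity of $N\mapsto[\{g_z\}]$ needed for the splitting, are each substantial assertions requiring proofs that are nowhere in the sketch.
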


\begin{theorem}[\cite{botvinnik.ebert.randal-williams:14}]
If $M$ is spin, of dimension $n \geq 6$ and $h \in \Psc(M) \neq \emptyset$, then $\adiff \colon \pi_k(\Psc(M), h) \to \KO^{-n-k-1}(\pt) \cong \Ztwo$ and $\adiff \otimes \mathrm{id}_\QQ \colon \pi_k(\Psc(M), h) \otimes \QQ \to \KO^{-n-k-1}(\pt) \otimes \QQ \cong \QQ$
are surjective for all $k \geq 0$ with $n + k + 1 \equiv 1, 2 \mod 8$ or $n + k + 1 \equiv 0, 4 \mod 8$, respectively.
\end{theorem}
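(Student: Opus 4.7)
The plan is to follow the strategy of Botvinnik, Ebert, and Randal-Williams: realize enough classes in $\pi_k(\Psc(M),h)$ by a parametrized surgery construction, and detect them by identifying $\adiff$ with a composite whose surjectivity is already classical.

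First, I would use surgery stability: by the theorems of Chernysh and Walsh, the homotopy type of $\Psc(M)$ is invariant under spin surgeries of codimension at least $3$ on $M$ (at the cost of adjusting the base point $h$), and this invariance holds in families in a parametrized form. This lets me reduce to a convenient representative of the spin bordism class of $M$ in each case, and, more importantly, lets me apply the same surgery theorem to families of manifolds. Since the statement of the theorem only depends on $M$ through its dimension and spin structure, we lose nothing by this reduction.

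Second, I would construct a secondary-index style ``action map''
\begin{equation*}
\Psi \colon \Omega^{\infty+n+1}\,\mathrm{MTSpin}(n+1) \lto \Psc(M),
\end{equation*}
well-defined up to homotopy after fixing $h$, by the following mechanism. To each spin manifold bundle $W\to B$ whose fibers are bordisms starting at $M$, the parametrized Gromov--Lawson--Chernysh--Walsh construction assigns a fiberwise psc metric that restricts to $h$ on the incoming boundary; composing bordisms and group-completing yields a map out of the classifying space of the relevant bordism category, which by the Galatius--Madsen--Tillmann--Weiss theorem is the infinite loop space of $\mathrm{MTSpin}(n+1)$. Restricting to the outgoing boundary collar gives $\Psi$.

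Third, I would identify the composition $\adiff\circ \Psi_*$ on $\pi_k$ with the homotopy-group map induced by the Atiyah--Bott--Shapiro $\KO$-orientation
\begin{equation*}
\mathrm{MTSpin}(n+1)\lto \Sigma^{n+1}\KO,
\end{equation*}
by invoking additivity and excision of the family index together with compatibility of the Dirac operator under the gluing procedure used to define $\Psi$. The theorem then reduces to the classical fact that the induced map on homotopy groups is surjective rationally in degrees $n+k+1\equiv 0,4 \bmod 8$ (detected by the $\hat A$-genus of generators such as $\mathrm{HP}^{2m}$ and Bott manifolds after desuspending) and surjects onto the $\Ztwo$-summands in degrees $n+k+1\equiv 1,2 \bmod 8$ (detected by the $\alpha$-invariants of exotic spheres via Hitchin's argument). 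Split surjectivity in the torsion case is obtained by exhibiting an explicit right inverse built from a fixed exotic sphere family.

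The hard part will be step two. The construction of $\Psi$ demands a fully parametrized Gromov--Lawson surgery theorem that preserves the psc condition uniformly across all parameters, together with a subtle group-completion argument identifying the target of the moduli-space construction with the correct infinite loop space; both ingredients are genuinely delicate and rely on the full Galatius--Randal-Williams moduli-space machinery. Once $\Psi$ is in place, the identification $\adiff\circ \Psi_* = \mathrm{ABS}_*$ is essentially a naturality computation for the parametrized index, and the final surjectivity is then classical.
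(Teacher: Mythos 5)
This theorem is not proved in the paper at all: it is quoted as a known result from \cite{botvinnik.ebert.randal-williams:14} and used as a black box, so the only meaningful comparison is with the proof in that reference. What you have written is, in substance, an outline of that cited proof rather than an independent argument: Chernysh--Walsh surgery invariance of $\Psc(M)$, a parametrized Gromov--Lawson construction giving an action of a spin cobordism category on spaces of psc metrics, the Galatius--Madsen--Tillmann--Weiss/Galatius--Randal-Williams identification of its classifying space with the infinite loop space of a Madsen--Tillmann spectrum, the identification of $\adiff$ composed with the resulting map with the (looped) Atiyah--Bott--Shapiro transformation via additivity and naturality of the family index, and finally the classical computation of the image of $\alpha$ on spin bordism --- this is exactly the route of Botvinnik, Ebert and Randal-Williams. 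As a roadmap it is accurate, but as a proof it is empty precisely where the cited paper does its work, as you acknowledge: the fiberwise psc surgery in families, the cobordism-category/group-completion step, and the index-theoretic identification $\adiff\circ\Psi_*=\mathrm{ABS}_*$ are each substantial theorems that you only name. Two smaller remarks: the correct formulation of the intermediate map (for $\dim M=n$, in the indexing of the cited paper) is $\Omega^{\infty+1}\mathrm{MTSpin}(n)\to\Psc(M)$ with composite the corresponding loop map of $\mathrm{MTSpin}(n)\to\Sigma^{-n}\KO$, not $\Omega^{\infty+n+1}\mathrm{MTSpin}(n+1)\to\Psc(M)$ with target $\Sigma^{n+1}\KO$; and the split surjectivity you claim in the $\Ztwo$-cases is not part of this statement --- in the present paper that refinement is the separate Crowley--Schick--Steimle theorem \cite{crowley.steimle.schick:18}, obtained by a quite different method (Gromoll filtration and Toda brackets), and it is not needed here.
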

As the group structure of $\pi_0(\Psc(M))$ is unclear, the above statement should be understood in the case $k=0$ and $n+1\equiv 0 \mod 4$ in the sense that at least one element of $\pi_0(\Psc(M))$ is mapped to a non-trivial element in $\KO^{-n-k-1}(\pt)$.

\subsection{The index difference for initial data sets strictly satisfying DEC}\label{subsec.index.diff.idp}
We now explain how the construction from the previous section can be adapted to obtain an index difference for initial data sets strictly satisfying the dominant energy condition.
A comparison theorem between these index differences will then allow us to get interesting statements about the topology of $\Decstr(M)$.

For this purpose, we consider the $\Cl_{n,1}$-linear hypersurface spinor bundle on~$M$, \ie $\ol\Sigma_{\Cl} M = P_{\Spin(n)} M \times_\Spin \Cl_{n,1}$, associated to the metric $g$ on~$M$.
As the name already indicates, this bundle carries a right $\Cl_{n,1}$-action.
This Clifford multiplication $c \colon \RR^{n,1} \to \End(\ol\Sigma_{\Cl}M)$ is compatible with the (positive definite) scalar product $\<-, -\>$ induced by the scalar product on $\Cl_{n,1}$ (cf.\ discussion before Lemma \ref{lem:ScalarProducts}) in the sense that multiplication by the Riemannian basis vectors $E_1, \ldots E_n$ is skew-adjoint whereas multiplication by $E_0$ is self-adjoint.
Again, there is an even-odd grading operator $\iota \in \End(\ol\Sigma_{\Cl}M)$, which is self-adjoint and anti-commutes with $c$.
Thus the space of $L^2$-sections $\ol H = L^2(\ol\Sigma_{\Cl} M)$ gets the structure of an ample ($\Ztwo$-graded) $\Cl_{n,1}$-Hilbert space.

This can be further improved:
As in $\Cl_{n,1}$ the left multiplication with $E_0$ commutes with the left multiplication by $\Spin(n) \subset \Cl_{n,1}$, there is an induced operation on $\ol\Sigma_{\Cl}M$.
We will refer to this as left multiplication by $e_0$, as in the case where $M$ is a space-like hypersurface in a time-oriented Lorentzian manifold $N$, it corresponds to left multiplication with the unit normal $e_0$ on $\Sigma_{\Cl}N_{|M} \cong \ol\Sigma_{\Cl}M$, cf.\ Section \ref{sec:HypSpinBun}.
It is elementary to check that $c(e_{n+1})(\Psi) = e_0 \cdot \iota(\Psi)$, for $\Psi \in \ol\Sigma_{\Cl}M$, may be used to define an extension $c \colon \RR^{n+1,1} \to \End(\ol\Sigma_{\Cl}M)$ of the (right) Clifford multiplication, which is still compatible with $\<-, -\>$ and $\iota$.
Hence $\ol H$ has the structure of an ample ($\Ztwo$-graded) $\Cl_{n+1,1}$-Hilbert space.

The Levi-Civita connection of $g$ induces a connection $\nabla$ on $\ol\Sigma_{\Cl}M$, with respect to which the $\RR^{n+1,1}$-Clifford multiplication $c$, the scalar product $\<-, -\>$ and the grading operator $\iota$ are parallel.
We get an associated Dirac operator $\Dirac$, whose bounded transform defines a $\Cl_{n+1,1}$-Fredholm operator on $\ol H$.
However, we are more interested into the Dirac-Witten operator defined by $\ol\Dirac \Psi = \Dirac \Psi - \frac12 \tr(K) e_0 \cdot \Psi$, for $\Psi \in \Gamma(\ol\Sigma_{\Cl}M)$, which not only depends on the metric $g$, but also on the second component $K$ of an initial data set $(g, K)$.
The Dirac-Witten operator is only $\Cl_{n,1}$-linear, not $\Cl_{n+1,1}$-linear as left multiplication with $e_0$ does not commute with $c(e_{n+1})$ by definition.
Yet, it is still odd and formally self-adjoint, and hence the bounded transform $\frac{\ol\Dirac}{\sqrt{1+\ol\Dirac^2}}$ is a $\Cl_{n,1}$-Fredholm operator.
Moreover, the Schrödinger-Lichnerowicz type formula \eqref{eq:SchrL} shows that $\ol\Dirac$ is invertible if $(g, K) \in \Decstr(M)$.

Similarly to the situation in the previous subsection, it is possible to manufacture a continuous map of pairs $(\Ini(M), \Decstr(M)) \to (\Fred^{n,1}(\ol H), G^{n,1}(\ol H))$ that roughly spoken associates an initial data set $(g, K)$ the respective operator $\ol F_{g,K} = \frac{\ol\Dirac_{g, K}}{\sqrt{1+\ol\Dirac_{g, K}^2}}$.
Hence, using the induced map $\pi_{k+1}\bigl(\Ini(M), \Decstr(M), (h,L)\bigr) \to \pi_{k+1}\bigl(\Fred^{n,1}(\ol H), G^{n,1}(\ol H), \ol F_{h, L}\bigr)$, we may define the following.

\begin{definition}
	The \emph{$\ol\alpha$-index difference} is defined to be the composition
	\begin{align*}
		\oladiff \colon &\pi_k\bigl(\Decstr(M), (h, L)\bigr) \cong \pi_{k+1}\bigl(\Ini(M), \Decstr(M), (h,L)\bigr) \\
			&\lto \pi_{k+1}\bigl(\Fred^{n,1}(\ol H), G^{n,1}(\ol H), \ol F_{h, L}\bigr) \cong \KO^{-n-k}(\pt).
	\end{align*}
\end{definition}

The comments of Remark \ref{Rem:adiff} on the $\alpha$-index difference likewise apply for the $\ol\alpha$-index difference.
Note that we did not define an “$\ol\alpha$-index”.
The reason is that on any compact manifold $M \neq \emptyset$, there exists an initial data set that strictly satisfies the dominant energy condition, \eg $(g, \lambda g)$ for large $\lambda$, and hence the index of $\ol\Dirac$ is always zero.

\begin{theorem} \label{thm:CompAdiff}
For $h \in \Psc(M)$ and $k \geq 0$, the following diagram commutes:
\begin{equation*}
	\begin{tikzcd}
		\pi_k\bigl(\Psc(M), h\bigr) \rar{\mathrm{Susp}} \ar[dr, "\adiff"'] & \pi_{k+1}\bigl(\Susp\Psc(M), [h,1]\bigr) \rar{\Phi_*} & \pi_{k+1}\bigl(\Decstr(M), (h, \tau(h)h\bigr)) \ar[dl, "\oladiff"] \\
		& \KO^{-n-k-1}(\pt) &
	\end{tikzcd}
\end{equation*}
Moreover, if the basepoint of $\Decstr(M)$ is chosen to lie in $C_+ \in \pi_0(\Decstr(M))$, then $\oladiff(C_-) = \alpha(M) \in \KO^{-n}(\pt)$.
\end{theorem}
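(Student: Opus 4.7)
The plan is to observe that the composition $\oladiff\circ\Phi_*\circ\mathrm{Susp}$ explicitly realizes the classical \emph{Clifford suspension isomorphism} relating the $\KO$-index of Dirac operators on $\Cl_n$-linear spinors to the $\KO$-index of Dirac--Witten operators on $\Cl_{n,1}$-linear hypersurface spinors. For an initial data set of purely constant-trace form $(g,\tau g)$ the Dirac--Witten operator reduces to
\[
 \ol\Dirac_{g,\tau g} \;=\; \Dirac_g - \tfrac{n\tau}{2}\,e_0\cdot,
\]
and since left multiplication by $e_0$ anti-commutes with Clifford multiplication by spatial vectors one has $(\Dirac_g - s\,e_0)^2 = \Dirac_g^2 + s^2$, so $\Dirac_g - s\,e_0$ is invertible whenever $s\neq 0$ and $\ker\Dirac_g=0$, and for any $g$ provided $|s|$ is sufficiently large. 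This is precisely the Clifford perturbation that realizes the natural isomorphism $\pi_k\bigl(\Fred^n(H)\bigr)\cong\pi_{k+1}\bigl(\Fred^{n,1}(\ol H)\bigr)$, both groups being identified with $\KO^{-n-k}(\pt)$ via Theorem~\ref{Thm:IndexMap}; it sends the class of $F$ to the class of the family $s\mapsto F - s\,e_0$ with invertible endpoints and is compatible with the index map by naturality.

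To prove commutativity of the diagram, represent a class $[f]\in\pi_k(\Psc(M),h)$ by $f\colon(D^k,\partial D^k)\to(\Psc(M),h)$. The class $\Phi_*\mathrm{Susp}[f]\in\pi_{k+1}\bigl(\Decstr(M),(h,\tau(h)h)\bigr)$ is represented by the $(k{+}1)$-parameter family $(x,t)\mapsto\Phi\bigl([f(x),t]\bigr)$, $(x,t)\in D^k\times[-1,1]$. On the middle interval $t\in[-\tfrac12,\tfrac12]$ the explicit homotopy inverse used in the construction of $\Phi$ gives $\bigl(f(x),\,2t\tau(f(x))\,f(x)\bigr)$, whose Dirac--Witten operator is $\Dirac_{f(x)} - n\,t\,\tau(f(x))\,e_0$, a Clifford perturbation of $\Dirac_{f(x)}$. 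On the two outer intervals $t\in[-1,-\tfrac12]\cup[\tfrac12,1]$ the family lies entirely in $\Decstr_{\pct}(M)$, so $K$ is a constant multiple of $g$, the momentum density $j$ vanishes, and the Schrödinger--Lichnerowicz identity~\eqref{eq:SchrL} together with $\rho>0$ forces $\ol\Dirac$ to be invertible throughout; these portions take values in $G^{n,1}(\ol H)$ and may be contracted without affecting the relative index class. Consequently $\oladiff\circ\Phi_*\circ\mathrm{Susp}[f]$ equals the $\KO$-index of the Clifford-suspended family of bounded-transform Dirac operators $\{F_{f(x)} - s\,e_0\}$, which by naturality of the suspension isomorphism agrees with $\adiff[f]\in\KO^{-n-k-1}(\pt)$.

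The second assertion is the $k=0$ case of the same argument. Moving inside $C_+$ if necessary, we may take the basepoint to be $(h,\lambda h)$ with $\lambda>0$ large; then $(h,-\lambda h)$ represents $C_-$, and the straight-line path in $\Ini(M)$ between them yields the Dirac--Witten family
\[
  \ol\Dirac_t \;=\; \Dirac_h - \tfrac{n(1-2t)\lambda}{2}\,e_0,\qquad t\in[0,1],
\]
with invertible endpoints. This is exactly the Clifford suspension of $\Dirac_h$, so under the isomorphism $\pi_1\bigl(\Fred^{n,1}(\ol H),G^{n,1}(\ol H)\bigr)\cong\pi_0\bigl(\Fred^n(H)\bigr)\cong \KO^{-n}(\pt)$ its relative index equals $\ind(F_h)=\alpha(M)$.

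The principal obstacle is the precise formulation of the Clifford suspension isomorphism so that it is genuinely compatible with the implicit identifications between spinor Hilbert spaces for varying metrics used in the definitions of $\adiff$ and $\oladiff$ (via the generalized cylinders of~\cite{baer.gauduchon.moroianu:05} as employed in~\cite{gloeckle:p2019}); once this is in place, the commutativity follows from the explicit identification of the two Fredholm families sketched above.
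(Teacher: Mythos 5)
Your proposal is correct and follows essentially the same route as the paper's own (sketched) proof: both identify the Dirac--Witten operator of a constant-trace pair $(g,t\tau(g)g)$ with the Clifford/Bott perturbation $\Dirac_g - s\,e_0\cdot$ of the Dirac operator and then invoke the invariance of the $\KO$-index under the resulting suspension isomorphism, deferring the Hilbert-space identifications to the generalized-cylinder construction. The only cosmetic difference is that the paper splits your ``Clifford suspension isomorphism'' into two steps --- the doubling $\Cl_n$ on $H$ versus $\Cl_{n+1,1}$ on $\ol H$, followed by the Bott map down to $\Cl_{n,1}$ --- whereas you package them as a single identification $\pi_k(\Fred^n(H))\cong\pi_{k+1}(\Fred^{n,1}(\ol H))$.
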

We only sketch the proof. For details we refer to \cite{gloeckle:p2019}.
\begin{proof}[Sketch of proof]
The proof essentially boils down to the following:
Suppose $(g_x)_{x \in D^{k+1}}$ is a continuous family of metrics with $g_x \in \Psc(M)$ for all $x \in S^k = \del D^{k+1}$.
Denote by $(\Dirac_x)_{x}$ the associated family of $\Cl_n$-linear Dirac operators.
Moreover, let $(\olDirac_{x,t})_{x,t}$ be the family of $\Cl_{n+1}$-linear Dirac-Witten operators associated to the family  $((g_x, t\tau(g_x)g_x))_{(x,t) \in D^{k+1} \times I}$ of initial data sets (with $(g_x, t\tau(g_x)g_x) \in \Decstr(M)$ for all $(x, t) \in \del(D^{k+1} \times I)$).
It is to show that the family indices of $(\Dirac_x)_x$ and $(\olDirac_{x,t})_{x,t}$ coincide.
For simplicity, we will not explicitely employ the clumsy bounded transforms here and argue only up to sign.

The $\Cl_n$-linear Dirac operators are defined on the space $H = L^2(\Sigma_{\Cl} M)$, but we may also consider the Dirac operators on $\ol H= L^2(\ol\Sigma_{\Cl} M)$, and -- as was remarked above -- these are $\Cl_{n+1,1}$-linear.
It is a fundamental property of the $\KO$-index that it is invariant under this “doubling procedure”, \ie $\ind((\Dirac_x)_x) \in \KO(\pt)$ is the same whether we consider the operators as $\Cl_n$-linear on $H$ or as $\Cl_{n+1,1}$-linear on $\ol H$.
Now another invariance of the $\KO$-index comes into play:
The Bott map produces from the family $(\Dirac_x)_{x \in D^{k+1}}$ of $\Cl_{n+1,1}$-linear operators a family $(\Dirac_x - t c(e_{n+1}) \iota)_{(x, t) \in D^{k+1} \times I}$ of $\Cl_{n,1}$-linear operators with the same family index (at least up to sign).
But, by definition, the Dirac-Witten operator for $(g_x, t\tau(g_x)g_x)$ is $\olDirac_{x,t} = \Dirac_x - t\frac{n\tau(g_x)}{2}e_0 \cdot = \Dirac_x - t\frac{n\tau(g_x)}{2} c(e_{n+1}) \iota$, and the claim follows by a simple rescaling in the second summand.
\end{proof}

Using the theorems cited at the end of Section \ref{sec:adiff}, we immediately get the following conclusions.
\begin{corollary}
If $M$ is spin with $\alpha(M) \neq 0$, then $C_+$ and $C_-$ are different path-components of $\Decstr(M)$.
In particular, when $M$ is simply connected of dimension $n \geq 5$ (and not necessarily spin), we have $C_+ = C_-$ if and only if $M$ carries a metric of positive scalar curvature.
\end{corollary}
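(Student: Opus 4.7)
The first assertion is essentially an immediate corollary of Theorem~\ref{thm:CompAdiff}. Choose the basepoint of $\Decstr(M)$ to lie in $C_+$. Then the component $C_+$ itself represents the zero element in $\pi_0(\Decstr(M), \cdot)$, and hence $\oladiff(C_+) = 0$ in $\KO^{-n}(\pt)$. On the other hand, the second statement of Theorem~\ref{thm:CompAdiff} gives $\oladiff(C_-) = \alpha(M)$. Since by hypothesis $\alpha(M) \neq 0$, the classes $C_+$ and $C_-$ cannot coincide in $\pi_0(\Decstr(M))$, \ie they lie in different path-components.

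For the second assertion, assume that $M$ is simply connected of dimension $n \geq 5$. If $M$ carries a metric of positive scalar curvature, \ie if $\Psc(M) \neq \emptyset$, then the discussion preceding the definition of $C_+$ and $C_-$ shows that $\Decstr_{\pct}(M)$ is path-connected. In particular, any representative of $C_+$ can be joined to any representative of $C_-$ by a path inside $\Decstr(M)$, so $C_+ = C_-$. Conversely, suppose $M$ does not admit a positive scalar curvature metric. Then Stolz' theorem \cite{stolz:92} recalled above implies that $M$ is spin with $\alpha(M) \neq 0$. The first assertion of the present corollary then yields $C_+ \neq C_-$.

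The only step in which one needs a bit of care is verifying that $\oladiff(C_+) = 0$ for the chosen basepoint; this is immediate from the very definition of $\oladiff$ on $\pi_0$, since the constant loop at the basepoint is homotopic to the trivial family and the index map sends trivial families to zero. All the real work has been absorbed into Theorem~\ref{thm:CompAdiff} (whose proof compares $\adiff$ and $\oladiff$ via a Bott-periodicity type argument, tracing how the Dirac operator on the $\Cl_n$-linear bundle relates, after doubling and twisting by $-t c(e_{n+1})\iota$, to the Dirac--Witten operator on the $\Cl_{n,1}$-linear bundle) and into Stolz' theorem.
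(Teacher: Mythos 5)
Your proof is correct and takes essentially the same route the paper intends: the paper states this corollary without a proof, treating it as immediate from Theorem~\ref{thm:CompAdiff} (for the obstruction $\oladiff(C_-)=\alpha(M)$ versus $\oladiff(C_+)=0$) and Stolz's theorem (for the converse in the simply-connected case), which is exactly what you spell out.
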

\begin{corollary}
If $M$ is spin, of dimension $n \geq 6$ and $h \in \Psc(M) \neq \emptyset$, then $\oladiff \colon \pi_k\bigl(\Decstr(M), (h, \tau(h)h)\bigr) \to \KO^{-n-k}(\pt)$ is non-trivial for all $k \geq 1$ for which the target is non-trivial. Moreover, when the target is $\Ztwo$, it is split surjective.
\end{corollary}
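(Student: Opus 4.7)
The plan is to deduce the corollary directly from Theorem~\ref{thm:CompAdiff} combined with the surjectivity results of Botvinnik--Ebert--Randal-Williams and Crowley--Steimle--Schick recalled at the end of Section~\ref{sec:adiff}. The key observation is that, reading the commutative triangle of Theorem~\ref{thm:CompAdiff} with the index $k$ replaced by $k-1$ (so that the assumption $k \geq 1$ just says $k-1 \geq 0$), one obtains
\begin{equation*}
\oladiff \circ \Phi_* \circ \mathrm{Susp} \;=\; \adiff \colon \pi_{k-1}\bigl(\Psc(M), h\bigr) \lto \KO^{-n-k}(\pt).
\end{equation*}
Hence any surjectivity, non-triviality, or splitting statement for $\adiff$ on the left transports immediately to the corresponding property for $\oladiff$ on $\pi_k\bigl(\Decstr(M),(h,\tau(h)h)\bigr)$ on the right.

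Next I would distinguish cases according to $n+k\bmod 8$, using that $\KO^{-n-k}(\pt)\neq 0$ precisely when $n+k\equiv 0,1,2,4\bmod 8$. In the first subcase $n+k\equiv 1,2\bmod 8$ the target is $\Ztwo$, and the cited theorem of Crowley--Steimle--Schick applied with parameter $k-1$ (so that $(k-1)+n+1\equiv 1,2\bmod 8$) furnishes a splitting $s\colon \Ztwo\to \pi_{k-1}(\Psc(M),h)$ of $\adiff$; then $\Phi_*\circ \mathrm{Susp}\circ s$ is the required splitting of $\oladiff$, proving the \emph{moreover} clause. In the second subcase $n+k\equiv 0,4\bmod 8$ the target is $\ZZ$, and the cited theorem of Botvinnik--Ebert--Randal-Williams provides an element of $\pi_{k-1}(\Psc(M),h)$ whose $\adiff$-image is non-zero; its image under $\Phi_*\circ \mathrm{Susp}$ is then a class in $\pi_k\bigl(\Decstr(M),(h,\tau(h)h)\bigr)$ on which $\oladiff$ is non-zero, as required.

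There is essentially no serious obstacle once Theorem~\ref{thm:CompAdiff} is in place, since that theorem has already performed the non-trivial comparison between the Riemannian and the Lorentzian index differences, and the cited surjectivity results cover every relevant congruence class mod $8$. The only mild subtlety is the borderline case $k=1$ with $n+1\equiv 0\bmod 4$, where $\pi_0(\Psc(M))$ is only a pointed set and not a priori a group; but the Botvinnik--Ebert--Randal-Williams statement is already formulated in the pointed-set sense there (some non-basepoint component hits a non-zero element of $\KO$), and that is exactly what is required for non-triviality of $\oladiff$ on $\pi_1$, since the target in this case is non-trivial and the image of the chosen class is non-zero in~$\KO^{-n-1}(\pt)$.
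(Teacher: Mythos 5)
Your proof is correct and is exactly the argument the paper leaves implicit when it says the corollary follows ``immediately'' from Theorem~\ref{thm:CompAdiff} and the cited theorems: shift the index in Theorem~\ref{thm:CompAdiff} from $k$ to $k-1$, use the commutative triangle to transport (split) surjectivity from $\adiff$ to $\oladiff$, and invoke Crowley--Schick--Steimle in the $\Ztwo$ case and Botvinnik--Ebert--Randal-Williams in the $\ZZ$ case. Your handling of the borderline case $k=1$, $n+1\equiv 0\bmod 4$ via the pointed-set formulation is also the intended reading.
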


We want to extend this to the initial data sets that satisfy DEC but not necessarily in the strict sense.
\begin{corollary} \label{cor:Wdec}Assume addtionally, that $M$ has a topological obstruction to the existence of an initial data triple as discussed in Corollary~\ref{cor:obstr.lightlike.id-triple}.
  Then the two corollaries above also hold for $\Dec(M)$ instead of $\Decstr(M)$:
\begin{itemize}
	\item
	If $M$ is spin with $\alpha(M) \neq 0$, then the path-components $\tilde{C}_\pm \in \pi_0(\Dec(M))$ induced by $C_\pm$ are different.
In particular, when $M$ is simply connected of dimension $n \geq 5$ (and not necessarily spin), we have $\tilde{C}_+ = \tilde{C}_-$ if and only if~$M$ carries a metric of positive scalar curvature.
	\item
	  If $M$ is spin, of dimension $n \geq 6$ and $h \in \Psc(M) \neq \emptyset$, then the map $\oladiff$ extends to a homomorphism
          $$\pi_k(\Dec(M), (h, \tau(h)h))\to \KO^{-n-k}(\pt),$$
          that is non-trivial for all $k \geq 1$ for which $\KO^{-n-k}(\pt) \neq 0$. Moreover, when $\KO^{-n-k}(\pt) \cong \Ztwo$, then the homomorphism is split surjective.
\end{itemize}
\end{corollary}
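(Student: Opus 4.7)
The plan is to exploit Corollary~\ref{cor:obstr.lightlike.id-triple}: under the topological obstruction hypothesis, every $(g,K) \in \Dec(M)$ has an invertible Dirac--Witten operator, so the bounded transform $\ol F_{g,K} = \frac{\olDirac_{g,K}}{\sqrt{1+\olDirac_{g,K}^2}}$ lies in the space $G^{n,1}(\ol H)$ of invertible $\Cl_{n,1}$-Fredholm operators, not merely in $\Fred^{n,1}(\ol H)$. Concretely, the Schrödinger--Lichnerowicz formula \eqref{eq:SchrL} alone guarantees invertibility when the strict DEC holds; the obstruction hypothesis, combined with Theorem~\ref{theorem.kernel-gives-initial-data} and the structural results of Section~\ref{sec.kernel.DWo} (in particular Corollaries~\ref{cor:WdecInv-old} and~\ref{conj:WdecInv-lightlike}), rules out any nontrivial kernel of $\olDirac_{g,K}$ even in the degenerate DEC case.

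First I would upgrade the operator family map from a map of pairs $\bigl(\Ini(M), \Decstr(M)\bigr) \to \bigl(\Fred^{n,1}(\ol H), G^{n,1}(\ol H)\bigr)$ to a map of pairs $\bigl(\Ini(M), \Dec(M)\bigr) \to \bigl(\Fred^{n,1}(\ol H), G^{n,1}(\ol H)\bigr)$ by exactly the same construction as in Subsection~\ref{subsec.index.diff.idp}; continuity is unaffected, and the image of $\Dec(M)$ lies in $G^{n,1}(\ol H)$ by the preceding paragraph. Applying the boundary isomorphism and the $\KO$-index map from Theorem~\ref{Thm:IndexMap} then produces the desired homomorphism
\[
\oladiff \colon \pi_k\bigl(\Dec(M), (h,\tau(h)h)\bigr) \lto \KO^{-n-k}(\pt)
\]
for every $k \geq 0$. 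Because $\Decstr(M) \hookrightarrow \Dec(M)$ is an inclusion and the two operator family maps are literally the same map restricted to different subspaces, the resulting triangle
\[
\begin{tikzcd}
\pi_k\bigl(\Decstr(M)\bigr) \rar \ar[dr, "\oladiff"'] & \pi_k\bigl(\Dec(M)\bigr) \dar["\oladiff"] \\
& \KO^{-n-k}(\pt)
\end{tikzcd}
\]
commutes strictly, not merely up to sign.

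Both bullets of the corollary now follow formally from the corresponding statements for $\Decstr(M)$. For the first bullet, Theorem~\ref{thm:CompAdiff} gives $\oladiff(C_-) = \alpha(M) \neq 0$ while $\oladiff(C_+) = 0$ (as $C_+$ contains the basepoint); commutativity of the above triangle then forces $\oladiff(\tilde C_-) = \alpha(M) \neq 0 = \oladiff(\tilde C_+)$, so $\tilde C_+ \neq \tilde C_-$ in $\pi_0(\Dec(M))$. The consequence in the simply connected case is immediate from Stolz's theorem as in the $\Decstr$ statement. For the second bullet, the corollary preceding Corollary~\ref{cor:Wdec} provides, for each $k \geq 1$ with $\KO^{-n-k}(\pt) \neq 0$, a class in $\pi_k\bigl(\Decstr(M)\bigr)$ on which $\oladiff$ is nonzero (and split surjective onto $\Ztwo$ when applicable); pushing this class forward along the inclusion $\Decstr(M) \hookrightarrow \Dec(M)$ yields a class in $\pi_k\bigl(\Dec(M)\bigr)$ with the same $\oladiff$-image, and any chosen splitting descends along the same map.

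The only nonroutine step is verifying that no new subtlety arises at degenerate points of DEC: one must check that the continuity of $(g,K) \mapsto \ol F_{g,K}$ (set up in \cite{gloeckle:p2019} via generalized cylinders) extends without modification across the boundary $\Dec(M) \setminus \Decstr(M)$. This is however immediate because the construction depends only on the smooth dependence of $\olDirac_{g,K}$ on $(g,K) \in \Ini(M)$, not on invertibility; invertibility is only used to place the image in $G^{n,1}(\ol H)$, and is supplied globally on $\Dec(M)$ by the obstruction hypothesis.
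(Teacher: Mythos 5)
Your proposal is correct and takes essentially the same approach as the paper: both arguments reduce to the observation that $\oladiff$ only requires the image of the boundary class to land in the space of invertible Dirac--Witten operators, which Corollary~\ref{cor:obstr.lightlike.id-triple} guarantees for all of $\Dec(M)$ under the obstruction hypothesis. The paper phrases this via an explicit factorization through the subspace $\Ini^{\inv}(M)$ of initial data sets with invertible Dirac--Witten operator, which is operationally identical to your direct upgrade of the map of pairs.
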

\begin{proof}
  It should first be noted that the $\ol\alpha$-index difference factors as
     $$\pi_k\bigl(\Decstr(M), (h,L)\bigr) \lto \pi_k\bigl(\Ini^{\inv}(M), (h,L)\bigr) \lto \KO^{-n-k}(\pt).$$
Thereby, $\Ini^{\inv}(M)$ is meant to denote the subspace of initial data sets for which the Dirac-Witten operator is invertible and the first map is induced by the inclusion $\Decstr(M) \hookrightarrow \Ini^{\inv}(M)$.
The reason is that the only property of $\Decstr(M)$ that was needed to construct $\oladiff$ is that its elements possess an invertible Dirac-Witten operator.

Now, according to Corollary~\ref{cor:obstr.lightlike.id-triple} under the additional assumptions on $M$, $\Dec(M)$ includes into $\Ini^{\inv}(M)$.
Thus there is a factoriszation of inclutions $\Decstr(M) \hookrightarrow \Dec(M) \hookrightarrow \Ini^{\inv}(M)$ and the non-trivial elements of $\pi_k(\Decstr(M))$ detected by $\oladiff$ give rise to non-trivial elements of $\pi_k(\Dec(M))$.
\end{proof}

\subsection{Application to general relativity}\label{subsec.app.relativity}

From the perspective of physics, the most interesting part of Corollary \ref{cor:Wdec} is probably the one concerned with path-components of $\Dec(M)$.
Namely, it allows for an answer to the following (fairly vague) question:
When can a universe have both a big bang and a big crunch singularity?

More precisely, let $(\ol M, \ol g)$ be a globally hyperbolic Lorentzian manifold which is subject to the dominant energy condition, \ie $\Ein(V) \coloneqq \mathrm{Ric}(V)- \frac12 \scal V$ is past-causal for all future-causal $V \in T\ol M$.
We choose a foliation $\ol M \cong M \times \RR$ of $\ol M$ into spacelike hypersurfaces (diffeomorphic to $M$).
On every slice $M \times \{t\}$, $\ol g$ induces a metric $g_t$ and a second fundamental form $K_t$.
Hence, we obtain a family $(g_t, K_t)_{t \in \RR}$ of initial data sets on $M$.
As explained in Subsection~\ref{subsec.dominant.energy} the dominant energy condition for $(\ol M, \ol g)$ implies that on every slice the induced initial data set is subject to the dominant energy condition, \ie $(g_t, K_t) \in \Dec(M)$ for all $t \in \RR$.
A \emph{big bang} (with respect to the given foliation) may be characterized by demanding that in the limit $t \to -\infty$, the mean curvature $H_t = \frac1n \tr(K_t)$ of the initial data set $(g_t, K_t)$ uniformly converges to $\infty$.
Analogously, a \emph{big crunch} can be defined by $H_t \to -\infty$ for $t \to \infty$.
In particular, $H_t > 0$ holds for sufficiently small $t \in \RR$ in the case of a big bang and $H_t < 0$ for sufficiently large $t \in \RR$ in the case of a big crunch.
Note that an initial data set with $H_t > 0$ is in the component of expanding initial data $\tilde{C}_+$ and an initial data set with $H_t < 0$ is in the component of contracting initial data $\tilde{C}_-$.
Hence, if $(\ol M, \ol g)$ has both big bang and big crunch, then, for suitable choice of $t_- < t_+ \in \RR$, $[t_-, t_+] \to \Dec(M),\, t \mapsto (g_t, K_t)$ defines a continuous path starting in $\tilde{C}_-$ and ending in $\tilde{C}_+$, showing that $\tilde{C}_- = \tilde{C}_+$.
With this in mind, the first part of Corollary \ref{cor:Wdec} provides an obstruction to the existence of a space-time having both big bang and big crunch with Cauchy surface diffeomorphic to $M$.

This argument can be further improved.
To do so, we need the conservation theorem, which is proved in the book by Hawking and Ellis \cite{hawking.ellis:73}.

\begin{theorem}[{\cite[Sec.~4.3]{hawking.ellis:73}}]\label{thm.aus.h-w}
Let $(\ol M, \ol g)$ be a time-oriented Lorentzian manifold and assume it admits a temporal function, \ie a smooth function $\tau \colon \ol M \to \RR$ such that $\grad \tau$ is past-timelike.
Furthermore, let $U \subset \ol M$ be a compact submanifold with piecewise smooth boundary with $\dim U=\dim\ol M =n + 1$.
We assume that we have a piecewise smooth decomposition $\del U = \del_- U \cup \del_0 U \cup \del_+ U$ into the timelike boundary $\del_0 U$, the past non-timelike boundary $\del_- U$ and the future non-timelike boundary $\del_+ U$.
If $(\ol M, \ol g)$ satisfies the dominant energy condition and if the Einstein tensor $\Ein$ identically vanishes on $\del_- U$ and $\del_0 U$, then it identically vanishes on all of $U$.
In particular, if $(\ol M, \ol g)$ is globally hyperbolic, subject to the dominant energy condition and $\rho \equiv 0$ on a Cauchy surface, then it is a vacuum space-time.
\end{theorem}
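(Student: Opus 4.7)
The main tool is the contracted second Bianchi identity $\nabla^a \Ein_{ab}=0$ combined with DEC; the plan is a Gronwall-type integral estimate on the slabs of $U$ foliated by level sets of $\tau$.

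First I take the past-timelike vector field $V\definedas \grad\tau$ and form the vector field $P\definedas \Ein(V)^\sharp$. Because $V$ is past-timelike, DEC forces $P$ to be future-causal everywhere on $U$; and because $\Ein$ is divergence-free, the Bianchi identity yields
\[
\div P \,=\, \tfrac12\bigl\langle \Ein,\, \maL_V \olg\bigr\rangle,
\]
with $\langle\cdot,\cdot\rangle$ the natural pairing of symmetric $2$-tensors. Since $\maL_V \olg$ is bounded in an auxiliary Riemannian norm on the compact set $U$, the crucial algebraic input is the pointwise estimate
\[
|\div P|\,\leq\, C\cdot \Ein(V,V) \quad\text{on } U
\]
for some $C>0$. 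This rests on the fact that the DEC cone of symmetric $2$-tensors is convex and that for every strictly timelike $V$ the linear functional $T\mapsto T(V,V)$ is strictly positive on this cone away from $0$; compactness of $U$ together with $V$ being uniformly strictly past-timelike yields the uniform constant.

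With this in hand, set $U_t\definedas U\cap\{\tau\leq t\}$ and $\Sigma_t\definedas U\cap\tau^{-1}(t)$, and define
\[
E(t)\,\definedas\, -\int_{\Sigma_t}\olg(P,\nu)\, d\Sigma \,=\, \int_{\Sigma_t}\frac{\Ein(V,V)}{|V|}\, d\Sigma \,\geq\, 0,
\]
where $\nu=-V/|V|$ is the future-timelike outward unit normal on $\Sigma_t$ regarded as the top face of $U_t$. Applying the divergence theorem to $P$ on $U_t$ decomposes the boundary integral into four pieces: on $\Sigma_t$ it produces $E(t)$; the hypothesis $\Ein\equiv 0$ on $\del_- U\cup\del_0 U$ makes the contributions from $\del_- U\cap U_t$ and $\del_0 U\cap U_t$ vanish; and on $\del_+ U\cap U_t$ both the outward normal and $P$ are future-causal, so $\olg(P,\nu)\leq 0$, producing a contribution of favorable sign that only strengthens the estimate. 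Combined with the pointwise bound on $|\div P|$ and the coarea formula in $\tau$, one obtains
\[
E(t)\,\leq\, C'\int_{t_0}^t E(s)\, ds,\qquad t_0\definedas\inf_U\tau,\ \ E(t_0)=0.
\]
Gronwall's inequality then forces $E\equiv 0$, so $\Ein(V,V)\equiv 0$ on each $\Sigma_t$ and therefore on $U$. A final appeal to the same pointwise DEC bound $\|\Ein\|\leq C\cdot \Ein(V,V)$ upgrades this to $\Ein\equiv 0$ on $U$.

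The main obstacle I expect is the pointwise algebraic estimate $|\div P|\leq C\cdot \Ein(V,V)$: DEC does not make $\Ein$ a positive operator in any naive sense, so one has to exploit the full convexity of the DEC cone together with $V=\grad\tau$ being \emph{uniformly strictly} past-timelike on $U$, which follows from the hypothesis together with compactness of $U$. The ``in particular'' statement then follows: on the Cauchy surface $\Sigma$, $\rho\equiv 0$ combined with DEC gives $j\equiv 0$, hence $\Ein(e_0,\cdot)\equiv 0$; applying the same pointwise DEC bound with $V=e_0$ yields $\Ein\equiv 0$ on $\Sigma$. Global hyperbolicity lets us exhaust $\ol M$ by nested compact slabs $U$ whose past face $\del_- U$ is a piece of $\Sigma$ (and, by choosing Cauchy developments of compact pieces of $\Sigma$, with $\del_0 U=\emptyset$), so the conservation theorem on each slab propagates $\Ein\equiv 0$ throughout $\ol M$.
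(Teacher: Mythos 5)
The paper does not reproduce a proof of this result; it is imported from Hawking--Ellis Sec.~4.3 as cited, and your sketch is essentially the standard energy-estimate/Gronwall argument from that reference, and it is correct. In particular, the crucial algebraic bound $\|\Ein\|\le C\,\Ein(V,V)$ does hold on the DEC cone: polarizing $T(V\pm\epsilon W,V\pm\epsilon W)\ge 0$ shows that $T(V,V)=0$ with $V$ strictly timelike forces $T\equiv 0$, and compactness of the normalized cone together with compactness of $U$ (so that $\grad\tau$ is uniformly strictly timelike and $\maL_V\olg$ is uniformly bounded) supplies the constant. The flux signs work out with the $\epsilon=\olg(N,N)$ convention in the Lorentzian divergence theorem (making both the $\Sigma_t$ and $\partial_+U$ contributions nonnegative), the coarea factor $|V|^{-1}$ cancels exactly as you use it, and the only small omission is that propagating $\Ein\equiv 0$ to the \emph{past} of the Cauchy surface requires invoking the time-reversed version of the statement.
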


Here, a space-time is called a \emph{vacuum space-time} or we say that it is \emph{vacuous} if $\Ein\equiv 0$. \ifdraft For a proof of this theorem see also Appendix~\ref{app.hawking.ellis}.\fi

We use it to prove our following theorem. 
\begin{theorem}
Let $(\ol M, \ol g)$ be a globally hyperbolic Lorentzian manifold with compact Cauchy surface $M$.
Assume that $(\ol M, \ol g)$ satisfies the dominant energy condition and has a foliation into spacelike hypersurfaces with big bang as well as big crunch.
If $\ol M$ is spin, then $\alpha(M) = 0$ or the foliation of $\ol M$ contains a leaf for which there exists a non-zero spinor $\phi$ such that $\phi$ along with the induced initial data set $(g, K)$ satisfies the constraint equation \eqref{gen.imag.killing}.
\footnote{Assuming $M$ connected, we can reexpress this by saying that $(g,K,\phi)$ is an initial data triple.}
  In particular, if addtionally $(\ol M, \ol g)$ is non-vacuous and simply connected of dimension $n +1\geq 6$, then $M$ admits a metric of positive scalar curvature.
\end{theorem}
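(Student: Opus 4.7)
The plan is to prove the stated dichotomy by a connectivity argument in $\Dec(M)$, and then to derive the positive scalar curvature conclusion under the additional non-vacuity and simple-connectedness hypotheses. The central observation is that a big bang together with a big crunch provides a continuous path in $\Dec(M)$ joining the expanding to the contracting component; ruling out the existence of such a path, unless the Dirac--Witten kernel is nontrivial on some leaf, will yield the first statement, and the work of Section~\ref{sec.kernel.DWo} will yield the second.

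First I would make the connecting path explicit. The induced family $(g_t, K_t)_{t \in \RR}$ lies in $\Dec(M)$ by DEC, as recalled in Subsection~\ref{subsec.dominant.energy}. The big-bang assumption gives $H_{t_-} > 0$ for sufficiently negative $t_-$, so $(g_{t_-}, K_{t_-}) \in \tilde C_+$, and analogously $(g_{t_+}, K_{t_+}) \in \tilde C_-$ for sufficiently large $t_+$. The restriction of $t \mapsto (g_t, K_t)$ to $[t_-, t_+]$ is therefore a continuous path in $\Dec(M)$ from $\tilde C_+$ to $\tilde C_-$. Now suppose, for contradiction, that $\alpha(M) \neq 0$ and that $\olDirac_{g_t, K_t}$ is invertible for every $t$. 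Then the path lies entirely in $\Ini^{\inv}(M)$. As observed in the proof of Corollary~\ref{cor:Wdec}, the Lorentzian index difference $\oladiff$ factors through $\pi_0(\Ini^{\inv}(M))$, and Theorem~\ref{thm:CompAdiff} identifies its value on $C_-$ (with basepoint $C_+$) with $\alpha(M) \neq 0$. Hence $\tilde C_+$ and $\tilde C_-$ are distinct in $\pi_0(\Ini^{\inv}(M))$, contradicting the existence of the path. Consequently, either $\alpha(M) = 0$, or there is some leaf on which $\ker \olDirac \neq 0$; by Theorem~\ref{theorem.kernel-gives-initial-data}, any such kernel spinor $\phi$ satisfies $\ol\nabla \phi = 0$ and thus the constraint equation~\eqref{gen.imag.killing}, and being $\ol\nabla$-parallel it is either identically zero or nowhere zero on each connected component of the leaf.

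For the ``in particular'' part, I assume $(\ol M, \ol g)$ is non-vacuous and simply connected with $n+1 \geq 6$. Since $\ol M$ is diffeomorphic to $M \times \RR$ by the foliation, $M$ is simply connected, hence $b_1(M) = 0$, and $M$ inherits a spin structure from $\ol M$. If some leaf carried a spinor $\phi$ as in the dichotomy, Corollary~\ref{cor:WdecInv-old} would force $V_\phi$ to be timelike there, since the lightlike alternative would entail $b_1(M) \neq 0$. Then Corollary~\ref{cor:TlSpinorVac} would give $\rho \equiv 0 \equiv j$ on that leaf. Taking the leaves of the foliation to be Cauchy surfaces, as is standard in a globally hyperbolic space-time, the conservation theorem (Theorem~\ref{thm.aus.h-w}) then forces $(\ol M, \ol g)$ to be vacuous, contradicting our assumption. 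Hence the first part yields $\alpha(M) = 0$. Since $M$ is a closed simply-connected spin manifold of dimension $n \geq 5$ with vanishing $\alpha$-index, Stolz's theorem gives $\Psc(M) \neq \emptyset$.

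The main obstacle I anticipate is the factorization of $\oladiff$ through $\Ini^{\inv}(M)$ and the verification that the connecting path from the big-bang side to the big-crunch side genuinely stays in that subspace; once this is in place, the dichotomy is immediate, and the timelike versus lightlike analysis of $V_\phi$ together with the conservation theorem reduce the non-vacuum case to Stolz's theorem as outlined.
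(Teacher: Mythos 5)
Your argument follows the paper's proof in all essentials: the same path in $\Dec(M)$ produced by the foliation, the same use of Theorem~\ref{thm:CompAdiff} together with the factorization of $\oladiff$ through $\Ini^{\inv}(M)$ to force a non-invertible Dirac--Witten operator on some leaf when $\alpha(M)\neq 0$, the same appeal to Theorem~\ref{theorem.kernel-gives-initial-data} to convert a kernel element into a solution of \eqref{gen.imag.killing}, and the same chain (Corollary~\ref{cor:WdecInv-old} to exclude the lightlike case via $b_1(M)=0$, Corollary~\ref{cor:TlSpinorVac} to get $\rho\equiv 0$, the conservation theorem to get a vacuum space-time, and Stolz's theorem) for the ``in particular'' clause.

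One step is stated too loosely and, as written, does not close the contradiction. You infer that ``$\tilde C_+$ and $\tilde C_-$ are distinct in $\pi_0(\Ini^{\inv}(M))$, contradicting the existence of the path'' from the fact that $t\mapsto (g_t,K_t)$, $t\in[t_-,t_+]$, lies in $\Ini^{\inv}(M)$. But that path only joins $(g_{t_-},K_{t_-})$ to $(g_{t_+},K_{t_+})$, and your justification that these endpoints lie in $\tilde C_\pm$ is a statement about path components of $\Dec(M)$, not of $\Ini^{\inv}(M)$. Since $\Dec(M)\not\subseteq\Ini^{\inv}(M)$ in general --- that failure is precisely the second alternative of the dichotomy you are proving --- connectivity in $\Dec(M)$ does not transfer to connectivity in $\Ini^{\inv}(M)$, so you have not yet produced a path in $\Ini^{\inv}(M)$ joining a representative of $C_+$ to one of $C_-$. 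The fix is exactly the step the paper makes explicit: append the end segments $s\mapsto\bigl(g_{t_-},K_{t_-}+s\,g_{t_-}\bigr)$ and $s\mapsto\bigl(g_{t_+},K_{t_+}-s\,g_{t_+}\bigr)$, which lie in $\Decstr(M)\subseteq\Ini^{\inv}(M)$ for $s>0$ because $H_{t_-}>0$ and $H_{t_+}<0$, and at $s=0$ lie in $\Ini^{\inv}(M)$ by your contradiction hypothesis. The resulting path $\gamma$ on $[t_--1,t_++1]$ genuinely connects $C_+$ to $C_-$ inside $\Ini^{\inv}(M)$, yielding the contradiction, and it also localizes the non-invertible operator to a leaf of the given foliation, since the appended segments consist of invertible operators. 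With this insertion your proof coincides with the paper's.
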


\begin{proof}
As explained above, such a foliation gives rise to a map $\RR \to \Dec(M),\, t \mapsto (g_t, K_t)$ and it is possible to choose $t_- < t_+ \in \RR$ such that $H_{t_-} > 0$ and $H_{t_+} > 0$.
It is elementary to check that $(g_{t_-}, K_{t_-} + s g_{t_-})$ strictly satisfies the dominant energy condition for all $s > 0$.
Analogously, $(g_{t_+}, K_{t_+} - s g_{t_+})$ strictly satisfies the dominant energy condition for all $s > 0$.
Hence, by adding these paths at the ends of the restriction $[t_-, t_+] \to \Dec(M)$, we obtain a path $\gamma \colon [t_- -1, t_+ +1] \to \Dec(M)$ with endpoints in $\Decstr(M)$.
More precisely, the whole segment $\gamma([t_- -1, t_-))$ lies in the component $C_-$ and $\gamma((t_+, t_+ +1]) \subset C_+$.

We may assume $\alpha(M) \neq 0$, as otherwise we are done with the first claim.
Hence, by Theorem \ref{thm:CompAdiff} $\oladiff(C_-) = \alpha(M) \neq 0$.
Now recall from the definition, that $\oladiff$ may be computed as family index of the Dirac-Witten operators associated to any path in $\Ini(M)$ connecting $C_-$ with $C_+$.
Taking the path $\gamma$ from above, this implies that there is some $t_0 \in (t_- -1, t_+ +1)$ for which the Dirac-Witten operator associated to $\gamma(t_0)$ is not invertible.
As the end segments are contained in $\Decstr(M)$, this $t_0$ must be contained in $[t_-, t_+]$ and the first claim follows from Theorem \ref{theorem.kernel-gives-initial-data}.

For the remaining part, recall that due to Stolz's theorem, we may assume that~$M$ is spin with $\alpha(M) \neq 0$ as otherwise $M$ carries a metric of positive scalar curvature.
Hence, applying the first part for some foliation, we obtain a leaf $(M, g, K)$ along with a solution $\phi$ of the constraint equation \eqref{gen.imag.killing} for a parallel spinor.
As $M$ is simply connected, $b_1(M) = 0$ it follows from Corollary~\ref{cor:WdecInv-old} that $\phi$ cannot be lightlike.
Then by Corollary~\ref{cor:TlSpinorVac}, $\rho \equiv 0$ on $M$.
The conservation theorem then implies that $(\ol M, \ol g)$ is a vacuum space-time, contradicting the assumptions.
\end{proof}

It was pointed out to us by Greg Galloway that a more general version of this also follows from the following theorem due to Gerhardt \cite[Thm.~6.1]{gerhardt:83}.
\begin{theorem}[{\cite[Thm.~2.2]{gerhardt:00}}]
Let $(\ol M, \ol g)$ be a globally hyperbolic Lorentzian manifold with compact Cauchy surface.
Let $M_1$ and $M_2$ be two Cauchy surfaces forming the boundary of a region $\Omega$, $M_1$ lying in the past of $M_2$, and let $H_1$ and $H_2$ be their mean curvature functions, respectively.
Then for any function $f \in C^{0,\alpha}(\ol\Omega)$ with $H_2 \leq f_{|M_2}$ and $f_{|M_1} \leq H_1$, there exists a $C^{2,\alpha}$-Cauchy surface $M$ between $M_1$ and $M_2$ with mean curvature $H = f_{|M}$.
\end{theorem}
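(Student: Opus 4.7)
The plan is to cast the search for the Cauchy surface $M$ as the Dirichlet-free existence of a spacelike graph with prescribed mean curvature inside the compact region $\Omega$, and to attack it by the method of continuity together with a priori estimates. First, fix a time function $\tau$ on a neighborhood of $\ol\Omega$ whose level sets are Cauchy surfaces, normalized so that $\{\tau = 0\} = M_1$ and $\{\tau = 1\} = M_2$. Pulling back by the normal flow of $-\grad \tau/\|\grad\tau\|^2$, one obtains Gauss-type coordinates in which
\begin{equation*}
  \ol g = -N(\tau,x)^2\, d\tau^2 + g_\tau,
\end{equation*}
with $g_\tau$ a smooth family of Riemannian metrics on $M$. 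Any Cauchy surface between $M_1$ and $M_2$ arises, at least locally, as the graph $\{(u(x),x)\mid x\in M\}$ of a function $u\colon M\to [0,1]$; the spacelike condition translates into the strict inequality $N^{-2}|du|^2_{g_u} < 1$ and the (future-directed) mean curvature $H(u)$ becomes a quasilinear operator, elliptic precisely on the spacelike cone.

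Second, I would run the method of continuity on the family
\begin{equation*}
  H(u_s) = s\, f\bigl(u_s(x),x\bigr) + (1-s)\, H_0(x), \qquad s\in[0,1],
\end{equation*}
where $H_0$ is chosen so that $s=0$ admits an obvious solution (for instance, one of the leaves of the reference foliation). Openness of the solution set is the implicit function theorem applied to the linearization of $H$, a Jacobi-type elliptic operator that is invertible on $C^{2,\alpha}$ by standard Schauder theory once we stay in the uniformly spacelike regime. Closedness requires uniform $C^{2,\alpha}$ estimates. The $C^0$ estimate is exactly the barrier argument for which the hypotheses $H_2 \leq f|_{M_2}$ and $f|_{M_1}\leq H_1$ are designed: if the graph of $u_s$ touched $M_1$ from below (respectively $M_2$ from above), the comparison principle for the mean-curvature operator combined with the inequality on $f|_{M_i}$ would force a contradiction, so the whole family remains trapped between $M_1$ and $M_2$.

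Third, once $u_s$ is confined in $\Omega$, Schauder estimates for the quasilinear equation yield $C^{2,\alpha}$ bounds as soon as the gradient $|du_s|_{g_{u_s}}$ is uniformly bounded away from $N$; this is the classical gradient estimate and is the main obstacle in the argument. I would control it by differentiating the Lorentz factor $v = -\ol g(\nu,\del_\tau) \geq 1$ (with $\nu$ the future unit normal of the graph) and deriving an identity of the form
\begin{equation*}
  \Delta_{g} v = v\bigl(|A|^2 + \overline{\mathrm{Ric}}(\nu,\nu)\bigr) + \<\nabla v, \ldots\> + \text{terms involving }\nabla f,\, \nabla \del_\tau,
\end{equation*}
so that the maximum principle bounds $v$ on the interior by its values on $\del\Omega$, which in turn are controlled because $M_1$ and $M_2$ are fixed smooth spacelike hypersurfaces. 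Since $f$ is only assumed $C^{0,\alpha}$, one has to be careful to work first with a smooth approximation and pass to the limit at the end, recovering the solution in $C^{2,\alpha}$ by Hölder interior estimates; but once the gradient estimate is in hand the rest is standard quasilinear elliptic theory, giving the desired Cauchy surface $M$ with $H = f|_M$.
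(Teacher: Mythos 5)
The paper does not prove this theorem at all: it is imported verbatim from Gerhardt \cite[Thm.~2.2]{gerhardt:00} (and the paper even notes it was pointed out to the authors by Galloway). So there is no in-paper argument to which your sketch can be compared; what you have produced is a reconstruction of the kind of quasilinear-PDE argument Gerhardt himself uses, and it captures the correct high-level skeleton (graph reduction, method of continuity, $C^0$ via barriers, gradient estimate via a maximum principle for the Lorentz factor, Schauder for higher regularity). Nonetheless, several of the steps you take as routine are exactly where the real work in Gerhardt's paper lies, and as written they have gaps.

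The openness step is not a consequence of Schauder theory. Schauder estimates give you that the linearization is Fredholm of index zero on $C^{2,\alpha}(M)$, but invertibility requires the kernel of the Jacobi-type operator $\dot u\mapsto \Delta\dot u + v\bigl(|A|^2+\overline{\mathrm{Ric}}(\nu,\nu)\bigr)\dot u + \dots$ to be trivial, and this can fail. One usually restores invertibility by exploiting a favorable zeroth-order term coming from $\partial f/\partial\tau$, or by an $\varepsilon$-penalization $H(u)=f+\varepsilon(u-u_0)$ and passing $\varepsilon\to 0$, or by Leray--Schauder degree rather than the implicit function theorem. Since $f$ here is only $C^{0,\alpha}$ and no monotonicity in the time direction is assumed, the ``openness by IFT'' argument as you wrote it does not go through; this is a genuine gap.

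Two smaller issues: the spacelikeness inequality for the graph $\{\tau=u(x)\}$ in the gauge $\ol g = -N^2\,d\tau^2 + g_\tau$ is $N^2\,|du|^2_{g_u}<1$ (the induced metric is $(g_u)_{ij}-N^2 u_i u_j$), not $N^{-2}|du|^2<1$ as you wrote. And in the gradient estimate you assert that ``the maximum principle bounds $v$ on the interior by its values on $\del\Omega$,'' but since the Cauchy surface is closed the graph has no boundary; the bound must be an interior Bernstein-type bound coming from the structure of the inequality itself (aided by the compactness of $\ol\Omega$ and the fixed spacelike barriers $M_1,M_2$), not from boundary values. These are fixable, but as stated they would not survive a careful reading.
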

\begin{corollary}
Let $(\ol M, \ol g)$ be a globally hyperbolic Lorentzian manifold with compact Cauchy surface $M$.
Assume that $(\ol M, \ol g)$ satisfies the dominant energy condition and has a foliation with big bang as well as big crunch.
If $(\ol M, \ol g)$ is connected and non-vacuous, then $M$ admits a metric of positive scalar curvature.
\end{corollary}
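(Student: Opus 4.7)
The plan is to use Gerhardt's theorem to produce a maximal Cauchy surface in $\ol M$ and read off a metric of non-negative scalar curvature on $M$ from the Gauß-Codazzi constraints; the remaining upgrade to strict positivity is then forced by the conservation theorem together with the Kazdan-Warner trichotomy.

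First I would exploit the foliation assumption to invoke Gerhardt's theorem. Because $(\ol M,\ol g)$ admits a big bang, I can pick $t_- \ll 0$ so that the corresponding leaf $M_1$ has mean curvature function $H_1 > 0$ pointwise; analogously, the big crunch supplies $t_+ \gg 0$ with leaf $M_2$ satisfying $H_2 < 0$ pointwise. Now I apply Gerhardt's theorem to the region bounded by $M_1$ and $M_2$ with the function $f \equiv 0$ (which satisfies $H_2 \leq 0 = f|_{M_2}$ and $f|_{M_1} = 0 \leq H_1$). This produces a $C^{2,\alpha}$-Cauchy surface $M^* \cong M$ on which the mean curvature vanishes identically, i.e.\ a maximal Cauchy hypersurface.

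Next, let $(g,K)$ denote the initial data induced on $M^*$. Since $\tr K = n H \equiv 0$, the constraint equations \eqref{eq:CE} collapse to
\[
2\rho \;=\; \scal^g - \|K\|^2, \qquad j \;=\; \div K.
\]
The dominant energy condition gives $\rho \geq \|j\| \geq 0$, hence pointwise $\scal^g = 2\rho + \|K\|^2 \geq 0$. Thus $M$ carries a metric of non-negative scalar curvature.

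Finally, I would invoke the Kazdan-Warner trichotomy: a closed manifold admitting a metric with $\scal \geq 0$ either admits a metric with $\scal > 0$, or else every $\scal \geq 0$ metric it carries is Ricci-flat (and in particular scalar-flat). In the latter case we would have $\scal^g \equiv 0$, which via the pointwise inequality above forces $\|K\| \equiv 0$ and hence $\rho \equiv 0$ everywhere on $M^*$. The conservation theorem (Theorem \ref{thm.aus.h-w}) then implies that $(\ol M,\ol g)$ is a vacuum space-time, contradicting the non-vacuous hypothesis. Therefore $M$ admits a metric of positive scalar curvature.

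The only subtlety I anticipate is the regularity mismatch: Gerhardt's theorem yields only a $C^{2,\alpha}$-hypersurface, so the induced metric $g$ is $C^{2,\alpha}$ and $\scal^g$ merely $C^{0,\alpha}$. This is not a serious obstacle, as the constraint argument remains valid pointwise and one can approximate $g$ by a smooth metric in the $C^2$-topology without destroying the pointwise lower bound $\scal \geq 0$ nor the fact that $\scal$ is not identically zero (the latter being the contrapositive of the vacuum alternative), so that the smooth Kazdan-Warner theorem applies.
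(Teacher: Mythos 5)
Your proposal is correct and follows essentially the same route as the paper: apply Gerhardt's theorem with $f\equiv 0$ to produce a maximal Cauchy hypersurface, read off $\scal^g\geq 0$ from the constraint equations and DEC, rule out the scalar-flat case via the conservation theorem (it would force a vacuum space-time), and conclude with Kazdan--Warner. The paper handles the $C^{2,\alpha}$ regularity slightly differently — by noting the Kazdan--Warner conformal argument runs directly on $C^2$-metrics rather than by smoothing first — but this is a cosmetic difference, and your version also works.
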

\begin{proof}
  We apply Gerhardt's theorem for $M_1 = M_{t_-}$, $M_2 = M_{t_+}$ and $f \equiv 0$.
 Thus $M$ is a spacelike minimal hypersurface.
From the dominant energy condition, we get on $M$ that
\begin{align*}
	\scal \geq \scal - \|K\|^2 = \rho \geq 0.
\end{align*}
If $\scal \equiv 0$, then $\rho \equiv 0$ and $(\ol M, \ol g)$ is a vacuum space-time.
Hence, $\scal \geq 0$ with $\scal \not\equiv 0$, and it is well-known that this implies that there is a metric of positive sclar curvature on $M$.
Notice that this is true despite the reduced regularity assumptions.
For instance, the argument of Kazdan and Warner \cite{kazdan.warner:75} also applies for $C^2$-metrics.
\end{proof}

\subsection*{Concluding remark}
We obtain an obstruction from a big bang to a big crunch, similar to the one obtain via Gerhardt's theorem. In contrast to the approach via Gerhardt's theorem that uses minimal hypersurfaces, our proof uses index theory. This is a remarkable analogon to obstructions against positive scalar curvature metrics, \eg on a torus, where index theoretic methods compete with the minimal hypersurface method.

The analogy even goes further: in this article we have shown that the index theoretical methods can also be used to obtain informations about higher homotopy groups.

Our index theoretical method also has the advantage, that we can exclude with the same approach the evolution from the component of contracting initial data to the 
component of expanding initial data. It seems to us that this result cannot be obtained from Gerhardt's theorem.

\appendix
\newcommand{\Tay}[1]{\mathrm{TayDev}_{(#1)}^{k}}
\newcommand{\Taymap}[1]{\mathrm{\Tau}_{(#1)}^{k}}
\newcommand{\wTaymap}[1]{\witi{\mathrm{\Tau}}_{(#1)}^{k}}

\section{The Taylor development map for Ricci-flat metrics}\label{appendix.smoothness}
We develop a construction here, which might also be of interest, independent of our present article. The goal is to use the Taylor development of the metric in  order to associate to any closed Ricci-flat Riemannian manifold $(P,h)$ with finite fundamental group a covering map
$$\Taymap{P,h}:\maP_\O(P,h)\to \Tay{P,h}$$
from the orthnormal frame bundle of $(P,h)$ to some submanifold $\Tay{P,h}$  of some $\RR^r$ which behaves ``natural'' under isometries. 

Our construction relies on the following lemma.
\begin{lemma}
  Let $U$ be an open subset of $\RR^n$, and $f\colon U\to \RR^m$ an analytic function. Assume that for any $x,y\in U$, $x\neq y$ the Taylor series in $x$ and $y$ do not coincide. Then for every $x_0\in U$ there is a number $k$, such that
  $T_kf:=(f,df,d^2f,\ldots,d^k f):U_0\to \RR^N$ has an injective differential in $x_0$.
\end{lemma}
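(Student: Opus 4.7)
The plan is to first compute the differential $dT_kf(x_0)\colon \RR^n\to \RR^N$ explicitly. Since $d^jf(x)$ is a symmetric $j$-multilinear form depending analytically on $x$, differentiating in direction $v\in\RR^n$ produces $d^{j+1}f(x_0)(v,\cdot,\ldots,\cdot)$. Thus $v\in \ker dT_kf(x_0)$ if and only if $d^jf(x_0)(v,w_2,\ldots,w_j)=0$ for all $w_2,\ldots,w_j\in\RR^n$ and all $j\in\{1,\ldots,k+1\}$. We want to show that for sufficiently large $k$ this kernel is trivial.

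We argue by contradiction: suppose no such $k$ exists. Then for each $k\in\NN$ we can pick a unit vector $v_k\in S^{n-1}$ with $v_k\in \ker dT_kf(x_0)$. By compactness of $S^{n-1}$ we may pass to a subsequence converging to some unit vector $v_*$. Since the kernels are nested and closed, $v_*$ lies in $\ker dT_kf(x_0)$ for every $k$, which gives us $d^jf(x_0)(v_*,w_2,\ldots,w_j)=0$ for all $j\geq 1$ and all $w_2,\ldots,w_j$. In particular $v_*$ contracts trivially with every symmetric tensor $d^jf(x_0)$ in its first slot, hence in all slots by symmetry.

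The key analytic step is to upgrade this infinitesimal vanishing to genuine translation invariance in the direction $v_*$. Because $f$ is real analytic at $x_0$, there is a ball $B_r(x_0)\subseteq U$ on which $f$ agrees with its Taylor series at $x_0$. For $t\in\RR$ and $w\in\RR^n$ with $\|tv_*+w\|<r$, expanding and using multilinearity and symmetry of $d^jf(x_0)$ gives
\begin{equation*}
f(x_0+tv_*+w)=\sum_{j\geq 0}\frac{1}{j!}\sum_{i=0}^{j}\binom{j}{i}t^{\,j-i}\,d^jf(x_0)(\underbrace{v_*,\ldots,v_*}_{j-i},\underbrace{w,\ldots,w}_{i}).
\end{equation*}
All terms with $j-i\geq 1$ vanish by the property of $v_*$, so only $i=j$ survives, yielding $f(x_0+tv_*+w)=f(x_0+w)$ for all sufficiently small $t$ and $w$.

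Consequently, for any $w$ with $\|w\|$ small enough, the two distinct points $x_0+w$ and $x_0+w+tv_*$ lie in $U$ and satisfy $f(\cdot+w+tv_*)=f(\cdot+w)$ in a neighbourhood of the origin, so their Taylor developments at these two points agree term by term. This contradicts the standing hypothesis on $f$, and thus for every $x_0\in U$ some finite $k$ must already produce an injective $dT_kf(x_0)$. The only genuinely non-trivial step is the compactness-plus-limit argument in the second paragraph; everything else is formal manipulation of Taylor data.
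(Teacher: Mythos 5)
Your proof is correct and follows essentially the same route as the paper: both arguments show that $\bigcap_k\ker d_{x_0}T_kf=\{0\}$ by upgrading, via analyticity, the infinitesimal data ``$d^jf(x_0)$ annihilates $X$ in one slot for all $j$'' to translation invariance of $f$ (and hence of all its derivatives, i.e.\ of the Taylor jets) in the $X$-direction, contradicting the hypothesis; the paper restricts to the line $t\mapsto x_0+tX$ and repeats the one-variable vanishing-to-infinite-order argument for each $\partial^\alpha f$, while you equivalently expand the multivariate Taylor series of $f$ at $x_0$ and kill the mixed terms. The one small detour in your write-up is the compactness argument on $S^{n-1}$: since the kernels $\ker d_{x_0}T_kf$ form a decreasing chain of linear subspaces of $\RR^n$, they stabilize after finitely many steps, so triviality of the intersection already yields triviality of some $\ker d_{x_0}T_kf$ with no sequential compactness needed.
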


\begin{proof}
  Suppose $X\in \bigcap_{k = 0}^\infty \ker(d_{x_0}T_kf)$. Then the map $t\mapsto f(x_0+tX)-f(x_0)$ is an analytic map that vanishes of infinite order at $0$. It follows that $f$ is constant along $t\mapsto x_0+tX$.
  We can apply the same argument to
  $$t\mapsto \frac{\partial^{|\alpha|}f}{\partial x^\alpha}(x_0+tX)-\frac{\partial^{|\alpha|}f}{\partial x^\alpha}(x_0)$$
  for any multi-index $\alpha$. It thus follows, that all point along $t\mapsto x_0+tX$ have the same Taylor series. Thus $X=0$. 
\end{proof}

In the following, let $\maP_\O(P,h)$ be the principal bundle of $h$-orthonormal frames over the Riemannian manifold $(P,h)$,
and $\maP_{\GL}(P)$ the principal bundle of all frames over $P$. 
The universal Riemannian covering will be denoted by $(\witi P, \witi h) \to (P, h)$. 

\begin{construction}
  We assume that $(P,h)$ is a closed connected $m$-dimensional Ricci-flat Riemannian manifold with finite $\pi_1(P)$.  We associate to $(P,h)$ a natural number $k_0=k_0(P,h)$, and a submanifold $\Tay{P,h}$ of $\RR^r$  which depends on $k\geq k_0$ and where $r= (m^{k+3}-m^2)/(m-1)$. We also associate a covering map (in particular, it is a local diffeomorphism)
  $$\Taymap{P,h}: \maP_\O(P,h) \to \Tay{P,h}.$$ 
The map  $(P,h)\mapsto  (\Tay{P,h},\Taymap{P,h})$ is natural, smooth and almost injective in the following sense:
\begin{enumerate}
\item[Almost injectivity:] Two frames $E_0,E_1\in\maP_\O(P,h) $ with base points $p_0,p_1\in P$  are mapped to the same element in $\Tay{P,h}$, if and only if, there is an isometry from an open neighborhod of $p_0$ to an open neighborhood of $p_1$ whose differential maps $E_0$ to $E_1$. Note that any such local isometry lifts to a global isometry of $(\witi P, \witi h)$.
\item[Naturalilty:] If there is a (surjective) Riemannian covering $I\colon (P,h_P)\to (Q,h_Q)$, then $k_0(P,h_P)=k_0(Q,h_Q)$, and for $k\geq k_0$ we have $\Tay{P,h_P}=  \Tay{Q,h_Q}$
  and
  $\Taymap{P,h_P}=\Taymap{Q,h_Q}\circ \phi_*$, where $\phi_*$ is the map from
  $\maP_\O(P,h_P)\to\maP_\O(Q,h_Q)$ induced from $\phi$. 
\item[Smoothness:] If we have a smooth family $b\mapsto h_b$, $h\in B$ of such metrics on $P$, where $B$ is any parameter manifold, possibly with boundary, and if $k\geq k_0(P,h_b)$ for all $b\in B$ (which can be achieved \eg if $B$ is compact) then  $\Tay{P,h_b}$ and $\Taymap{P,h_b}$ depend smoothly on $b$. More precisely, $\bigcup_{b\in B} \left(\maP_\O(P,h_b)\times\{b\}\right)$ and $\bigcup_{b\in B} \left(\Tay{P,h_b}\times\{b\}\right)$ are  smooth submanifolds
  of $\maP_\GL(P)\times B$ and $\RR^r\times B$, and the maps $\Taymap{P,h_b}$ define a smooth map between these two submanifolds.    
\end{enumerate}
\end{construction}

Note that the Cheeger--Gromoll splitting theorem tells us that the compactness of $P$, its Ricci-flatness and the finiteness of $\pi_1(P)$ imply that $\Isom(\witi P,\witi h)$ and $\Isom(P,h)$ are finite.

\Description{of the construction}
Let $(P,h)$ satisfy the assumptions of the construction.
The metric $h$ is real-analytic in normal coordinates, thus the Taylor development converges.
  
   For each $x\in P$, and each $h$-orthonormal frame $E=(e_1,\ldots,e_m)$ we
   obtain  an isometry $I_E:\RR^m\to T_xP$, and let $\Taymap{P,h}(E)$ be the Taylor polynomial
   of order $k$ at $0$ of $I_E^*\left(\exp^{h}_x\right)^*h$. (We follow the convention that $\Taymap{P,h}$ also includes all coefficients of order $\leq k$.) 
   This Taylor polynomial $\Taymap{P,h}$ is a smooth map
   $$\Taymap{P,h}:\maP_\O(P,h)\lto \Bigl(\bigoplus_{j=0}^k(\RR^m)^{\otimes j}\Bigr)\otimes \RR^{m^2}=\RR^r$$
 for $r=m^2+m^3+\cdots +m^{k+2}=(m^{k+3}-m^2)/(m-1)$.  
   It is analytic with respect to the analytic structure on $\maP_\O(P,h)$ defined via $h$.

   From the previous lemma it follows, that for any $E\in \maP_\O(P,h)$ there is a $k_1(E)$, such that for all $k\geq k_1(E)$ the differential of  $\Taymap{P,h}$ in $E$ is injective. Thus, for such a $k$, the map  $\Taymap{P,h}$ is an immersion on a neighborhood of $E$. By an obvious compactness argument, we see that $k_1=k_1(E)$ can be chosen independently of $E$. 
   For $k\geq k_1$ we define $\Tay{P,h}$ as the image of  $\Taymap{P,h}$; we will show later that this set is a submanifold
   of $\RR^r$ for sufficiently large $k$.
   
   The Taylor polynomial of the metric is natural in the sense that if
   $\phi:(U,h_P)\to (V,h_Q)$ is an isometry defined on an open subset $U$ of a manifold $(P,h_P)$ as above, then $V$ is an open subset of a similar manifold $(Q,h_Q)$
   and
\begin{equation}\label{eq:nat.taymap}
  \Taymap{Q,h_Q}(\phi_*(E))= \Taymap{P,h_P}(E)\,\quad \forall E\in \maP_\O(U,h_P).
\end{equation}
When we apply this relation to $(Q,h_Q)=(P,h_P)=(P,h)$, then ``if''-part in the almost injectivity follows for any $k\in \NN$.
Such a locally defined isometry  lifts to an isometry of $\Isom(\witi P, \witi h)$, which is a discrete and thus finite group.
Note that there is a well-defined (smooth) covering $\pi_\Isom:\maP_\O(P,h)\to \Isom(\witi P,\witi h)\backslash\maP_\O(\witi P,\witi h)$. 
It follows for $k\geq k_1$ that the map $\Taymap{P,h}$ descends to an immersion 
$$\wTaymap{P,h}:\Isom(\witi P,\witi h)\backslash\maP_\O(\witi P,\witi h)\to \RR^r.$$
We will show that this map is injective for sufficiently large $k$. 
To show this assume that frames $E,E'\in \maP_\O(P,h)$ with base points $p,p'\in P$ are given, and we assume that for all $k\in \NN$, the Taylor series satisfy $\Taymap{P,h}(E)=\Taymap{P,h}(E')$. Then
   \begin{equation}\label{constr.isometry}
     \exp^{h}_{p'}\circ I_{E'}\circ (I_{E})^{-1}\circ \left(\exp^{h}_{p}\right)^{-1}
   \end{equation}
   defines a local isometry from a neighborhood of $p$ to a neighborhood of $p'$. Thus $\pi_\Isom(E)=\pi_\Isom(E')$.
   Thus for every $[E]\in \Isom(\witi P,\witi h)\backslash\maP_\O(\witi P,\witi h)$ there is a $k_0([E])\geq k_1$ such that
   $\left(\wTaymap{P,h}\right)^{-1}(\{\wTaymap{P,h}([E])\}=\{[e]\}$ for all $k\geq k_0([E])$. By a compactness argument this implies that
   $\wTaymap{P,h}$ is injective on a neighborhood of $[E]$, and a further compactness argument shows that $k_0=k_0([E])$ can be chosen independently on $[E]$. Then $\wTaymap{P,h}$ is an embedding,  $\Tay{P,h}=\image \wTaymap{P,h}$ a submanifold and the almost injectivity is proven.

  \jonathan{$\Taymap{P,h}$ is a covering map as it is the composition of the covering map $\pi_\Isom$ with a diffeomorphism.}
  Naturality follows from Equation~\eqref{eq:nat.taymap} and the smoothness is obvious from the construction.

\begin{proposition}
We assume that $P$ is a closed manifold with $\pi_1(P)$ finite, and that
$h_t$, $t\in (-\ep,\ep)$ is a smooth family of Ricci-flat metrics.
Assume that $h_t$, $t\in \RR$ is a smooth family of metrics, and $\phi_t\in \Diff(P)$ is continuous in $t$ with $\phi_0=\id_P$, and $\phi_t^*h_t=h_0$. Then $\phi_t$ is smooth in~$t$.
\end{proposition}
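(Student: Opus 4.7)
The plan is to reduce the proposition to the naturality and smoothness properties of the Taylor development map $\Taymap{P,\bullet}$ established in this appendix. After shrinking $(-\ep,\ep)$ to a compact subinterval around any fixed $t^*$ (which is legitimate since the assertion is local in $t$), the smoothness property yields an integer $k\in\NN$ with $k\geq k_0(P,h_t)$ for every $t$ in the shrunken interval, and consequently
\[ \maM := \bigcup_t \maP_\O(P,h_t)\times\{t\} \subseteq \maP_\GL(P)\times(-\ep,\ep), \qquad \maT := \bigcup_t \Tay{P,h_t}\times\{t\} \subseteq \RR^r\times(-\ep,\ep) \]
are smooth submanifolds, and the fiberwise Taylor maps assemble into a smooth map $T\colon\maM\lto\maT$ that is a covering in each $t$-slice. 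Fix once and for all a base point $p_0\in P$, an $h_0$-orthonormal frame $E_0$ at $p_0$, and set $y_0:=\Taymap{P,h_0}(E_0)\in\RR^r$.

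Since each $\phi_t\colon(P,h_0)\to(P,h_t)$ is an isometric diffeomorphism, naturality gives $\Taymap{P,h_t}(d\phi_t(E_0))=y_0$ for every $t$, so the smooth curve $t\mapsto(y_0,t)$ in $\maT$ admits the lift $t\mapsto(d\phi_t(E_0),t)$ to $\maM$. The next step is to verify that this lift is continuous. For this I invoke the isometry identity $\phi_t\circ\exp^{h_0}_{p_0}=\exp^{h_t}_{\phi_t(p_0)}\circ d\phi_t|_{p_0}$, which holds on a ball around $0\in T_{p_0}P$ whose radius can be chosen uniformly in $t$ by compactness, and which implies
\[ d\phi_t(v)\;=\;\bigl(\exp^{h_t}_{\phi_t(p_0)}\bigr)^{-1}\!\bigl(\phi_t(\exp^{h_0}_{p_0}(v))\bigr) \]
for small $v$. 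The right-hand side is continuous in $t$ because $\phi_t$, $h_t$ and the exponential maps all are, so $t\mapsto d\phi_t(E_0)$ is continuous. A continuous lift of a smooth curve through a smooth covering is unique and automatically smooth, hence $t\mapsto d\phi_t(E_0)$ is smooth; in particular the foot-point $\phi_t(p_0)$ and the linear map $d\phi_t|_{p_0}$ are smooth functions of $t$.

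Finally, I propagate this smoothness to all of $\phi_t$. The identity above expresses $\phi_t$ near $p_0$ as $\exp^{h_t}_{\phi_t(p_0)}\circ d\phi_t|_{p_0}\circ(\exp^{h_0}_{p_0})^{-1}$, which is manifestly a smooth function of $(t,x)$ on a neighborhood of $p_0$ once its $1$-jet at $p_0$ is. Running the same Taylor-lifting argument with an arbitrary base point $p_1\in P$ in place of $p_0$ (using continuity of $\phi_t$ at $p_1$ to reestablish continuity of the corresponding lifted frame) yields smoothness of $\phi_t$ in $t$ on a neighborhood of every point of $P$, and hence globally. The main technical obstacle is the smoothness-in-$t$ assertion for the combined Taylor map $T\colon\maM\to\maT$, which is precisely the smoothness property of the construction of $\Taymap{P,\bullet}$; once this is granted, the rest is a routine covering-space lifting argument combined with the local rigidity of isometries between Ricci-flat manifolds.
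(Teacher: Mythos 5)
Your proof is correct and follows essentially the same route as the paper's: both exploit the covering map assembled from the slice-wise Taylor development maps $\Taymap{P,h_t}$, lift the constant curve $t\mapsto y_0$, and use uniqueness of lifts. The only cosmetic difference is that you directly identify the smooth lift with $t\mapsto d\phi_t(E_0)$ by checking continuity via the exponential-map formula and then propagate smoothness to all of $P$ by repeating the argument at arbitrary base points, whereas the paper instead passes through isometries $\witi\phi_t$ of the universal cover constructed from the lifted frames and then argues by continuity that these cover $\phi_t$ --- the two formulations are equivalent.
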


Again, the conditions imply that  $\Isom(\witi P,\witi g_t)$ and $\Isom(P,g_t)$ are finite for all $t$.

\begin{proof}
We have a smooth map
$$\Psi:\bigcup_{t\in (-\epsilon,\epsilon)} (\maP_\O(P,h_t) \times \{t\}) \to \Tay{P,h_0} \times (-\epsilon, \epsilon) ,\quad (E,t)\mapsto  (\Taymap{P,h_t}(E), t)$$
This map is a covering of smooth manifolds. Thus if we fix some $E_0\in \maP_\O(P,h_0)$, lifting yields a (unique) smooth path of frames $t\mapsto E_t\in \maP_\O(P,h_t)$ with $\Psi(E_t, t) = (\Taymap{P,h_0}(E_0), t)$ for all $t$. By a construction similar to the construction in \eqref{constr.isometry}, the maps $E_0\mapsto E_t$ come from isometries $\witi\phi_t\colon(\witi P,\witi h_0)\to (\witi P,\witi h_t)$ which smoothly depend on $t$. One can assume $\witi\phi_0=\id_{\witi P}$. By a continuity argument one sees that $\witi\phi_t$ is a lift of $\phi_t$. Thus the smoothness of $\phi_t$ follows.
\end{proof}

\section{Proof of Lemma~\ref{lem.modified.diffeos}}\label{appendix.lemma.isom}
The goal of this appendix is to provide a proof of Lemma~\ref{lem.modified.diffeos}. We proceed with the assumptions and notations the proof of Proposition~\ref{prop:Qtilde.struc}, in the special case that the leaves $Q_\sigma$ are non-compact. Let $p:=n-\ell-1=\dim P_\sigma$.

If we join the tangent spaces to the submanifolds of the form $P_\sigma\times \{v\}$ in $\witi Q_\sigma$ over all $v\in \RR^\ell$ and all $\sigma$ then we obtain a smooth foliation $\FF_P$ of $\witi M$ where all leaves are diffeomorpic to some $P_\sigma$, and as the leaves in the neighborhood of a simply-connected, compact leaf are diffeomorphic to that leaf, all leaves are diffeomorphic. For a given $x=(x_0,0)\in P_0\times \RR^\ell\cong \witi Q_0$, $\witi\Phi_t(x)$ will intersect exactly one leaf of $\FF_P$, denoted as $P(t)$, as we may assume $P_0=P(0)=:P$. For $t\in \image S$ we have  $P(t)\subset \witi Q_0$ and thus $P(t)$ and $P(0)$ are isometric for the induced metrics, let $\iota_t$, $t\in \image S$ be an isometry from $P(0)$ to $P(t)$.

We show that $P(\sigma)$ is isometric to $P(0)$ for any $\sigma\in \RR$.
Choose an orthonormal frame $E:=(e_1,\ldots,e_p)$ of $T_xP$.
Because of the denseness of $\image S$ there is a sequence $t_i\in \image S$ with $t_i\to \sigma$.
After passing to a subsequence  $d\iota_{t_i}(E):=\bigl(d\iota_{t_i}(e_1),\ldots,d\iota_{t_i}(e_p)\bigr)$ is a Cauchy sequence, and for this subsequence $\iota_{t_i}$ converges to an isometry $\iota_\sigma:P(0)\to P(\sigma)$.

Let $\maP_\O(\FF_P)$ be the $\O(p)$-principal bundle of all orthonormal frames of any tangent space of leaves of $\FF_P$ at any point of $\witi M$.
For any frame $\witi E\in \maP_\O(\FF_P)$ over $x\in \witi Q_\sigma$, let $\maT(\witi E)$ be the Taylor polynomial of sufficient order of the induced Riemannian metric on $P(\sigma)$ in normal coordinates given by the basis $\witi E$. As all $P_\sigma$ are isometric for the induced metric, we obtain a submersion
$\maT: \maP_\O(\FF_P)\to  \Tay{P,h}$. 

We extend the map $P(t)$ from above to a map which defines a bijection from  $\RR^\ell\times \RR$ to the space of all leaves of $\FF_P$.
For any $t\in \RR$ we choose an orthonormal basis $(f_1(t)\ldots,f_\ell(t))$ of the $\RR^\ell$-factor of $\witi Q_t$ at $\Phi_t(x)$, depending smoothly on $t$. For  $v=(v_1,\ldots,v_\ell)\in \RR^\ell$ we define
  $$\rho(v,t):=\exp_{\Phi_t(x)}^{\witi Q_t,\witi g_t}\Bigl(\sum_{j=1}^\ell v_jf_j(t)\Bigr).$$
This is an embedding of $\RR^\ell\times \RR\to \witi M$ which maps  $\RR^\ell\times \{t\}$ isometrically to a totally geodesic subspace  of $\witi Q_t$. The image of $\rho$ intersects each leaf of $\FF_P$ exactly once (and this intersection is orthogonal with complemetary dimensions). Let $P(v,t)$ be the leaf of $\FF_P$ running through $\rho(v,t)$.
For each  $(v,t)$ we have a covering $\tau_{v,t}:\maP_\O(P(v,t))\to  \Tay{P,h}$,
and they fit togther to a covering 
\begin{eqnarray*}
   \maP_\O(\FF_P) & \to &   \Tay{P,h} \times \RR^\ell\times \RR \\\
   \witi E\in \maP_\O(P(v,t)) & \mapsto & (\tau_{v,t}(\witi E),v,t).
\end{eqnarray*}
For the frame $E$ fixed above we consider the map $(v,t)\mapsto  (\tau_{v,t}(E),v,t)$, and we lift this map uniquely to 
$H:\RR^\ell\times\RR\to\maP_\O(\FF_P)$ with $H(0)=E$. For each $(v,t)$ there is a unique isometry $h_{v,t}$ from $P$ to $P(v,t)$ mapping $E$ to  $H(v,t)$.
We get a diffeomorphism
\begin{eqnarray*}
  P\times \RR^\ell\times \RR & \to & \witi M\\
  (x,v,t)& \mapsto & \Xi_t(x,v):=h_{v,t}(x).
\end{eqnarray*}
By construction $\Xi_t$ maps $P\times \RR^\ell\times\{t\}$ isometrically to $\witi Q_t$.


\begin{thebibliography}{10}

\bibitem{ammann.kroencke.mueller:p19}
{\sc Ammann, B., Kr{\"o}ncke, K., and M{\"u}ller, O.}
\newblock Construction of initial data sets for {L}orentzian manifolds with
  lightlike parallel spinors.
\newblock \arxiv{1903.02064}, 2019.

\bibitem{ammann.kroencke.weiss.witt:19}
{\sc Ammann, B., Kr{\"o}ncke, K., Weiss, H., and Witt, F.}
\newblock Holonomy rigidity for {R}icci-flat metrics.
\newblock {\em Math. Z. 291}, 1-2 (2019), 303--311.

\bibitem{andersson.dahl.galloway.pollack:MR3878142}
{\sc Andersson, L., Dahl, M., Galloway, G., and Pollack, D.}
\newblock On the geometry and topology of initial data sets with horizons.
\newblock {\em Asian J. Math. 22}, 5 (2018), 863--881.

\bibitem{atiyah.singer:69}
{\sc Atiyah, M.~F., and Singer, I.~M.}
\newblock Index theory for skew-adjoint {F}redholm operators.
\newblock {\em Pub. Math. IH\'ES 37\/} (1969), 5--26.

\bibitem{baer.gauduchon.moroianu:05}
{\sc B{\"a}r, C., Gauduchon, P., and Moroianu, A.}
\newblock Generalized cylinders in semi-{R}iemannian and spin geometry.
\newblock {\em Math. Z. 249\/} (2005), 545--580.

\bibitem{BI}
{\sc Bartnik, R., and Isenberg, J.}
\newblock The constraint equations.
\newblock In {\em The {E}instein equations and the large scale behavior of
  gravitational fields}. Birkh\"auser, Basel, 2004, pp.~1--38.

\bibitem{baum:81}
{\sc Baum, H.}
\newblock {\em Spin-Strukturen und Dirac-Operatoren {\"u}ber pseudoriemannschen
  Mannigfaltigkeiten}.
\newblock Teubner, 1981.

\bibitem{baum.leistner.lecture.notes:HH}
{\sc Baum, H., and Leistner, T.}
\newblock {L}orentzian geometry --- holonomy, spinors, and {C}auchy problems.
\newblock In {\em Geometric Flows and the Geometry of Space-time}, V.~Cortes,
  K.~Kr{ö}ncke, and J.~Louis, Eds., Tutorials, Schools, and Workshops in the
  Mathematical Sciences. Birkhäuser, 2018.

\bibitem{baum.leistner.lischewski:16}
{\sc Baum, H., Leistner, T., and Lischewski, A.}
\newblock {C}auchy problems for {L}orentzian manifolds with special holonomy.
\newblock {\em Differential Geom. Appl. 45\/} (2016), 43--66.

\bibitem{botvinnik.ebert.randal-williams:14}
{\sc Botvinnik, B., Ebert, J., and Randal-Williams, O.}
\newblock Infinite loop spaces and positive scalar curvature.
\newblock {\em Invent. Math. 209}, 3 (2017), 749--835.

\bibitem{cai.galloway:MR1846368}
{\sc Cai, M., and Galloway, G.}
\newblock On the topology and area of higher-dimensional black holes.
\newblock {\em Class. Quant. Grav. 18}, 14 (2001), 2707--2718.

\bibitem{cheeger.gromoll:71}
{\sc Cheeger, J., and Gromoll, D.}
\newblock The splitting theorem for manifolds of nonnegative {R}icci curvature.
\newblock {\em J. Diff.\ Geom.\ 6\/} (1971/72), 119--128.

\bibitem{crowley.steimle.schick:18}
{\sc Crowley, D., Schick, T., and Steimle, W.}
\newblock Harmonic spinors and metrics of positive curvature via the {G}romoll
  filtration and {T}oda brackets.
\newblock {\em J.\ Topol. 11}, 4 (2018), 1077--1099.

\bibitem{dai.wang.wei:05}
{\sc Dai, X., Wang, X., and Wei, G.}
\newblock On the stability of {R}iemannian manifold with parallel spinors.
\newblock {\em Invent. Math. 161}, 1 (2005), 151--176.

\bibitem{ebert:13}
{\sc Ebert, J.}
\newblock The two definitions of the index difference.
\newblock {\em Trans.\ Amer.\ Math.\ Soc. 369}, 10 (2017), 7469--7507.

\bibitem{eichmair.galloway.mendes:2009.09527}
{\sc Eichmair, M., Galloway, G., and Mendes, A.}
\newblock Initial data rigidity results.
\newblock \arxiv{2009.09527}, to appear in Comm. Math. Phys., 2020 (arXiv).

\bibitem{fischer.wolf:75}
{\sc Fischer, A., and Wolf, J.}
\newblock The structure of compact {R}icci-flat {R}iemannian manifolds.
\newblock {\em J. Diff.\ Geom. 10\/} (1975), 277--288.

\bibitem{galloway:MR2411473}
{\sc Galloway, G.}
\newblock Rigidity of marginally trapped surfaces and the topology of black
  holes.
\newblock {\em Comm. Anal. Geom. 16}, 1 (2008), 217--229.

\bibitem{galloway:MR3768955}
{\sc Galloway, G.}
\newblock Rigidity of outermost {MOTS}: the initial data version.
\newblock {\em Gen. Relativity Gravitation 50}, 3 (2018), Paper No. 3.

\bibitem{galloway.schoen:MR2238889}
{\sc Galloway, G., and Schoen, R.}
\newblock A generalization of {H}awking's black hole topology theorem to higher
  dimensions.
\newblock {\em Comm. Math. Phys. 266}, 2 (2006), 571--576.

\bibitem{gerhardt:83}
{\sc Gerhardt, C.}
\newblock {$H$}-surfaces in {L}orentzian manifolds.
\newblock {\em Comm. Math. Phys. 89}, 4 (1983), 523--553.

\bibitem{gerhardt:00}
{\sc Gerhardt, C.}
\newblock Hypersurfaces of prescribed mean curvature in {L}orentzian manifolds.
\newblock {\em Math. Z. 235}, 1 (2000), 83--97.

\bibitem{gloeckle:p2019}
{\sc {Gl{\"o}ckle}, J.}
\newblock {Homotopy of the space of initial values satisfying the dominant
  energy condition strictly}.
\newblock \arxiv{1906.00099}, 2019.

\bibitem{gromov.lawson:80}
{\sc Gromov, M., and Lawson, M.}
\newblock The classification of simply connected manifolds of positive scalar
  curvature.
\newblock {\em Ann. Math. 111}, 3 (1980), 423--434.

\bibitem{hawking.ellis:73}
{\sc Hawking, S.~W., and Ellis, G. F.~R.}
\newblock {\em The large scale structure of space-time}.
\newblock Cambridge University Press, London-New York, 1973.
\newblock Cambridge Monographs on Mathematical Physics, No. 1.

\bibitem{hijazi.zhang:03}
{\sc Hijazi, O., and Zhang, X.}
\newblock The {D}irac-{W}itten operator on spacelike hypersurfaces.
\newblock {\em Comm. Anal. Geom. 11}, 4 (2003), 737--750.

\bibitem{hirsch:76}
{\sc Hirsch, M.~W.}
\newblock {\em Differential Topology}.
\newblock No.~33 in Graduate Texts in Mathematics. Springer-Verlag, 1976.

\bibitem{hitchin:74}
{\sc Hitchin, N.}
\newblock Harmonic spinors.
\newblock {\em Adv. Math. 14\/} (1974), 1--55.

\bibitem{joyce:book}
{\sc Joyce, D.}
\newblock {\em Riemannian holonomy groups and calibrated geometry}, vol.~12 of
  {\em Oxford Graduate Texts in Mathematics}.
\newblock Oxford University Press, Oxford, 2007.

\bibitem{kazdan.warner:75}
{\sc Kazdan, J.~L., and Warner, F.~W.}
\newblock {Scalar curvature and conformal deformation of Riemannian structure}.
\newblock {\em J. Diff. Geom. 10}, 1 (1975), 113 -- 134.

\bibitem{kroencke:15}
{\sc Kr\"{o}ncke, K.}
\newblock On infinitesimal {E}instein deformations.
\newblock {\em Diff.\ Geom.\ Appl. 38\/} (2015), 41--57.

\bibitem{lawson:77}
{\sc Lawson, H.~B.}
\newblock {\em The quantitative theory of foliations}.
\newblock Amer, Math. Soc., 1977.
\newblock CBMS Regional Conference 1975, No. 27.

\bibitem{lawson.michelsohn:89}
{\sc Lawson, H.~B., and Michelsohn, M.-L.}
\newblock {\em Spin Geometry}.
\newblock Princeton University Press, Princeton, 1989.

\bibitem{leistner.jdg2007}
{\sc Leistner, T.}
\newblock On the classification of {L}orentzian holonomy groups.
\newblock {\em J. Diff. Geom. 76}, 3 (2007), 423--484.

\bibitem{leistner.lischewski:15}
{\sc Leistner, T., and Lischewski, A.}
\newblock The ambient obstruction tensor and conformal holonomy.
\newblock {\em Pacific J. Math. 290\/} (2017), 403--436.

\bibitem{leistner.lischewski:19}
{\sc Leistner, T., and Lischewski, A.}
\newblock Hyperbolic {E}volution {E}quations, {L}orentzian {H}olonomy, and
  {R}iemannian {G}eneralised {K}illing {S}pinors.
\newblock {\em J. Geom. Anal. 29\/} (2019), 33--82.

\bibitem{lischewski:15-preprint}
{\sc Lischewski, A.}
\newblock The {C}auchy problem for parallel spinors as first-order symmetric
  hyperbolic system.
\newblock \arxiv{1503.04946}, 2015.

\bibitem{moore.schochet:06}
{\sc Moore, C.~C., and Schochet, C.~L.}
\newblock {\em Global analysis on foliated spaces}, second~ed., vol.~9 of {\em
  Mathematical Sciences Research Institute Publications}.
\newblock Cambridge University Press, New York, 2006.

\bibitem{oneill:83}
{\sc O'Neill, B.}
\newblock {\em Semi-{R}iemannian geometry}, vol.~103 of {\em Pure and Applied
  Mathematics}.
\newblock Academic Press, Inc. [Harcourt Brace Jovanovich, Publishers], New
  York, 1983.
\newblock With applications to relativity.

\bibitem{parker.taubes:82}
{\sc Parker, T., and Taubes, C.~H.}
\newblock {On Witten's proof of the positive energy theorem}.
\newblock {\em Comm. Math. Phys. 84\/} (1982), 223--238.

\bibitem{pfaeffle:00}
{\sc Pf{\"a}ffle, F.}
\newblock The {D}irac spectrum of {B}ieberbach manifolds.
\newblock {\em J. Geom. Phys. 35\/} (2000), 367--385.

\bibitem{sakai:96}
{\sc Sakai, T.}
\newblock {\em Riemannian geometry}, vol.~149 of {\em Translations of
  Mathematical Monographs}.
\newblock Amer.\ Math.\ Soc., 1996.

\bibitem{schick.wraith.2016}
{\sc Schick, T., and Wraith, D.}
\newblock Non-negative versus positive scalar curvature.
\newblock \arxiv{1607.00657}, 2016--2020.

\bibitem{seipel:19}
{\sc Seipel, J.}
\newblock {\em Cauchy problems on Lorentzian manifolds with parallel vector and
  spinor fields}.
\newblock Master thesis, Universit\"at Regensburg, 2019.

\bibitem{stolz:92}
{\sc Stolz, S.}
\newblock Simply connected manifolds of positive scalar curvature.
\newblock {\em Ann. Math. 136}, 3 (1992), 511--540.

\bibitem{wang_m:91}
{\sc Wang, M.~Y.}
\newblock Preserving parallel spinors under metric deformations.
\newblock {\em Indiana Univ. Math. J. 40\/} (1991), 815--844.

\bibitem{witten:81}
{\sc Witten, E.}
\newblock {A new proof of the positive energy theorem}.
\newblock {\em Comm. Math. Phys. 80\/} (1981), 381--402.

\bibitem{yau:71}
{\sc Yau, S.-T.}
\newblock On the fundamental group of compact manifolds of non-positive
  curvature.
\newblock {\em Ann. of Math. (2) 93\/} (1971), 579--585.

\end{thebibliography}

\end{document}